\numberwithin{equation}{section}
\newcommand{\dual}[3][]{\langle #2 , #3\rangle_{#1}}
\newcommand{\dualb}[3][]{\big\langle#2,#3\big\rangle_{#1}}
\newcommand{\dualB}[3][]{\Big\langle#2,#3\Big\rangle_{#1}}
\newcommand{\diff}[1]{\,\mathrm{d}#1}
\newcommand{\R}{\mathbb{R}}
\newcommand{\N}{\mathbb{N}}
\newcommand{\F}{\mathcal{F}}
\renewcommand{\P}{\mathbf{P}}
\newcommand{\E}{\mathbf{E}}
\renewcommand{\exp}[1]{\mathrm{e}^{#1}}
\newcommand{\exptext}[1]{\mathrm{exp}(#1)}
\newcommand{\exptextb}[1]{\mathrm{exp}\big(#1\big)}
\newcommand{\exptextB}[1]{\mathrm{exp}\Big(#1\Big)}
\DeclareMathOperator*{\argmin}{arg\,min}
\theoremstyle{plain}
\newtheorem{definition}{Definition}[section]
\newtheorem{theorem}[definition]{Theorem}
\newtheorem{lemma}[definition]{Lemma}
\newtheorem{assumption}{Assumption}
\theoremstyle{definition}
\newtheorem{remark}[definition]{Remark}
\begin{document}

\thanks{
  This work was partially supported by the Wallenberg AI, Autono\-mous Systems 
  and Software Program (WASP) funded by the Knut and Alice Wallenberg Foundation. 
  The computations were enabled by resources provided by the Swedish National 
  Infrastructure for Computing (SNIC) at LUNARC partially funded by the Swedish 
  Research Council through grant agreement no. 2018–05973.
}

\title[Tamed Euler]{Tamed Euler}
\title[Sub-linear convergence of TSGD in Hilbert space]{Sub-linear convergence of a 
tamed stochastic gradient descent method in Hilbert space}

\author[M.~Eisenmann]{Monika Eisenmann}
\email{monika.eisenmann@math.lth.se}

\author[T.~Stillfjord]{Tony Stillfjord}
\email{tony.stillfjord@math.lth.se}

\address{  Centre for Mathematical Sciences\\
  Lund University\\
  P.O.\ Box 118\\
  221 00 Lund, Sweden}

\keywords{stochastic optimization; tamed Euler scheme; convergence analysis; 
convergence rate; Hilbert space}
\subjclass[2010]{46N10; 65K10; 90C15}

\begin{abstract}
  In this paper, we introduce the tamed stochastic gradient descent method (TSGD) for
  optimization problems. Inspired by the tamed Euler scheme, which is a 
  commonly used method within the context of stochastic differential equations, TSGD
  is an explicit scheme that exhibits stability properties similar to those of implicit
  schemes. As its computational cost is essentially equivalent to that of the well-known
  stochastic gradient descent method (SGD), it constitutes a very competitive alternative
  to such methods.
    
  We rigorously prove (optimal) sub-linear convergence of the scheme for strongly convex
  objective functions on an abstract Hilbert space. The analysis only requires very mild 
  step size restrictions, which illustrates the good stability properties.
  The analysis is based on a priori estimates more frequently encountered in a time
  integration context than in optimization, and this alternative approach provides a
  different perspective also on the convergence of SGD. Finally, we demonstrate the
  usability of the scheme on a problem arising in a context of supervised learning.
  
\end{abstract}

\maketitle
\section{Introduction}\label{section:introduction}

We consider the gradient flow
\begin{align*}
  w' = - \nabla F(w) , \quad w(0) = w_1,
\end{align*}
on the interval $t \in [0, \infty)$ in order to approximate its steady 
state $w^*$ which satisfies $\nabla F(w^*) = 0$.
We are interested in this problem because for a suitable $F$ its solution solves the 
minimization problem
\begin{equation*}
  w^* = \argmin_{w} F(w).
\end{equation*}
Standard optimization methods may thereby be formulated as time-stepping methods for an evolution equation, which provides an alternative viewpoint on their behaviour and on how to analyze them.

We are mainly interested in the case where $F = \frac{1}{N} \sum_{i=1}^N f_i$ is a sum 
of many 
functions $f_i$ of the same type. This setting occurs in, e.g., supervised learning 
applications, where each $f_i$ corresponds to either a single data point or to a small 
subset (batch) of the data. In order to cover also the infinite data case, we assume more 
generally that
\begin{equation*}
  F(w) = \E_{\xi} \big[ f(\xi, w)\big],
\end{equation*}
where $\xi$ is a random variable and $\E_{\xi}$ denotes the corresponding expectation. Then a realization of $\xi$ corresponds to a specific batch.
In supervised learning applications, the amount of data is frequently very large, and computing the full gradient $\nabla F$ is not feasible. Instead, one typically applies \emph{stochastic} methods where instead of $\nabla F$ the gradient $\nabla f(\xi, \cdot)$ is used, see~\cite{BottouCurtisNocedal.2018} for a general overview.

A popular method is stochastic gradient descent (SGD), given by
\begin{equation*}
  w^{n+1} = w^n - \alpha_n \nabla f(\xi_n, w^n),  \quad w^1 = w_1,
\end{equation*}
where $\{\xi_n\}_{n \in \N}$ denotes a sequence of jointly independent random 
variables and $\{\alpha_n\}_{n \in \N}$ is a sequence of step sizes (learning rates). In 
essence, we apply
the standard gradient descent method but in each step only utilize a randomly chosen 
(small) part of $\nabla F$. More advanced methods such as Adam~\cite{KingmaBa.2014} exist as well, but most are still based on the underlying SGD idea.

Viewed as a time-stepping method, SGD is equivalent to an inexact version of the explicit 
(forward) Euler method and thereby suffers from the same stability issues. In particular, 
there is a severe limit on the step sizes $\alpha_n$, $n \in \N$, where the iterates quickly 
explode in 
size if it is violated. On the other hand, for optimal performance, we want to choose the 
step sizes as large as possible, and thus as close to this limit as possible. Since the limit depends on properties of $F$ that 
are not always known, this is difficult. 

Ideally, one would like to instead use an implicit scheme which is unconditionally stable. 
This would remove the step size restrictions altogether. 
In certain cases, such a method 
can be implemented very efficiently and is then the best choice.
See, 
e.g.~\cite{Bianchi.2016, DavisDrusvyatskiy.2019, ESW.2020, PatrascuNecoara.2017, 
RyuBoyd.2016, ToulisAiroldi.2015, ToulisAiroldi.2016, ToulisRennieAiroldi.2014}
 for analyses of this setting.
In general, however, it means that we have to solve an unfeasibly large system of nonlinear equations 
in each step.

The situation is similar for certain stochastic differential equations (SDEs), where it can be shown that the 
explicit (forward) Euler-Maruyama method diverges with probability one, compare 
\cite{HutzenthalerEtAll.2013}. 
The implicit (backward) Euler-Maruyama scheme is too expensive. In this context, the 
\emph{tamed Euler} scheme provides a fully 
explicit alternative, with better stability properties. 
This scheme was introduced for SDEs in \cite{HutzenthalerEtAll.2012} and has been studied 
further in, e.g., \cite{Sabanis.2013, Sabanis.2016}. 
Very recently, the taming idea has also been extended to a setting similar to ours 
involving stochastic gradient Langevin dynamics~\cite{LovasEtAll.2020}, which generalizes 
the deterministic setting from \cite{BrosseEtAll.2019, SabanisZhang.2019}.

We propose to use a method of this type also in the current context, which we call the tamed stochastic gradient descent (TSGD). It is defined by
\begin{equation*}
  w^{n+1} = w^n - \frac{\alpha_n \nabla f(\xi_n, w^n)}{1 + \alpha_n \|\nabla f(\xi_n, w^n)\|},  
  \quad w^1 = w_1.
\end{equation*}
We note that it is a fully explicit scheme. Further, as the step sizes or the gradients tend to zero, the method tends to the SGD. In fact, it is straightforward to show that TSGD is a second-order perturbation of SGD. However, due to the specific rescaling of the gradient, its stability properties are much better and large step sizes do not cause issues.

The main contribution of this paper is a rigorous error analysis of TSGD in a strongly 
convex setting, which demonstrates that it converges as $\mathcal{O}(\frac{1}{n})$. This 
is the optimal rate which can be expected in this stochastic setting. Notably, we require 
very weak or no bounds on the initial step size, and its size only affects the error 
constants in a mild manner. Another feature of our analysis is that we consider the 
problem in a (possibly) infinite-dimensional Hilbert space, which means that the error 
bounds are applicable not only to optimization of $\R^d$-valued data, but to, e.g.\ 
classification of functions. We also directly prove convergence of $\{w^n\}_{n \in \N}$ 
towards $w^*$ 
rather than of $\{F(w^n)\}_{n \in \N}$ towards $F(w^*)$. While these types of convergence 
are equivalent in the current setting, our approach 
provides better error constants for the first type of convergence than using this 
equivalence together with more standard arguments.

We refer to~\cite{BottouCurtisNocedal.2018} for a general overview of optimization 
methods for our problem setting. This includes a general proof of convergence for 
first-order explicit methods in which many similar methods fit. We note that verifying the 
required assumptions for the method suggested here is non-trivial. Furthermore, applying 
such a general result would not highlight the benefit of the scheme. We also note that our 
analysis is based on a different idea which relies on a priori estimates. The same ideas 
can be applied also to, e.g., SGD, which similarly shows convergence without a strict 
step size restriction. The limitation instead shows up in the error constant, which 
becomes infeasibly large. For the proposed method, the error constant is instead of a 
moderate size. Our analysis thus provides a different viewpoint on the convergence of 
these kinds of methods, which does not rely on prescribed step size limitations.

There are other related methods which might be useful in the given context, such as 
implicit-explicit schemes \cite{Bertsekas.2011, BianchiHachem.2016, PatrascuIrofti.2020, 
RyuYin.2019,SalimBianchiHachem.2019}, where only part of the problem is considered in 
an implicit way, and sum-splitting methods 
\cite{RyuYin.2019, ToulisTranAiroldi.2016, TranToulisAiroldi.2015} where the problem is 
decomposed into many small subproblems and each is considered in an implicit way. 
Both of these approaches rely on there being such easily identifiable splittings, which is 
typically not the case in the general setting. More closely related to our proposed method 
are the stabilized Runge-Kutta schemes proposed in 
\cite{AbdulleMedovikov.2001, vanDerHouwenSommeijer.1980} for parabolic problems 
rather than optimization. See also e.g.~\cite{HairerWanner.2010,Verwer.1996} and 
\cite[Section V]{HundsdorferVerwer.2003} for an overview. Recently, they were adapted to 
solve a special class of deterministic optimization problems in~\cite{EftekhariEtal.2021}. 

While our proofs of convergence require rather strong assumptions, such as strong 
convexity, we hasten to add that the method performs well also in more general settings, 
such as that of general neural networks. This is demonstrated by our numerical 
experiments in Section~\ref{section:experiments}. It is therefore likely that our 
assumptions can be much weakened while still guaranteeing, e.g., local convergence to a 
local minimum. Such considerations would, however, add a considerable amount of 
technical details that would obscure the general idea, and we thus choose to limit 
ourselves to this setting.

The paper is organized as follows. In Section~\ref{section:preliminaries} we fix the 
notation and state the basic assumptions on the optimization problem. Then we formally 
introduce the method in Section~\ref{section:TSGD}. As stated above, our main proof 
relies on a priori estimates, and we prove these in Section~\ref{section:apriori}. These are 
then used in the main error analysis in Section~\ref{section:erroranalysis}. In 
Section~\ref{section:experiments}, we provide several numerical experiments that 
illustrate our claims, both in a setting satisfying our basic assumptions and in a more 
general setting. Section~\ref{section:conclusions} summarises our conclusions. Finally, 
we collect some generally applicable results that are critical for our analysis, but whose 
proofs are overly technical and do not contribute to an understanding of the main ideas in 
Appendix~\ref{section:auxiliary}. 

\section{Preliminaries}\label{section:preliminaries}

In the following, let $(H, \dual{\cdot}{\cdot}, \|\cdot \|)$ be a real Hilbert space. Its 
dual space is denoted
by $(H^*, \dual[H^*]{\cdot}{\cdot}, \| \cdot \|_{H^*} )$. 
Since $H$ is a Hilbert space, there exists an isometric isomorphism $\iota \colon 
H^* \to H$ such that $\iota^{-1} \colon H \to H^*$ with $\iota^{-1}: v \mapsto 
\dual{v}{\cdot}$. 
We denote by $\N$ the natural numbers, not including $0$.

Let $(\Omega, \F, \P )$ be a complete probability space and let $\{\xi_n\}_{n \in \N}$ be 
a family of jointly independent random variables on $\Omega$.
For a random variable $X \colon \Omega \to H$, let $\E_{\xi}[X]$ denote the 
expectation with respect to the probability distribution of $\xi$. 
We are mainly interested in the total expectation
\begin{equation*}
  \E_n \big[ \|X\|^2 \big] = \E_{\xi_1}\big[ \E_{\xi_2}\big[ \cdots 
  \E_{\xi_n} \big[ \|X \|^2 \big] \cdots \big]\big].
\end{equation*}
Since the random variables $\{\xi_n\}_{n \in \N}$ are jointly independent, this expectation 
coincides with the expectation with respect to the joint probability distribution of $\xi_1, 
\ldots, \xi_n$. We also note here that if one of the following statements does not involve an expectation but does contain a random variable, then it is assumed to hold almost surely (a.s.) even if this is not explicitly stated.

For a random variable $\xi$ on $\Omega$, we consider the function $f(\xi, \cdot ) \colon 
\Omega \times H \to \R$ such that
\begin{equation*}
  F(w) = \E_{\xi} \big[ f(\xi, w)\big],
\end{equation*}
and aim to find
\begin{equation*}
  w^* = \argmin_{w} F(w).
\end{equation*}
The  existence of such a minimum will be guaranteed by a strong convexity assumption below. We note that this means that $\nabla F(w^*) = 0$.

Below, we collect all the assumptions that will be used throughout the paper. Each lemma 
and theorem specifies which particular assumptions are in effect at that point. The first 
assumption concerns the 
properties of the functions $f(\xi, \cdot)$, which will be used as stochastic approximations 
to $F$. 

\begin{assumption} \label{ass:fStoch}
  Let $f(\xi, \cdot ) \colon \Omega \times H \to \R$ be given such that
  \begin{itemize}
    \item $\dual{\iota \nabla f (v)}{w} = \lim_{h \to 0} \frac{f(v + hw) - f(v)}{h}$ a.s. 
      for all $v,w \in H$, i.e., $f$ is G\^{a}teaux differentiable a.s.;
    \item there exists 
      $\mu_{\xi} \colon \Omega \to [0,\infty)$ with $\E_{\xi} [\mu_{\xi}] =: \mu \in (0,\infty)$ 
      such that
      \begin{equation*}
        \dual{\iota \nabla f(\xi,v) - \iota \nabla f(\xi,w)}{v - w} \geq \mu_{\xi} \|v - w\|^2 \quad 
        \text{a.s. for all } v,w \in H;
      \end{equation*}
    \item there exists 
    $L_{\xi} \colon \Omega \to [0,\infty)$ with $\big(\E_{\xi}\big[ L_{\xi}^2 
    \big]\big)^{\frac{1}{2}} =: L \in (0,\infty)$ such that
    \begin{equation*}
      \| \iota \nabla f(\xi, v) - \iota \nabla f(\xi, w) \| \leq L_{\xi} \| v - w\| \quad 
      \text{a.s. for all } v,w \in H;
    \end{equation*}
    \item $\big(\E_{\xi} \big[\| \iota \nabla f(\xi,w^*) \|^2 \big]\big)^{\frac{1}{2}} =: 
    \sigma \in [0,\infty)$.
  \end{itemize}
\end{assumption}

The above assumption is enough to prove convergence with a sub-optimal rate and the 
optimal rate in some cases. To guarantee the optimal rate in all cases, we additionlly 
make the following assumption on certain higher moments.

\begin{assumption}\label{ass:fHigherIntegrability}
  Let $f$ be given such that Assumption~\ref{ass:fStoch} is fulfilled. Further, assume that 
  for all $v,w \in H$
  \begin{itemize}
    \item $\dual{\iota \nabla f(\xi,v) - \iota \nabla f(\xi,w)}{v - w} \geq \mu_{\xi} \|v - w\|^2$ 
    with $\big(\E_{\xi} \big[\| \mu_{\xi}^2 \big]\big)^{\frac{1}{2}} =: 
    \mu_2 \in (0,\infty)$.
    \item $ \| \iota \nabla f(\xi, v) - \iota \nabla f(\xi, w) \| \leq L_{\xi} \| v - w\|$ with 
    $\big(\E_{\xi}\big[ L_{\xi}^4 \big]\big)^{\frac{1}{4}} =: L_4 \in (0,\infty)$;
    \item $\big(\E_{\xi} \big[\| \iota \nabla f(\xi,w^*) \|^4 \big]\big)^{\frac{1}{4}} =: 
    \sigma_4 \in [0,\infty)$.
  \end{itemize}
\end{assumption}

Finally, in the case that the gradient is also globally bounded, the convergence result can 
be further improved. For technical reasons we also need to ensure that at points away 
from the minimum, the stochastic gradients are not significantly smaller than they are at 
the minimum of $F$. This is the content of the next assumption.
\begin{assumption}\label{ass:fBounded}
  Let $f$ be given such that Assumption~\ref{ass:fStoch} is fulfilled, and such that
  there exists $B \in (0,\infty)$ with $\| \iota \nabla f(\xi,w)\| \leq B$ a.s.\ for all $w \in H$. 
  Further, there exists $D \in [0,\infty)$ such that 
  \begin{align*}
    \Big(\E_{\xi} \Big[ \chi_{\|\iota \nabla f(\xi,w)\| >0} \frac{\|\iota \nabla f(\xi,w^*)\|^2 }{\|\iota 
    \nabla f(\xi,w)\|^2} \Big]\Big)^{\frac{1}{2}} \leq D
  \end{align*}
  is fulfilled for all $w \in H$.
\end{assumption}

As shown in the auxiliary Lemma~\ref{lem:swap_diff_and_expectation}, 
Assumption~\ref{ass:fStoch} means that $F$ is also G\^{a}teaux differentiable and 
$\nabla F = \E_{\xi}\big[ \nabla f(\xi, \cdot)\big]$. The following lemma summarises a few 
further consequences of the above assumptions.

 \begin{lemma} \label{lem:Fconsequences}
  Let Assumption~\ref{ass:fStoch} be fulfilled. 
  Then $F$ is strongly convex with convexity constant $\mu$ and $\nabla F$ is Lipschitz 
  continuous with Lipschitz constant $L_F \le L$, i.e.\ for all $v,w \in H$ it holds that
  \begin{align*}
    \| \nabla F(v) - \nabla F(w) \|_{H^*} 
    &= \| \iota \nabla F(v) - \iota \nabla F(w) \| 
    \leq L_F \| v - w\| 
    \leq L \| v - w\| \quad \text{and} \\
    F(v) &\geq F(w) + \dual{\iota \nabla F(w) }{ v - w }+ \frac{\mu}{2} \|v - w\|^2.
  \end{align*}
  Further, the first inequality implies that
  \begin{align*}
    F(v) 
    &\leq F(w)+\dual{\iota \nabla F(w) }{ v - w }+ \frac{L}{2} \|v - w\|^2.
  \end{align*}
  Finally, there exists a unique $w^* \in H$ such that $F(w^*) = \min_{w \in H} F(w)$.
\end{lemma}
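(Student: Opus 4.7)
The plan is to reduce each of the four assertions to a standard property of convex, Lipschitz-differentiable functions, starting from the identity $\nabla F = \E_{\xi}[\nabla f(\xi,\cdot)]$ provided by Lemma~\ref{lem:swap_diff_and_expectation}.

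First, I would transfer Assumption~\ref{ass:fStoch} from the stochastic integrands to $F$ itself. Taking expectations in the pointwise monotonicity inequality yields $\dual{\iota\nabla F(v)-\iota\nabla F(w)}{v-w}\ge \mu\|v-w\|^2$. For the Lipschitz bound, the triangle inequality for the Bochner expectation followed by the Cauchy--Schwarz inequality gives
\begin{align*}
\|\iota\nabla F(v)-\iota\nabla F(w)\| \le \E_{\xi}\big[\|\iota\nabla f(\xi,v)-\iota\nabla f(\xi,w)\|\big] \le \E_{\xi}[L_{\xi}]\|v-w\| \le L\|v-w\|,
\end{align*}
so that $L_F := \E_{\xi}[L_{\xi}]$ satisfies $L_F \le (\E_{\xi}[L_{\xi}^2])^{1/2}=L$.

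Second, I would derive both quadratic bounds on $F$ from the fundamental theorem of calculus. Setting $g(t) := F(w+t(v-w))$ for $t\in[0,1]$, the G\^{a}teaux differentiability of $F$ gives $g'(t)=\dual{\iota\nabla F(w+t(v-w))}{v-w}$, hence
\begin{align*}
F(v)-F(w)-\dual{\iota\nabla F(w)}{v-w} = \int_0^1 \dual{\iota\nabla F(w+t(v-w))-\iota\nabla F(w)}{v-w}\diff{t}.
\end{align*}
Substituting the strong monotonicity lower bound (which gives integrand $\ge \mu t\|v-w\|^2$) produces the strong convexity inequality with constant $\mu$, while Cauchy--Schwarz together with the Lipschitz estimate (giving integrand $\le Lt\|v-w\|^2$) produces the upper quadratic bound with constant $L$.

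Finally, uniqueness of a minimizer is immediate from strong convexity. For existence, strong convexity applied at any fixed base point forces $F(w)\to\infty$ as $\|w\|\to\infty$, so every minimizing sequence is bounded. Convexity combined with continuity (inherited from the Lipschitz gradient) yields weak lower semicontinuity of $F$ via Mazur's theorem; since $H$ is a Hilbert space and hence reflexive, a bounded minimizing sequence admits a weakly convergent subsequence, and its weak limit attains the infimum, producing $w^*$. The one genuinely delicate point is this last step, where reflexivity of $H$ is used in an essential way; everything else is routine once Lemma~\ref{lem:swap_diff_and_expectation} is in hand.
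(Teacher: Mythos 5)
Your proposal is correct, and its overall decomposition matches the paper's: take expectations to transfer strong monotonicity and Lipschitz continuity from $f(\xi,\cdot)$ to $F$, deduce the two quadratic inequalities, and then settle existence and uniqueness of $w^*$. The difference is in how the latter two steps are justified. The paper delegates them to the literature: it invokes \cite[Proposition~25.10]{ZeidlerIIB.1990} to pass from monotonicity of $v\mapsto \nabla F(v)-\mu\iota^{-1}v$ to convexity of $v\mapsto F(v)-\frac{\mu}{2}\|v\|^2$, cites \cite[Appendix~B]{BottouCurtisNocedal.2018} for the descent-type upper bound, and cites Zeidler again (Theorem~25.D, Proposition~25.20, Corollary~25.15) for the existence of a unique minimizer of a coercive, convex, G\^{a}teaux differentiable functional. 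You instead prove both quadratic bounds in one stroke from the fundamental theorem of calculus applied to $g(t)=F(w+t(v-w))$ — which is legitimate here because the Lipschitz continuity of $\nabla F$ makes $g'$ continuous — and you reprove the existence result by the direct method (coercivity, reflexivity of $H$, weak lower semicontinuity of a continuous convex function). What your route buys is self-containedness and an explicit Lipschitz constant $L_F=\E_{\xi}[L_{\xi}]\le(\E_{\xi}[L_{\xi}^2])^{1/2}=L$; what it costs is re-deriving standard facts the paper is content to cite. One small simplification available to you: once the strong convexity inequality is established, weak lower semicontinuity follows immediately from $F(w_n)\ge F(w)+\dual{\iota\nabla F(w)}{w_n-w}$ along a weakly convergent minimizing sequence, with no need to invoke Mazur's theorem.
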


\begin{proof}
  Using $\nabla F(w) = \E_{\xi} \big[ \nabla f(\xi, w)\big]$, we obtain
  \begin{align*}
    \dual{\iota \nabla F(v) - \iota \nabla F(w)}{v - w} \geq \mu \|v - w\|^2 \quad 
    \text{for all } v,w \in H.
  \end{align*}
  Thus, the function $v\mapsto \nabla F(v) - \mu \iota^{-1} v$ is monotone. Applying 
  \cite[Proposition~25.10]{ZeidlerIIB.1990}, it follows that $v \mapsto F(v) - \frac{\mu}{2} 
  \|v\|^2$ is convex such that $F$ is strongly convex and the variational inequality stated 
  in the lemma is fulfilled. The Lipschitz continuity of $\iota \nabla F$ 
  similarly follows from the Lipschitz continuity of $\iota \nabla f(\xi, \cdot)$ by the 
  identification provided in Lemma~\ref{lem:swap_diff_and_expectation}. The final 
  inequality follows by expanding $F$ in a zeroth-order Taylor expansion 
  around $w$ and using the Lipschitz continuity. See e.g.\ 
  \cite[Appendix~B]{BottouCurtisNocedal.2018} for more details.
  We note that $F$ is coercive since it is strongly convex. Combined with the G\^{a}teaux 
  differentiability, this guarantees the existence of a unique global 
  minimum, see e.g.~\cite[Theorem 25.D, Proposition~25.20 and 
  Corollary~25.15]{ZeidlerIIB.1990}.
\end{proof}

\section{The stochastic tamed Euler scheme} \label{section:TSGD}
Throughout the paper, we will assume that $\{\xi_n\}_{n \in \N}$ is a given a family of 
jointly independent random variables and we will abbreviate $f_n(w) = f(\xi_n,w)$ for $n 
\in \N$. The $\xi_n$ typically correspond to what batches we choose in each iteration, 
i.e.\ on which part of the data we evaluate the gradient.
Let $\{\alpha_n\}_{n \in \N}$ be a sequence of of positive real numbers.
We then consider the stochastic tamed Euler scheme
\begin{align}\label{eq:stochTamedEuler}
  w^{n+1} = w^n - \frac{\alpha_n \iota \nabla f_n(w^n)}{1 + \alpha_n \|\iota 
  \nabla f_n(w^n)\|} \quad \text{for }n \in \N, 
  \quad w^1 = w_1.
\end{align}
Note that it is also possible to choose a random initial value $w^1$, and our convergence statements can be 
extended to this setting in a straightforward way. For simplicity, we restrict ourselves to a fixed initial 
value $w_1 \in H$ in the following.

We note that the computational effort of the scheme is essentially the same as that of 
SGD, since once $\nabla f_n(w^n)$ has been found it is cheap to compute its norm.
We also note that TSGD can be interpreted as a second order perturbation of SGD, since
\begin{equation*}
  \frac{\alpha_n \iota \nabla f_n(w^n)}{1 + \alpha_n \|\iota 
  \nabla f_n(w^n)\|} = \alpha_n \iota \nabla f_n(w^n) - \frac{\alpha_n^2 \|\nabla 
  f_n(w^n)\|\iota \nabla f_n(w^n)}{1 + \alpha_n \|\iota \nabla f_n(w^n)\|}.
\end{equation*}
This second order perturbation mainly offers advantages if $\alpha_n \|\iota \nabla 
f_n(w^n)\|$ is large. In this case we make use of the fact that
\begin{align*}
  \frac{1}{2}\min\big\{1, \alpha_n \|\iota \nabla f_n(w^n)\|\big\}
  \leq \frac{\alpha_n \|\iota \nabla f_n(w^n)\|}{1 + \alpha_n \|\iota \nabla f_n(w^n)\|}
  \leq \min\big\{1, \alpha_n \|\iota \nabla f_n(w^n)\|\big\}.
\end{align*}
Thus, the growth of $w^{n+1} - w^n = \frac{-\alpha_n \iota \nabla f_n(w^n)}{1 + \alpha_n 
\|\iota \nabla f_n(w^n)\|}$ is always bounded.

\section{A priori bounds}\label{section:apriori}

Our main results will show that $\E_n \big[\|w^{n+1} - w^*\|^2\big]$ tends to zero as 
$\frac{1}{n}$ in the strongly convex case. In this section, we prepare for the proofs of 
this by first showing that the errors are bounded.
We note that the main argument here only requires convexity rather than strong convexity, and the $w^*$ in the following three lemmas could therefore equally well be any $w^* \in H$ that satifies $\nabla F(w^*) = 0$.

\begin{lemma} \label{lem:apriori2}
  Let Assumption~\ref{ass:fStoch} be fulfilled and let $\{\alpha_n\}_{n \in \N}$ be a 
  sequence of positive real numbers such that $\sum_{n =1}^{\infty} \alpha_n^2 < 
  \infty$.
  For $\Phi \in [0,\infty)$ the a priori bound
  \begin{align*}
    &\E_n \big[\|w^{n+1} - w^*\|^2\big] \\
    &\leq \| w_1 - w^*\|^2 \exptextB{ \sum_{i=1}^{\infty} \big(2 \sigma^2 \min\big\{\Phi^{-2}, 
      \alpha_i^2 \big\} + 4 \alpha_i^2 L^2 m_i\big)} \\
    &\quad + \sum_{i=1}^{\infty} \big(\Phi^2 \min \big\{ \E_i \big[ \|\iota \nabla 
    f_i(w^i)\|^{-2} \big], 
    \alpha_i^2 \big\} + 2 ( 1- m_i) + 2 \min \big\{1, 2 \alpha_i^2 \sigma^2 \big\}\big)\\
    &\qquad \times \exptextB{\sum_{j=i+1}^{\infty} \big(2 \sigma^2 \min\big\{\Phi^{-2}, 
      \alpha_j^2 \big\} + 4 \alpha_j^2 L^2 m_j\big)} =: M_2
  \end{align*}
  is fulfilled, where $\min\big\{\Phi^{-2}, x \big\} = x$ for $\Phi = 0$ and every $x \in \R$ 
  and 
  \begin{align*}
    m_i =
    \begin{cases}
      1, & \text{\emph{if }} 2 \alpha_i^2 L^2 \E_{i-1} \big[ \| w^i - w^*\|^2\big] \le 1, \\
      0, & \text{\emph{otherwise}}.
    \end{cases}
  \end{align*}
  Furthermore, there exists $n_0 \in \N$ such that $m_i = 1$ for all $i \geq n_0$.
\end{lemma}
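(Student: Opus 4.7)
My plan is to derive a discrete Gronwall-type recursion for $a_n := \E_{n-1}[\|w^n - w^*\|^2]$ and iterate. Expanding
\begin{equation*}
  \|w^{n+1} - w^*\|^2 = \|w^n - w^*\|^2 - 2\dual{T_n}{w^n - w^*} + \|T_n\|^2,
\end{equation*}
with $T_n = \alpha_n \iota \nabla f_n(w^n)/(1 + \alpha_n \|\iota \nabla f_n(w^n)\|)$, I would use the second-order splitting from Section~\ref{section:TSGD} to write $T_n = \alpha_n \iota \nabla f_n(w^n) + R_n$, where $R_n$ is the taming correction. Taking $\E_{\xi_n}$ conditional on $\xi_1,\ldots,\xi_{n-1}$ sends $\alpha_n \iota \nabla f_n(w^n)$ to $\alpha_n \iota \nabla F(w^n)$, so the leading drift contribution $-2\alpha_n \dual{\iota \nabla F(w^n)}{w^n - w^*}$ is non-positive by monotonicity of $\nabla F$ (Lemma~\ref{lem:Fconsequences} combined with $\nabla F(w^*)=0$) and can simply be dropped; since the argument only uses convexity here, strong convexity is not yet needed.

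\textbf{Bounding the taming correction and the quadratic step.} For the cross term $-2\dual{R_n}{w^n - w^*}$ I would apply a weighted Young inequality, using the parameter $\Phi$ to balance a contribution proportional to $2\sigma^2\min\{\Phi^{-2},\alpha_n^2\}\|w^n - w^*\|^2$ (obtained after invoking the variance identity $\E_{\xi_n}[\|\iota\nabla f_n(w^*)\|^2] = \sigma^2$) against an inhomogeneous remainder of the form $\Phi^2\min\{\E_n[\|\iota\nabla f_n(w^n)\|^{-2}],\alpha_n^2\}$; the two entries of the $\min$ arise from the competing elementary bounds $\|R_n\| \le \alpha_n$ and $\|R_n\| \le \alpha_n^2\|\iota\nabla f_n(w^n)\|^2/(1+\alpha_n\|\iota\nabla f_n(w^n)\|)$. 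The quadratic term is handled by $\|T_n\|^2 \le \min\{1,\alpha_n^2\|\iota\nabla f_n(w^n)\|^2\}$; inserting the Lipschitz bound $\|\iota\nabla f_n(w^n)\|^2 \le 2L_{\xi_n}^2\|w^n-w^*\|^2 + 2\|\iota\nabla f_n(w^*)\|^2$ and taking $\E_{\xi_n}$ produces both $4\alpha_n^2 L^2$ on $\|w^n-w^*\|^2$ and $2\min\{1,2\alpha_n^2\sigma^2\}$ on the constant side (the outer $\min$ surviving by concavity of $\min\{1,\cdot\}$). The $m_n$-split enters here: when $2\alpha_n^2 L^2 a_n \le 1$ the Lipschitz-contraction bound is admissible in the recursion, otherwise I retain only $\|T_n\|^2 \le 1$ and absorb the gap via the additive term $2(1-m_n)$.

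\textbf{Gronwall iteration and the tail claim.} Assembling these estimates and taking the outer expectation yields a recursion $a_{n+1} \le (1 + c_n) a_n + d_n$ with $c_n = 2\sigma^2\min\{\Phi^{-2},\alpha_n^2\} + 4\alpha_n^2 L^2 m_n$ and $d_n$ the remaining bracketed terms in the lemma's bound. Iterating and using $1+x \le \mathrm{e}^x$ yields the stated product-sum form. Summability of both series $\sum c_i$ and $\sum d_i$ follows from $\sum \alpha_i^2 < \infty$ together with $m_i \in \{0,1\}$, hence $M_2 < \infty$. The tail claim $m_i = 1$ for large $i$ then drops out: since $a_i \le M_2$ uniformly and $\alpha_i \to 0$, we have $2\alpha_i^2 L^2 a_i \le 2L^2 M_2 \alpha_i^2 \to 0$. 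The main obstacle, in my view, is avoiding circularity — $M_2$ is expressed in terms of the $m_i$, which themselves depend on the (so far unknown) $a_i$ — and I would resolve it by first proving the recursion pointwise without asserting finiteness, and only then using $m_i \le 1$ together with the summability of $\alpha_i^2$ to certify that $M_2$ is finite, so that the tail property of $m_i$ emerges as a clean corollary rather than an a priori ingredient.
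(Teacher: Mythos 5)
Your overall Gr\"onwall structure (one-step recursion for $\E_{n-1}[\|w^n-w^*\|^2]$, iterate, $1+x\le\exp{x}$) matches the paper's, but the central decomposition does not close. You split the increment as $T_n = \alpha_n\iota\nabla f_n(w^n) + R_n$ and drop the SGD drift by monotonicity after conditioning; the problem is the remainder. The claimed elementary bound $\|R_n\|\le\alpha_n$ is false: one has $\|R_n\| = \alpha_n^2\|\iota\nabla f_n(w^n)\|^2/(1+\alpha_n\|\iota\nabla f_n(w^n)\|)$, which behaves like $\alpha_n\|\iota\nabla f_n(w^n)\|$ for large gradients and is unbounded. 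Consequently the cross term $-2\dual{R_n}{w^n-w^*}$ can only be estimated either by $2\alpha_n L_{\xi_n}\|w^n-w^*\|^2+\dots$ (a first-order-in-$\alpha_n$ coefficient on $\|w^n-w^*\|^2$, which destroys the Gr\"onwall product because only $\sum_n\alpha_n^2<\infty$ is assumed and $\sum_n\alpha_n$ may diverge) or by $2\alpha_n^2\|\iota\nabla f_n(w^n)\|^2\|w^n-w^*\|\lesssim \alpha_n^2 L_{\xi_n}^2\|w^n-w^*\|^3+\dots$ (a third moment, which is not available under Assumption~\ref{ass:fStoch} and cannot be absorbed into a second-moment recursion). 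The paper avoids this by never isolating the full SGD increment: it keeps the tamed denominator on the monotone difference $\dual{\iota\nabla f_n(w^n)-\iota\nabla f_n(w^*)}{w^n-w^*}\ge 0$, which is dropped as nonnegative regardless of the denominator, and performs the denominator shift from $1+\alpha_n\|\iota\nabla f_n(w^n)\|$ to the deterministic $1+\alpha_n\Phi$ only on the small residual $\alpha_n\dual{\iota\nabla f_n(w^*)}{w^n-w^*}$, whose numerator has second moment $\sigma^2\|w^n-w^*\|^2$; the shift error is then genuinely second order and produces exactly the $\Phi$-dependent terms and the $\min\{\E_n[\|\iota\nabla f_n(w^n)\|^{-2}],\alpha_n^2\}$ term of the statement, which your attribution to ``competing bounds on $\|R_n\|$'' does not.

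Your resolution of the circularity in the tail claim is also itself circular. Bounding $m_i\le 1$ does not certify $M_2<\infty$: the inhomogeneous sum contains $\sum_{i}2(1-m_i)$, which is finite precisely when $m_i=1$ for all large $i$ --- the very claim at issue. The paper's route is to run a second, cruder iteration of the same one-step inequality in which $\min\{1,2\alpha_n^2L^2\E_{n-1}[\|w^n-w^*\|^2]\}$ is replaced by $2\alpha_n^2L^2\E_{n-1}[\|w^n-w^*\|^2]$ unconditionally; this yields a finite uniform bound on $\E_{n-1}[\|w^n-w^*\|^2]$ from $\sum_n\alpha_n^2<\infty$ alone, whence $\alpha_n\to0$ forces $m_n=1$ eventually, and only then does the stated $M_2$ become finite.
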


\begin{remark}
  The advantage of this particular a priori bound is that the bound does not grow very 
  much when the initial step size is increased. The corresponding proof for the 
  SGD method looks very similar, but does not have the factors 
  $m_n$ or $\min\{\dots, \alpha_n^2\}$, $n \in \N$. This means that the first few terms 
  in the products become very large, even for moderately sized Lipschitz constants, 
  reflecting the fact that a too large step size can 
  lead to instability. In our case, these large terms are multiplied by $0$ or cut off by the 
  $\min$-function.
  The constant $\Phi$ can be used to tune the error bound further in case $\sigma$ or 
  $\E_n \big[ \|\iota \nabla f_n( w^n)\|^{-2} \big]$, $n \in \N$, is large.
\end{remark}

\begin{proof}[Proof of Lemma~\ref{lem:apriori2}]
  We test the scheme defined by~\eqref{eq:stochTamedEuler}
  with $w^n - w^*$, in order to obtain that
  \begin{align}\label{eq:proofApriori1}
    \begin{split}
      &\dual{w^{n+1} - w^* - (w^n - w^*)}{w^n - w^*} \\
      &\quad + \frac{\alpha_n \dual{\iota \nabla f_n(w^n) - \iota \nabla 
          f_n(w^*)}{w^n - w^*}}{1 
        + \alpha_n \|\iota \nabla f_n(w^n)\|} 
      = - \frac{\alpha_n \dual{\iota \nabla f_n(w^*)}{w^n - w^*}}{1 + \alpha_n \|\iota 
        \nabla f_n(w^n)\|}.
    \end{split}
  \end{align}
  Using the identity $\dual{u-v}{u} = \frac{1}{2} (\|u\|^2 - \|v\|^2 + \|u-v\|^2)$, $u,v \in H$, the 
  first summand on the left-hand side can be written as
  \begin{align*}
    &- \dual{w^n - w^* - (w^{n+1} - w^*)}{w^n - w^*} \\
    &= - \frac{1}{2} \big( \|w^n - w^*\|^2 - \|w^{n+1} - w^*\|^2 + \|w^{n+1} - w^n\|^2 \big)\\
    &= \frac{1}{2} \Big(\|w^{n+1} - w^*\|^2 - \|w^n - w^*\|^2 - \frac{\alpha_n^2 \| \iota 
      \nabla  f_n(w^n)\|^2 }{(1 + \alpha_n \|\iota \nabla f_n(w^n)\|)^2}\Big), 
  \end{align*}
  where we inserted the scheme in the last step. 
  Thus, inserting the monotonicity condition for $f_n$ into 
  \eqref{eq:proofApriori1} and multiplying the inequality with the factor two, it follows that
  \begin{align*}
    &\|w^{n+1} - w^*\|^2 - \|w^n - w^*\|^2 \\
    &\leq - \frac{2\alpha_n \dual{\iota \nabla f_n(w^*)}{w^n - w^*}}{1 + \alpha_n 
      \|\iota \nabla  
      f_n(w^n)\|} 
    + \frac{\alpha_n^2 \| \iota \nabla f_n(w^n)\|^2 }{(1 + \alpha_n \|\iota \nabla  
      f_n(w^n)\|)^2} 
    =: I_1 + I_2.
  \end{align*}
  Since the tamed Euler scheme is the forward Euler scheme with a second order 
  perturbation, it follows that
  \begin{align*}
    \frac{\alpha_n \iota \nabla f_n(w^*)}{1 + \alpha_n \|\iota \nabla  f_n(w^n)\|}
    = \frac{\alpha_n \iota \nabla f_n(w^*)}{1 + \alpha_n \Phi}
    + \frac{ \alpha_n^2 \big(\Phi - \|\iota \nabla f_n(w^n)\|\big) \iota \nabla f_n(w^*) 
    }{(1 + \alpha_n \|\iota \nabla  f_n(w^n)\|) (1 + \alpha_n \Phi )}
  \end{align*}
  for $\Phi \in [0,\infty)$. Note that we have $w^*$ in the numerator of the left-hand-side 
  but $w^n$ in the denominator. We insert this equality into $I_1$ and use the 
  Cauchy--Schwarz inequality and 
  Young's  inequality for products in order to obtain
  \begin{align*}
    I_1 
    &= - \frac{\alpha_n \dual{2 \iota \nabla f_n(w^*)}{w^n - w^*}} {1 + \alpha_n \|\iota \nabla  
      f_n(w^n)\|} \\
    &\leq - \frac{2 \alpha_n \dual{\iota \nabla f_n(w^*)}{w^n - w^*} }{1 + \alpha_n \Phi}
    + \frac{2 \alpha_n^2 \Phi \| \iota \nabla f_n(w^*)\| \| w^n - w^*\| }{(1 + \alpha_n \|\iota 
    \nabla  f_n(w^n)\|) (1 + \alpha_n \Phi )} \\
    &\quad + \frac{2 \alpha_n^2 \|\iota \nabla f_n(w^n)\| \| \iota \nabla f_n(w^*)\| \| w^n - 
    w^*\| }{(1 + \alpha_n \|\iota \nabla  f_n(w^n)\|) (1 + \alpha_n \Phi )} \\
    &\leq - \frac{2 \alpha_n \dual{\iota \nabla f_n(w^*)}{w^n - w^*} }{1 + \alpha_n \Phi}
    + \frac{ \alpha_n^2 \big(\Phi^2 + \|\iota \nabla f_n(w^n)\|^2\big)}{(1 + \alpha_n \|\iota 
    \nabla  f_n(w^n)\|)^2} \\
    &\quad + \frac{2 \alpha_n^2 \| \iota \nabla f_n(w^*)\|^2 \| w^n - w^*\|^2 }{(1 + \alpha_n 
    \Phi )^2}
    =: I_{1,1} + I_{1,2} + I_{1,3}.
  \end{align*}
  For $I_{1,1} = - \frac{2 \alpha_n \dual{\iota \nabla f_n(w^*)}{w^n - w^*} }{1 + \alpha_n 
  \Phi}$, we notice that $\E_{\xi_n} [I_{1,1}] = 0$. Moreover, for $I_{1,2} = \frac{\alpha_n^2 
  ( 
  \Phi^2 + \|\iota \nabla f_n(w^n)\|^2 )}{(1 + \alpha_n \|\iota \nabla f_n(w^n)\|)^2}$, we get
  \begin{align*}
    I_{1,2}
    &\leq \Phi^2 \min \big\{ \|\iota \nabla f_n(w^n)\|^{-2} , \alpha_n^2 \big\}
    + \min \big\{1, \alpha_n^2 \|\iota \nabla f_n(w^n)\|^2 \big\} \\
    &\leq \Phi^2 \min \big\{ \|\iota \nabla f_n(w^n)\|^{-2} , \alpha_n^2 \big\}
    + \min \big\{1, 2 \alpha_n^2 L_{\xi_n}^2 \|w^n - w^*\|^2\big\}\\ 
    &\quad + \min \big\{1,2 \alpha_n^2 \|\iota \nabla f_n(w^*)\|^2 \big\}
  \end{align*}
  and
  \begin{align*}
    I_{1,3}
    = \frac{2\alpha_n^2 \|\iota \nabla f_n(w^*)\|^2 \|w^n - w^*\|^2}{(1 + \alpha_n \Phi )^2}
    \leq 2 \|\iota \nabla f_n(w^*)\|^2 \min \big\{\Phi^{-2}, \alpha_n^2 \big\} \|w^n - w^*\|^2.
  \end{align*}
  A bound for $I_2$ is given by
  \begin{align*}
    I_2 &= \frac{\alpha_n^2 \| \iota \nabla f_n(w^n)\|^2 }{(1 + \alpha_n \|\iota \nabla 
    f_n(w^n)\| )^2}
    \leq \min \big\{1, \alpha_n^2 \| \iota \nabla f_n(w^n)\|^2 \big\}\\
    &\leq \min \big\{1, 2 \alpha_n^2 L_{\xi_n}^2 \| w^n - w^*\|^2 \big\}
    + \min \big\{1, 2 \alpha_n^2 \|\iota \nabla f_n(w^*)\|^2 \big\}.
  \end{align*}  
  Then it follows
  \begin{align}
    \label{eq:aprioriBoundMn}
    \begin{split}
      &\|w^{n+1} - w^*\|^2 - \|w^n - w^*\|^2 \leq  I_1 + I_2\\
      &\leq - \frac{2 \alpha_n \dual{\iota \nabla f_n(w^*)}{w^n - w^*} }{1 + \alpha_n \Phi}
      + \Phi^2 \min \big\{ \|\iota \nabla f_n(w^n)\|^{-2} , \alpha_n^2 \big\}\\
      &\quad + 2 \min \big\{1, 2 \alpha_n^2 L_{\xi_n}^2 \|w^n - w^*\|^2\big\} + 2 \min 
      \big\{1,2 \alpha_n^2 \|\iota \nabla f_n(w^*)\|^2 \big\}\\
      &\quad + 2 \|\iota \nabla f_n(w^*)\|^2 \min \big\{\Phi^{-2}, \alpha_n^2 \big\} \|w^n - 
      w^*\|^2.
    \end{split}
  \end{align}
  Taking the $\E_n$-expectation, we then obtain
  \begin{equation}
    \label{eq:aprioriBoundMnExp}
    \begin{aligned}
      &\E_n \big[\|w^{n+1} - w^*\|^2\big] \\
      &\quad \leq \big(1 + 2 \sigma^2 \min\big\{ \Phi^{-2}, \alpha_n^2 \big\} \big) \E_{n-1} 
      \big[ 
      \|w^n - w^*\|^2 \big]\\
      &\qquad + \Phi^2 \min \big\{ \E_n \big[\|\iota \nabla f_n(w^n)\|^{-2}\big], \alpha_n^2 
      \big\}\\
      &\qquad + 2 \min \big\{1, 2 \alpha_n^2 L^2 \E_{n-1} \big[ \|w^n - w^*\|^2 \big]\big\} 
      + 2 \min \big\{1,2 \alpha_n^2 \sigma^2\big\}\\
      &\quad= \big(1 + 2 \min\big\{ \Phi^{-2}, \alpha_n^2 \big\} \sigma^2+ 4 \alpha_n^2 L^2 
      m_n\big) \E_{n-1} \big[ \|w^n - w^*\|^2 \big]\\
      &\qquad + \Phi^2 \min \big\{ \E_n \big[\|\iota \nabla f_n(w^n)\|^{-2}\big], 
      \alpha_n^2 \big\}
      + 2 (1-m_n) 
      + 2 \min \big\{1,2 \alpha_n^2 \sigma^2\big\},
    \end{aligned}
  \end{equation}
  with $m_n$ defined as in the lemma statement. Reinserting the bound 
  repeatedly thus yields
  \begin{align*}
    &\E_n \big[\|w^{n+1} - w^*\|^2\big] \\
    &\leq \| w_1 - w^*\|^2 \prod_{i=1}^{n} \big(1 + 2 \sigma^2 \min\big\{\Phi^{-2}, 
    \alpha_i^2 \big\} + 4 \alpha_i^2 L^2 m_i\big) \\
    &\quad + \sum_{i=1}^{n} \big(\Phi^2 \min \big\{ \E_i \big[ \|\iota \nabla 
    f_i(w^i)\|^{-2} \big], 
    \alpha_i^2 \big\} + 2 ( 1- m_i) + 2 \min \big\{1, 2 \alpha_i^2 \sigma^2 \big\}\big)\\
    &\qquad \times \prod_{j=i+1}^{n} \big(1 + 2 \sigma^2 \min\big\{\Phi^{-2}, 
    \alpha_j^2 \big\} + 4 \alpha_j^2 L^2 m_j\big).
  \end{align*}
  Finally, we apply the inequality $1 + x \leq \exptext{x}$, $x \in \R$, and make 
  the bound independent of $n$ by bounding the final sums by the corresponding infinite 
  sums, in order to obtain
  \begin{align*}
    &\E_n \big[\|w^{n+1} - w^*\|^2\big] \\
    &\leq \| w_1 - w^*\|^2 \exptextB{ \sum_{i=1}^{\infty} \big(2 \sigma^2 \min\big\{\Phi^{-2}, 
      \alpha_i^2 \big\} + 4 \alpha_i^2 L^2 m_i\big)} \\
    &\quad + \sum_{i=1}^{\infty} \big(\Phi^2 \min \big\{ \E_i \big[ \|\iota \nabla 
    f_i(w^i)\|^{-2} \big], 
    \alpha_i^2 \big\} + 2 ( 1- m_i) + 2 \min \big\{1, 2 \alpha_i^2 \sigma^2 \big\}\big)\\
    &\qquad \times \exptextB{\sum_{j=i+1}^{\infty} \big(2 \sigma^2 \min\big\{\Phi^{-2}, 
      \alpha_j^2 \big\} + 4 \alpha_j^2 L^2 m_j\big)}.
  \end{align*}
  It remains to verify, that there exists $n_0 \in \N$ such that $m_n = 1$ for all $n \geq 
  n_0$. This can be done by estimating \eqref{eq:aprioriBoundMnExp} and following a 
  similar line of argumentation as before. First, we can write
  \begin{align*}
    \E_n \big[\|w^{n+1} - w^*\|^2\big] 
    \leq \big(1 + 2 \alpha_n^2 \sigma^2 + 4 \alpha_n^2 L^2 \big) \E_{n-1} \big[ \|w^n - 
    w^*\|^2\big] + \Phi^2 \alpha_n^2 + 4 \alpha_n^2 \sigma^2.
  \end{align*}
  Reinserting the inequality $n -1$ times, it follows that
  \begin{align*}
    &\E_n \big[\|w^{n+1} - w^*\|^2\big] \\
    &\leq \| w_1 - w^*\|^2 \prod_{i=1}^{n} \big(1 + 2 \sigma^2 \alpha_i^2 + 4 \alpha_i^2 L^2 
    \big) 
    + \sum_{i=1}^{n} 4 \alpha_i^2 \sigma^2 \prod_{j=i+1}^{n} \big(1 + 2 \sigma^2 
    \alpha_j^2 + 4 \alpha_j^2 L^2 \big)\\
    &\leq \| w_1 - w^*\|^2 \exptextB{ \big(2 \sigma^2 + 4 L^2 \big) \sum_{i=1}^{\infty} 
    \alpha_i^2 }
    + \sum_{i=1}^{\infty} 4 \alpha_i^2 \sigma^2 \exptextB{ \big(2 \sigma^2 + 4 L^2 \big) 
    \sum_{j=i+1}^{\infty} \alpha_j^2 }.
  \end{align*}
  Since $\sum_{n =1}^{\infty} \alpha_n^2 < \infty$ there is thus a $n_0 \in \N$ such that $2 \alpha_n^2 L^2 \E_{n-1} \big[ \| w^n - w^*\|^2\big] \leq 1$ for all $n \geq n_0$.
\end{proof}

\begin{lemma} \label{lem:apriori4}
  Let Assumption~\ref{ass:fHigherIntegrability} be fulfilled and 
  let $\{\alpha_n\}_{n \in \N}$ be a sequence of positive real numbers such that $\sum_{n 
  =1}^{\infty} \alpha_n^2 < \infty$.
  Then the a priori bound  
  \begin{align*}
    \E_n \big[\|w^{n+1} - w^*\|^4\big] 
    &\leq \|w^1 - w^*\|^4 \exptextB{\sum_{i = 1}^{\infty} c_1^i(\alpha_i)} \\
    &\quad + \sum_{i=1}^{\infty} \big(c_2^i(\alpha_i) M_2 + c_3^i(\alpha_i)\big) 
    \exptextB{\sum_{j = i+ 1}^{\infty} c_1^j(\alpha_j) } =: M_4
  \end{align*}
  is fulfilled for $c_1^i, c_2^i, c_3^i \colon (0,\infty) \to (0,\infty)$ such that there exist 
  $C_1^k, C_2^k, C_3^k, C_4^k \in (0,\infty)$ with $c_k^i(\alpha) \leq C_1^i \min \{C_2^i, 
  \alpha^4\} + C_3^i \min \{C_4^i, \alpha^2\}$ for all $\alpha \in (0,\infty)$, $k \in \{1,2,3\}$ 
  and $i \in \N$.
\end{lemma}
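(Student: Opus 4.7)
The plan is to square the pointwise one-step inequality underlying the proof of Lemma~\ref{lem:apriori2}, take $\E_n$, and use the higher-moment bounds of Assumption~\ref{ass:fHigherIntegrability} together with the already-established estimate $M_2$. Concretely, the estimate \eqref{eq:aprioriBoundMn} has the form
\begin{align*}
  \|w^{n+1} - w^*\|^2 \leq \|w^n - w^*\|^2 + A_n + B_n,
\end{align*}
where $A_n \coloneqq -\frac{2\alpha_n \dual{\iota \nabla f_n(w^*)}{w^n - w^*}}{1 + \alpha_n \Phi}$ has zero $\xi_n$-conditional expectation while $B_n$ is a sum of nonnegative, $\min$-truncated terms. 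Since both sides are nonnegative the right-hand side is nonnegative as well, so squaring preserves the inequality and gives
\begin{align*}
  \|w^{n+1} - w^*\|^4 \leq \|w^n - w^*\|^4 + 2\|w^n - w^*\|^2 (A_n + B_n) + (A_n + B_n)^2.
\end{align*}

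Taking the $\E_n$-expectation, the cross term $2\E_n[\|w^n - w^*\|^2 A_n]$ vanishes because $w^n$ is measurable with respect to $\xi_1, \ldots, \xi_{n-1}$ and $\E_{\xi_n}[\iota \nabla f_n(w^*)] = \iota \nabla F(w^*) = 0$. The Cauchy--Schwarz inequality bounds $A_n^2$ pointwise by $4 \min\{\Phi^{-2}, \alpha_n^2\}\|\iota\nabla f_n(w^*)\|^2 \|w^n - w^*\|^2$, while the mixed term $2 A_n B_n$ is reduced to pieces of the form $A_n^2$ and $B_n^2$ via Young's inequality. Every summand of $B_n$ in \eqref{eq:aprioriBoundMn} has the form $\min\{C, \alpha_n^2 \cdot (\text{stochastic factor})\}$; using the elementary identities $\min\{a,b\}^2 = \min\{a^2, b^2\}$ and $\min\{a,b\}\min\{c,d\} \leq \min\{ac, bd\}$ for nonnegative reals, each product appearing in $B_n^2$ (and in $\|w^n - w^*\|^2 B_n$) remains a single $\min$-expression, now in $\alpha_n^4$ or multiplied by an additional factor of $\|w^n - w^*\|^2$.

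After taking $\E_{\xi_n}$ and invoking the higher-order constants $L_4$, $\sigma_4$, $\mu_2$ of Assumption~\ref{ass:fHigherIntegrability}, every resulting summand falls into exactly one of three categories: (i) a multiple of $\|w^n - w^*\|^4$ whose coefficient is already of the required form $C \min\{C', \alpha_n^4\} + C'' \min\{C''', \alpha_n^2\}$; (ii) a multiple of $\|w^n - w^*\|^2$, which is bounded via $\E_{n-1}[\|w^n - w^*\|^2] \leq M_2$; and (iii) a deterministic constant of the same $\min$-form. Collecting these contributions yields a recursion
\begin{align*}
  \E_n\bigl[\|w^{n+1} - w^*\|^4\bigr] \leq (1 + c_1^n(\alpha_n))\,\E_{n-1}\bigl[\|w^n - w^*\|^4\bigr] + c_2^n(\alpha_n) M_2 + c_3^n(\alpha_n),
\end{align*}
with each $c_k^n$ of the claimed form. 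Iterating this recursion from $n$ back to $1$ and applying $1 + x \leq \exp(x)$, exactly as in the endgame of Lemma~\ref{lem:apriori2}, produces $M_4$; convergence of the exponent and remainder series follows from $\sum_n \alpha_n^2 < \infty$ combined with the uniform bound $c_k^n(\alpha_n) \lesssim \alpha_n^4 + \alpha_n^2$ enforced by the $\min$-truncations.

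The main obstacle will be the careful bookkeeping needed to enumerate every term that appears when $2\|w^n - w^*\|^2 B_n + (A_n + B_n)^2$ is fully expanded, to combine $\min$-truncated factors without losing the $(\alpha^2, \alpha^4)$ dichotomy, and to verify that each resulting coefficient indeed satisfies the template $c_k^i(\alpha) \leq C_1^i \min\{C_2^i, \alpha^4\} + C_3^i \min\{C_4^i, \alpha^2\}$ uniformly in $i$.
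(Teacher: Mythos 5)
Your proposal is correct and follows essentially the same route as the paper: both start from the one-step bound $\|w^{n+1}-w^*\|^2-\|w^n-w^*\|^2\le A_n+B_n$ of Lemma~\ref{lem:apriori2}, pass to the fourth power to obtain $\|w^n-w^*\|^4+2(A_n+B_n)\|w^n-w^*\|^2+\mathcal{O}(A_n^2+B_n^2)$, kill the martingale cross term, control the remaining terms via $\E_{\xi_n}[B_n]$, $\E_{\xi_n}[A_n^2]$, $\E_{\xi_n}[B_n^2]$ using $L_4$, $\sigma_4$ and the $\min$-truncations, insert $M_2$ for the second-moment contributions, and iterate with $1+x\le\exp{x}$. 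The only (immaterial) difference is that you square the inequality directly, whereas the paper multiplies by $2\|w^{n+1}-w^*\|^2$ and uses the identity $|a|^2-|b|^2\le 2(a-b)a$; both yield the same expansion up to constants.
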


\begin{proof}
  Within the proof of Lemma~\ref{lem:apriori2}, we verified the inequality 
  \eqref{eq:aprioriBoundMn}. Starting from this point, we find
  \begin{equation}
    \label{eq:apriori4}
    \|w^{n+1} - w^*\|^2 - \|w^n - w^*\|^2 \leq A_n + B_n,
  \end{equation}
  where
  \begin{align*}
    A_n &= -\frac{2 \alpha_n \dual{\iota \nabla f_n(w^*)}{w^n - 
    w^*} }{1 + \alpha_n \Phi} \qquad \text{and} \\
    B_n &= \Phi^2 \min \big\{ \|\iota \nabla f_n(w^n)\|^{-2} , \alpha_n^2 \big\}
     + 2 \min \big\{1, 2 \alpha_n^2 L_{\xi_n}^2 \|w^n - w^*\|^2\big\}\\
      &\quad + 2 \min \big\{1,2 \alpha_n^2 \|\iota \nabla f_n(w^*)\|^2 \big\}
      + 2 \|\iota \nabla f_n(w^*)\|^2 \min \big\{\Phi^{-2}, \alpha_n^2 \big\} \|w^n - w^*\|^2 
  \end{align*}
  for $\Phi \in [0,\infty)$. We note that the parameter $\Phi$ can be chosen such that 
  $\E_n[B_n]$ is as small as possible.
  From this it follows that 
  \begin{align*}
    \E_{\xi_n} [A_n] &= 0,\\
    \E_{\xi_n} [B_n] &\leq  \Phi^2 \min \big\{ \E_{\xi_n} \big[ \|\iota \nabla f_n(w^n)\|^{-2}\big] 
    , 
    \alpha_n^2 \big\}
    + 2 \min \big\{1, 2 \alpha_n^2 L^2 \|w^n - w^*\|^2\big\}\\
    &\quad + 2 \min \big\{1,2 \alpha_n^2 \sigma^2 \big\}
    + 2 \sigma^2 \min \big\{\Phi^{-2}, \alpha_n^2 \big\} \|w^n - w^*\|^2, \\
    \E_{\xi_n} [A_n^2] 
    &\leq 4 \sigma^2 \min\{ \Phi^{-2}, \alpha_n^2 \} \| w^n - w^*\|^2 \qquad \text{and}\\
    \E_{\xi_n} [B_n^2] 
    &\leq 4 \Phi^4 \min \big\{ \big(\E_{\xi_n} \big[ \|\iota \nabla f_n(w^n)\|^{-2}\big]\big)^2 , 
    \alpha_n^4 \big\}
    + 16 \min \big\{1, 4 \alpha_n^4 L_4^4 \|w^n - w^*\|^4\big\}\\
    &\quad + 16 \min \big\{1,4 \alpha_n^4 \sigma_4^4 \big\}
    + 16  \sigma_4^4 \min \big\{\Phi^{-4}, \alpha_n^4 \big\} \|w^n - w^*\|^4.
  \end{align*}
   We note that for $a,b \in \R$ we have the identity $(a - b)a = \frac{1}{2} \big( |a|^2 - |b|^2 
   + |a - b|^2 \big)$, and thus $|a|^2 - |b|^2 \le 2(a-b)a$. By multiplying the inequality from 
   \eqref{eq:apriori4} with the factor $2\|w^{n+1} - w^*\|^2$, we therefore obtain
  \begin{align*}
    \|w^{n+1} - w^*\|^4 - \|w^n - w^*\|^4
    &\leq 2\big( A_n + B_n \big) \|w^{n+1} - w^*\|^2\\
    &\leq 2\big( A_n + B_n \big) \big( \|w^n - w^*\|^2 + A_n + B_n  \big)\\
    &\leq 2(A_n + B_n) \|w^n - w^*\|^2  + 4 A_n^2 + 4 B_n^2 
  \end{align*}
  and in $\E_{\xi_n}$-expectation
  \begin{align*}
    & \E_{\xi_n} \big[ \|w^{n+1} - w^*\|^4\big] - \|w^n - w^*\|^4\\
    &\leq 4 \Big(2 L^2 \min \big\{\tfrac{1}{2} L^{-2} \|w^n - w^*\|^{-2}, \alpha_n^2 \big\}
    + \sigma^2 \min \big\{\Phi^{-2}, \alpha_n^2 \big\}\\
    &\qquad + 64 L_4^4 \min \big\{ \tfrac{1}{4} L_4^{-4} \|w^n - w^*\|^{-4}, \alpha_n^4 
    \big\} + 16 \sigma_4^4 \min \big\{\Phi^{-4}, \alpha_n^4 \big\} \Big)\|w^n - w^*\|^4 \\
    &\quad + 2 \Big( \Phi^2 \min \big\{ \E_{\xi_n} \big[ \|\iota \nabla f_n(w^n)\|^{-2}\big] , 
    \alpha_n^2 \big\} 
    + 4 \sigma^2\min \big\{\tfrac{1}{2} \sigma^{-2} , \alpha_n^2 \big\} \\
    &\qquad + 8  \sigma^2 \min\{ \Phi^{-2}, \alpha_n^2 \} \Big) \|w^n - w^*\|^2 \\
    &\quad + 16 \Phi^4 \min \big\{ \big(\E_{\xi_n} \big[ \|\iota \nabla 
    f_n(w^n)\|^{-2}\big]\big)^2 
    , \alpha_n^4 \big\}
    + 256 \sigma_4^4 \min \big\{ \tfrac{1}{4}\sigma_4^{-4},\alpha_n^4 \big\}\\
    &=: c_1^n (\alpha_n) \|w^n - w^*\|^4 + c_2^n(\alpha_n) \|w^n - w^*\|^2 
    + c_3^n(\alpha_n).
  \end{align*}
  Adding $\|w^n - w^*\|^4$ to both sides of the inequality, taking the 
  $\E_{n-1}$-expectation and reinserting the bound, we obtain
  \begin{align*}
    & \E_n \big[ \|w^{n+1} - w^*\|^4\big] \\
    &\leq \big(1 + c_1^n(\alpha_n) \big) \E_{n-1} \big[ \|w^n - w^*\|^4 \big]
    + c_2^n(\alpha_n) M_2 + c_3^n(\alpha_n)\\
    &\leq \|w^1 - w^*\|^4 \prod_{i = 1}^{n} \big(1 + c_1^i(\alpha_i) \big) 
    + \sum_{i=1}^{n} \big(c_2^i(\alpha_i) M_2 + c_3^i(\alpha_i)\big) \prod_{j = i+ 
    1}^{n} \big(1 + c_1^j(\alpha_j) \big) \\
    &\leq \|w^1 - w^*\|^4 \exptextB{\sum_{i = 1}^{\infty} c_1^i(\alpha_i) } 
    + \sum_{i=1}^{\infty} \big(c_2^i(\alpha_i) M_2 + c_3^i(\alpha_i)\big) 
    \exptextB{\sum_{j = i+ 1}^{\infty} c_1^j(\alpha_j)} .
  \end{align*}
  Finally, this is finite due to the assumption $\sum_{n  =1}^{\infty} \alpha_n^2 < \infty$.
\end{proof}

It is much easier to show the following \emph{pathwise} a priori bound, which provides the 
intuition for why the scheme is good; in $n$ steps, we can only make the error worse by 
$n$ in the worst case. This is a marked improvement over the situation for other explicit 
methods such as SGD, where the error may grow without bound. It is in fact similar to 
what one would get from an implicit scheme such as the implicit Euler, corresponding to 
the proximal point method in the context of optimization.

\begin{lemma} \label{lem:aprioriPath}
  Let $f(\xi, \cdot ) \colon \Omega \times H \to \R$ be G\^{a}teaux differentiable a.s.\ and
  let $\{\alpha_n\}_{n \in \N}$ be a sequence of positive real numbers.
  Then the a priori bound
  \begin{align*}
    \|w^{n+1} - w^*\|
    \leq \|w_1 - w^*\| + \sum_{i=1}^{n} \min \{1, \alpha_n \|\iota \nabla f_i(w^i)\| \}
    \leq \|w_1 - w^*\| + n
  \end{align*}
  is fulfilled.
\end{lemma}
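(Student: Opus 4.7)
The plan is to bound the individual increments and then telescope. Starting from the scheme, the norm of one step is
\[
\|w^{n+1} - w^n\| = \frac{\alpha_n \|\iota \nabla f_n(w^n)\|}{1 + \alpha_n \|\iota \nabla f_n(w^n)\|},
\]
so the task reduces to observing that for any $x \geq 0$ one has $\tfrac{x}{1+x} \leq \min\{1, x\}$. This is immediate since $\tfrac{x}{1+x} \leq 1$ (as the denominator dominates the numerator) and simultaneously $\tfrac{x}{1+x} \leq x$ (as $1+x \geq 1$). Applied with $x = \alpha_i \|\iota \nabla f_i(w^i)\|$, this yields
\[
\|w^{i+1} - w^i\| \leq \min\{1, \alpha_i \|\iota \nabla f_i(w^i)\|\}.
\]

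Next, I would use the triangle inequality telescopically:
\[
\|w^{n+1} - w^*\| \leq \|w_1 - w^*\| + \sum_{i=1}^{n} \|w^{i+1} - w^i\|,
\]
and substitute the per-step bound to obtain the first claimed inequality. The second inequality follows trivially from bounding each $\min\{1, \alpha_i \|\iota \nabla f_i(w^i)\|\}$ by $1$, giving a total contribution of $n$.

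There is no real obstacle here; the whole point of the lemma is that the taming denominator in the scheme's definition caps each step by $1$, in stark contrast to SGD where $\|w^{i+1} - w^i\| = \alpha_i \|\iota \nabla f_i(w^i)\|$ can be arbitrarily large. No probabilistic tools, no assumptions beyond Gâteaux differentiability (needed only to make the gradient well-defined), and no step size restriction are required; the estimate holds pathwise.
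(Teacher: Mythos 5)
Your proposal is correct and follows essentially the same route as the paper: both bound the single-step increment by $\min\{1,\alpha_i\|\iota\nabla f_i(w^i)\|\}$ using the elementary fact that $\frac{x}{1+x}\le\min\{1,x\}$, and then accumulate via the triangle inequality (the paper iterates the one-step recursion, which is the same telescoping argument). No gaps.
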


\begin{proof}
  We recall the TSGD scheme from \eqref{eq:stochTamedEuler}
  and obtain that
  \begin{align*}
    \|w^{n+1} - w^*\|
    &\leq \|w^{n+1} - w^n\| + \|w^n - w^*\|\\
    &= \frac{\alpha_n \|\iota \nabla f_n(w^n)\| }{1 + \alpha_n \|\iota \nabla 
    f_n(w^n)\|}
    + \|w^n - w^*\|\\
    &\leq \min \{1, \alpha_n \|\iota \nabla f_n(w^n)\| \} + \|w^n - w^*\|.
  \end{align*}
  Reinserting this inequality shows that $\|w^{n+1} - w^*\| \leq \|w_1 - w^*\| + n$ holds.
\end{proof}

\section{Error analysis}\label{section:erroranalysis}
Given  $z \in H$ and $\alpha > 0$, we define $T_{\alpha f_n, z}(w) \colon \Omega \times 
H \to H$ by
\begin{align*}
  T_{\alpha f_n, z}(w)
  = w - \frac{\alpha \iota \nabla f_n(w)}{1 + \alpha \|\iota \nabla f_n(z)\|}.
\end{align*}
This implies that the next iterate $w^{n+1}$ is given by $T_{\alpha_n f_n, w^n}(w^n)$.

\begin{lemma}\label{lem:SecondLipT}
  Let Assumption~\ref{ass:fStoch} be fulfilled and let $z \in H$, $\alpha 
  \in (0,\infty)$ and $\Xi \in [0,\infty)$ be given.
  It then follows that
  \begin{align*}
    \Big\| T_{\alpha f_n, z}(w) - w + \frac{\alpha 
      \iota \nabla f_n(w)}{1 + \alpha \Xi} \Big\|
    = \frac{ \alpha^2 \big| \Xi - \|\iota \nabla f_n(z)\|\big| \|\iota \nabla f_n(w)\|}
    {(1 + \alpha \|\iota \nabla f_n(z)\| ) (1 + \alpha \Xi )}
  \end{align*}
  for all $w \in H$.
\end{lemma}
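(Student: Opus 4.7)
The plan is to just do the direct algebraic manipulation: substitute in the definition of $T_{\alpha f_n, z}(w)$, combine the two fractions inside the norm over a common denominator, and then observe that the numerator factors cleanly.

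More concretely, first I would use the definition to rewrite
\[
T_{\alpha f_n, z}(w) - w + \frac{\alpha \iota \nabla f_n(w)}{1+\alpha \Xi}
= -\frac{\alpha \iota \nabla f_n(w)}{1+\alpha \|\iota \nabla f_n(z)\|} + \frac{\alpha \iota \nabla f_n(w)}{1+\alpha \Xi},
\]
so that we may factor out $\alpha \iota \nabla f_n(w)$ and obtain a scalar difference of two fractions. Bringing the two fractions to the common denominator $(1+\alpha \Xi)(1+\alpha \|\iota \nabla f_n(z)\|)$, the constant terms cancel and the remaining numerator is $\alpha(\|\iota \nabla f_n(z)\|-\Xi)$. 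Taking the norm and using that both denominator factors are positive yields the stated identity with the absolute value $|\Xi - \|\iota \nabla f_n(z)\||$ appearing in the numerator.

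There is no real obstacle here: the statement is an algebraic identity in the Hilbert space $H$, and Assumption~\ref{ass:fStoch} only enters in the background to ensure that $\iota \nabla f_n(w)$ and $\iota \nabla f_n(z)$ are well-defined elements of $H$ (via G\^ateaux differentiability). No convexity, monotonicity, or Lipschitz property is invoked. I would therefore write this as a two- or three-line computation rather than splitting it into substeps.
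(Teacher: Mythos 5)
Your computation is correct and is essentially identical to the paper's own proof: both insert the definition of $T_{\alpha f_n, z}$, combine the two fractions over the common denominator $(1 + \alpha \|\iota \nabla f_n(z)\|)(1 + \alpha \Xi)$, and take the norm of the resulting expression. Your remark that Assumption~\ref{ass:fStoch} enters only to guarantee that the gradients are well-defined elements of $H$ is also accurate.
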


\begin{proof}
  Inserting the definition of $T_{\alpha f_n, z}$, it follows that
  \begin{align*}
    &\Big\| T_{\alpha f_n, z}(w) - w + \frac{\alpha \iota \nabla f_n(w)}{1 + \alpha 
    \Xi} \Big\|
  = \Big\| \frac{\alpha \iota \nabla f_n(w)}{1 + \alpha \|\iota \nabla f_n(z)\|} - 
    \frac{\alpha \iota \nabla f_n(w)}{1 + \alpha \Xi } \Big\|\\
    &= \Big\| \frac{\alpha \iota \nabla f_n(w) + \alpha^2 \Xi \iota \nabla f_n(w) 
    }{(1 + \alpha \|\iota \nabla f_n(z)\| ) (1 + \alpha \Xi )}
    - \frac{\alpha \iota \nabla f_n(w) + \alpha^2 \|\iota \nabla f_n(z)\| \iota \nabla 
    f_n(w) }{ (1 + \alpha \|\iota \nabla f_n(z)\| ) (1 + \alpha \Xi ) } \Big\|\\
    &= \frac{ \alpha^2 \big| \Xi - \|\iota \nabla f_n(z)\|\big| \|\iota \nabla f_n(w)\|}
    {(1 + \alpha \|\iota \nabla f_n(z)\| ) (1 + \alpha \Xi )},
  \end{align*}
  for all $w \in H$, which proves the claim.
\end{proof}

\begin{lemma} \label{lem:boundForTheo}
  Let Assumption~\ref{ass:fStoch} be fulfilled and let 
  $\{\alpha_n\}_{n 
  \in \N}$  be a sequence of positive real numbers.
   For any $\Xi \in [0,\infty)$ it then follows that
  \begin{align*}
    \E_{\xi_n} \big[\| w^{n+1} - w^* \|^2\big]
    &\leq \Big( 1 - \E_{\xi_n} \Big[ \frac{2 \alpha_n \mu_{\xi_n} }{1 + \alpha_n \|\iota \nabla 
    f_n(w^n)\| } \Big] \Big) \| w^n - w^*\|^2\\
    &\quad + 2 \E_{\xi_n} \Big[ \frac{\alpha_n^2 \|\iota \nabla f_n(w^n) - \iota \nabla 
    f_n(w^*)\|^2}{(1 + \alpha_n \|\iota \nabla 
      f_n(w^n)\| )^2} \Big]\\
    &\quad + 2 \E_{\xi_n} \Big[ \frac{\alpha_n^2 \|\iota \nabla f_n (w^*)\|^2 }{( 1 
    + \alpha_n \|\iota \nabla f_n(w^n)\| )^2} \Big]\\
    &\quad + 2\E_{\xi_n} \Big[ \frac{ \alpha_n^2 \big| \Xi - \|\iota  \nabla f_n(w^n)\| \big| 
    \|\iota \nabla f_n(w^*)\|}
    {(1 + \alpha_n \|\iota \nabla f_n(w^n)\| ) (1 + \alpha_n \Xi )} \Big] 
    \| w^n - w^* \| 
  \end{align*}
  for every $n \in \N$.
\end{lemma}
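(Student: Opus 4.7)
\medskip

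\noindent\textbf{Plan of proof.} My approach is a direct expansion of $\|w^{n+1}-w^*\|^2$ using the scheme~\eqref{eq:stochTamedEuler}, followed by a careful split of the cross term that exposes the strong monotonicity and isolates a mean-zero contribution. Writing $w^{n+1}-w^* = (w^n-w^*) - \frac{\alpha_n \iota\nabla f_n(w^n)}{1+\alpha_n\|\iota\nabla f_n(w^n)\|}$ and squaring yields
\begin{equation*}
  \|w^{n+1}-w^*\|^2 = \|w^n-w^*\|^2 - \frac{2\alpha_n \dual{\iota\nabla f_n(w^n)}{w^n-w^*}}{1+\alpha_n\|\iota\nabla f_n(w^n)\|} + \frac{\alpha_n^2\|\iota\nabla f_n(w^n)\|^2}{(1+\alpha_n\|\iota\nabla f_n(w^n)\|)^2}.
\end{equation*}
I would then decompose $\iota\nabla f_n(w^n) = \big(\iota\nabla f_n(w^n) - \iota\nabla f_n(w^*)\big) + \iota\nabla f_n(w^*)$ in the middle cross term. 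The difference piece gives, by the monotonicity bound in Assumption~\ref{ass:fStoch}, exactly the $-\frac{2\alpha_n\mu_{\xi_n}}{1+\alpha_n\|\iota\nabla f_n(w^n)\|}\|w^n-w^*\|^2$ contribution appearing in the statement.

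The remaining cross term involves $\iota\nabla f_n(w^*)$ and is the main subtlety: one cannot simply take expectations because the denominator $1+\alpha_n\|\iota\nabla f_n(w^n)\|$ is $\xi_n$-dependent, so $\iota\nabla f_n(w^*)$ does not integrate to zero against this weight. This is exactly where the auxiliary constant $\Xi$ enters. Applying the identity of Lemma~\ref{lem:SecondLipT} (equivalently, just adding and subtracting $\frac{\alpha_n\iota\nabla f_n(w^*)}{1+\alpha_n\Xi}$ in the integrand), I would rewrite
\begin{equation*}
  \frac{\alpha_n \iota\nabla f_n(w^*)}{1+\alpha_n\|\iota\nabla f_n(w^n)\|}
  = \frac{\alpha_n \iota\nabla f_n(w^*)}{1+\alpha_n\Xi} + R_n,
  \qquad \|R_n\| = \frac{\alpha_n^2\,|\Xi-\|\iota\nabla f_n(w^n)\||\,\|\iota\nabla f_n(w^*)\|}{(1+\alpha_n\|\iota\nabla f_n(w^n)\|)(1+\alpha_n\Xi)}.
\end{equation*}
The first summand has the deterministic weight $\frac{\alpha_n}{1+\alpha_n\Xi}$, so by $\nabla F(w^*)=0$ its inner product against $w^n-w^*$ vanishes in $\E_{\xi_n}$-expectation; the $R_n$-piece is controlled by Cauchy--Schwarz, pulling the $\xi_n$-independent factor $\|w^n-w^*\|$ outside and yielding the last term in the claimed inequality.

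For the quadratic stochastic term $\frac{\alpha_n^2\|\iota\nabla f_n(w^n)\|^2}{(1+\alpha_n\|\iota\nabla f_n(w^n)\|)^2}$, I would use $\|\iota\nabla f_n(w^n)\|^2 \le 2\|\iota\nabla f_n(w^n)-\iota\nabla f_n(w^*)\|^2 + 2\|\iota\nabla f_n(w^*)\|^2$, which produces the two remaining terms of the bound. Taking $\E_{\xi_n}$ at the end and collecting the four contributions yields the stated inequality. I expect the only delicate bookkeeping to be the mean-zero splitting via $\Xi$: once Lemma~\ref{lem:SecondLipT} is invoked, the rest is routine Cauchy--Schwarz and arithmetic, and no step-size restrictions or a priori bounds are needed here, since the lemma is a one-step estimate.
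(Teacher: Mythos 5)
Your proof is correct and is essentially the paper's argument in a different packaging: the paper decomposes $w^{n+1}-w^* = \big(T_{\alpha_n f_n,w^n}(w^n)-T_{\alpha_n f_n,w^n}(w^*)\big) + \big(T_{\alpha_n f_n,w^n}(w^*)-w^*\big)$ and expands the square, which after regrouping yields exactly your direct expansion, with the same splitting of the cross term (monotonicity on the difference part, the $\Xi$-recentering of Lemma~\ref{lem:SecondLipT} on the mean-zero part) and a Young-inequality step equivalent to your $\|a+b\|^2\le 2\|a\|^2+2\|b\|^2$ bound on the quadratic term. All constants match the stated inequality, so no changes are needed.
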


\begin{proof}
  From $w^{n+1} = T_{\alpha_n f_n, w^n}(w^n)$ we obtain that
  \begin{align*}
    \| w^{n+1} - w^* \|^2
    &= \| T_{\alpha_n f_n, w^n}(w^n) - T_{\alpha_n f_n, w^n}(w^*) + T_{\alpha_n f_n, 
      w^n}(w^*) - w^*\|^2\\
    &= \| T_{\alpha_n f_n, w^n}(w^n) - T_{\alpha_n f_n, w^n}(w^*)\|^2 + \|T_{\alpha_n f_n, 
      w^n}(w^*) - w^*\|^2\\
    &\quad + 2 \dual{T_{\alpha_n f_n, w^n}(w^n) - T_{\alpha_n f_n, w^n}(w^*)}{T_{\alpha_n 
        f_n, w^n}(w^*) - w^*}\\
    &=: I_1 + I_2 + 2 I_3.
  \end{align*}
  For $I_1$, we can write
  \begin{align*}
    I_1&= \| T_{\alpha_n f_n, w^n}(w^n) - T_{\alpha_n f_n, w^n}(w^*)\|^2\\
    &= \big\| w^n - w^* + (T_{\alpha_n f_n, w^n} - I)(w^n) -
    (T_{\alpha_n f_n, w^n} - I)(w^*)\big\|^2\\
    &= \| w^n - w^*\|^2
    + 2 \dualb{w^n - w^*}{(T_{\alpha_n f_n, w^n} - I)(w^n) - (T_{\alpha_n f_n, w^n} - I)(w^*) } 
    \\
    &\quad + \| (T_{\alpha_n f_n, w^n} - I)(w^n) - (T_{\alpha_n f_n, w^n} - I)(w^*) \|^2
    =: I_{1,1} + 2I_{1,2} + I_{1,3}.
  \end{align*}
  The term $I_{1,2}$ can be estimated by applying the Cauchy--Schwarz inequality:
  \begin{align*}
    I_{1,2}&= \dualb{w^n - w^*}{ (T_{\alpha_n f_n, w^n} - I)(w^n) - (T_{\alpha_n f_n, w^n} - 
      I)(w^*) }\\
    &= - \dualB{w^n - w^*}{\frac{\alpha_n \iota \nabla f_n(w^n)}{1 + \alpha_n \|\iota 
    \nabla f_n(w^n)\|} 
      - \frac{\alpha_n \iota \nabla f_n(w^*)}{1 + \alpha_n \|\iota \nabla f_n(w^n)\|}}\\
    &\leq - \frac{\alpha_n \mu_{\xi_n} }{1 + \alpha_n \|\iota \nabla f_n(w^n)\| } \|w^n - 
    w^*\|^2.
  \end{align*}
  For $I_{1,3}$, we insert the definition of $T_{\alpha_n f_n, w^n}$ and find
  \begin{align*}
    I_{1,3} 
    &= \| (T_{\alpha_n f_n, w^n} - I)(w^n) - (T_{\alpha_n f_n, w^n} - I)(w^*) \|^2\\
    &=  \frac{\alpha_n^2 \|\iota \nabla f_n(w^n) - \iota \nabla f_n(w^*)\|^2 }{ (1 + \alpha 
    \|\iota \nabla f_n(w^n)\| )^2}.
  \end{align*}
  Thus, for $I_1$, we have
  \begin{align*}
    I_1
    \leq \Big( 1 - \frac{2 \alpha_n \mu_{\xi_n} }{1 + \alpha_n \|\iota \nabla f_n(w^n)\| } 
    \Big)\| w^n - w^*\|^2
    + \frac{\alpha_n^2 \|\iota \nabla f_n(w^n) - \iota \nabla f_n(w^*)\|^2 }{ (1 + \alpha 
      \|\iota \nabla f_n(w^n)\| )^2}.
  \end{align*}
  Further, $I_2$ can be written as
  \begin{align*}
    I_2
    = \|T_{\alpha_n f_n, w^n}(w^*) - w^*\|^2
    = \frac{\alpha_n^2 \|\iota \nabla f_n (w^*)\|^2 }{( 1 + \alpha_n \|\iota \nabla 
      f_n(w^n)\| )^2}.
  \end{align*}
  Finally, $I_3$ can be rewritten as
  \begin{align*}
    I_3 &= \dual{T_{\alpha_n f_n, w^n}(w^n) - T_{\alpha_n f_n, w^n}(w^*)}{T_{\alpha_n f_n, 
        w^n}(w^*) - w^*}\\
    &= \dual{(T_{\alpha_n f_n, w^n} - I) (w^n) - (T_{\alpha_n f_n, w^n} - I) 
      (w^*)}{(T_{\alpha_n f_n, w^n} - I) (w^*)}\\
    &\quad + \dual{w^n - w^*}{(T_{\alpha_n f_n, w^n} - I) (w^*)}
    =: I_{3,1} + I_{3,2}.
  \end{align*}
  Then for $I_{3,1}$, we insert the definition of $T_{\alpha_n f_n, w^n}$ and obtain
  \begin{align*}
    I_{3,1} &= \dual{(T_{\alpha_n f_n, w^n} - I) (w^n) - (T_{\alpha_n f_n, w^n} - I) 
      (w^*)}{(T_{\alpha_n f_n, w^n} - I) (w^*)}\\
    &\leq \frac{\alpha_n \|\iota \nabla f_n(w^n) - \iota \nabla f_n(w^*)\|}{1 + \alpha_n \|\iota 
    \nabla f_n(w^n)\|} \cdot
    \frac{\alpha_n \| \iota \nabla f_n (w^*)\|}{1 + \alpha_n \|\iota \nabla f_n(w^n)\|} \\
    &\leq \frac{1}{2}\frac{\alpha_n^2 \|\iota \nabla f_n(w^n) - \iota \nabla f_n(w^*)\|^2}{ (1 + 
    \alpha_n \|\iota \nabla f_n(w^n)\| )^2} 
    + \frac{1}{2} \frac{\alpha_n^2 \| \iota \nabla f_n (w^*)\|^2}{(1 + \alpha_n \|\iota 
      \nabla f_n(w^n)\| )^2},
  \end{align*}
  where we applied the Cauchy--Schwarz inequality and Young's inequality.
  To estimate $I_{3,2}$, we add and subtract an additional summand, so that
  \begin{align*}
    I_{3,2} &= \dual{w^n - w^*}{(T_{\alpha_n f_n, w^n} - I) (w^*)}\\
    &= \dualB{w^n - w^*}{(T_{\alpha_n f_n, w^n} - I) (w^*) + \frac{\alpha_n \iota \nabla 
    f_n(w^*)}{1 + \alpha_n \Xi }}
    - \dualB{w^n - w^*}{\frac{\alpha_n \iota \nabla f_n(w^*)}{1 + \alpha_n \Xi }},
  \end{align*}
  where $\E_{\xi_n}\big[ \dualb{w^n - w^*}{\frac{\alpha_n \iota \nabla f_n(w^*)}{1 
  + \alpha_n \Xi }} \big] = 0$ is fulfilled. Moreover, applying 
  Lemma~\ref{lem:SecondLipT} and the Cauchy--Schwarz inequality, we obtain
  \begin{align*}
    &\dualB{w^n - w^*}{(T_{\alpha_n f_n, w^n} - I) (w^*) + \frac{\alpha_n \iota \nabla 
        f_n(w^*)}{1 + \alpha_n \Xi }}\\
    &\leq \frac{ \alpha_n^2 \big|\Xi - \|\iota \nabla f_n(w^n)\| \big| \|\iota \nabla f_n(w^*)\|}
    {(1 + \alpha_n \|\iota \nabla f_n(w^n)\| ) (1 + \alpha_n \Xi )}  \| w^n 
    - w^* \|.
  \end{align*}
  Thus, the expectation of $I_3 = I_{3,1} + I_{3,2}$ can be bounded by
  \begin{align*}
    \E_{\xi_n} [I_3 ]
    &\leq \frac{1}{2} \E_{\xi_n} \Big[ \frac{\alpha_n^2 \|\iota \nabla f_n(w^n) - \iota \nabla 
    f_n(w^*)\|^2 }{\big(1 + \alpha_n \|\iota \nabla 
      f_n(w^n)\|\big)^2} + \frac{\alpha_n^2 \| \iota \nabla f_n 
      (w^*)\|^2}{ (1 + \alpha_n \|\iota \nabla f_n(w^n)\| )^2}\Big]\\
    &\quad + \E_{\xi_n} \Big[ \frac{ \alpha_n^2 \big| \Xi - \|\iota \nabla f_n(w^n)\| \big| \|\iota 
    \nabla f_n(w^*)\| }
    {(1 + \alpha_n \|\iota \nabla f_n(w^n)\| ) (1 + \alpha_n \Xi )}\Big]  \| w^n - w^* \| .
  \end{align*}
  Inserting the bounds for $I_1$, $I_2$ and $I_3$ into $\E_{\xi_n} \big[\| w^{n+1} - w^* 
  \|^2\big] = \E_{\xi_n} \big[ I_1 + I_2 + 2 I_3\big]$ finishes the proof.
\end{proof}

\begin{theorem}\label{thm:ConvNorm1}
  Let Assumption~\ref{ass:fHigherIntegrability} be fulfilled.
  For $\alpha_n = \frac{\vartheta}{n + \gamma}$, $n \in \N$, with $\gamma \in [0,\infty)$, 
  $\vartheta \in (0,\frac{1 + \gamma}{2\mu}]$ and
  \begin{align*}
    K = 2 \vartheta^2 L \mu_2 M_4^{\frac{3}{4}} 
    + \big(2 \vartheta^2 L^2 + 2 \vartheta^2 L \sigma + 2 \vartheta^2 \mu_2 \sigma 
    \big) M_2
    + 2 \vartheta^2 \sigma^2 M_2^{\frac{1}{2}} + 2 \vartheta^2 \sigma^2,
  \end{align*}
  it follows that
  \begin{align*}
    &\E_n \big[\| w^{n+1} - w^* \|^2\big]\\
    &\leq \| w_1 - w^*\|^2 (1+\gamma)^{2 \vartheta \mu}(n+1+\gamma)^{-2 \vartheta \mu} \\
    &\quad + \exptextB{\frac{2 \vartheta \mu}{1+\gamma}} K \begin{cases}
      (n+1+\gamma)^{-1} \frac{1}{2 \vartheta \mu-1}, &2 \vartheta \mu \in (1,\infty),\\
      (n+1+\gamma)^{-1} (1 + \ln{(n + \gamma)}), &2 \vartheta \mu = 1,\\
      (n+1+\gamma)^{-2 \vartheta \mu} \frac{(1+\gamma)^{2 \vartheta \mu-2}(2 \vartheta 
      \mu-2-\gamma)}{2 \vartheta \mu-1}, &2 \vartheta \mu \in [0,1),
    \end{cases}
  \end{align*}
  for every $n \in \N$.
\end{theorem}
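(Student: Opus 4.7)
The plan is to combine the one-step bound from Lemma~\ref{lem:boundForTheo} with the uniform a priori estimates $M_2$ and $M_4$ from Lemmas~\ref{lem:apriori2} and~\ref{lem:apriori4} to derive a Robbins--Monro type recursion
\[
a_{n+1} \leq (1 - 2 \mu \alpha_n) a_n + \frac{K}{\vartheta^2}\alpha_n^2
\]
for $a_n = \E_{n-1}\big[\|w^n - w^*\|^2\big]$, and then to solve this recursion explicitly in the three regimes $2\vartheta\mu > 1$, $= 1$, and $< 1$.

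To set up the recursion, I would apply Lemma~\ref{lem:boundForTheo} with $\Xi = 0$ (which makes $|\Xi - \|\iota\nabla f_n(w^n)\|| = \|\iota\nabla f_n(w^n)\|$ and $1+\alpha_n\Xi = 1$). The coefficient of $\|w^n - w^*\|^2$ contains $\E_{\xi_n}\big[\mu_{\xi_n}/(1 + \alpha_n\|\iota\nabla f_n(w^n)\|)\big]$, which I bound from below using $1/(1+x) \geq 1 - x$ for $x \geq 0$, yielding
\[
\E_{\xi_n}\Big[\frac{\mu_{\xi_n}}{1 + \alpha_n \|\iota\nabla f_n(w^n)\|}\Big] \geq \mu - \alpha_n \E_{\xi_n}\big[\mu_{\xi_n}\|\iota\nabla f_n(w^n)\|\big].
\]
Applying the Lipschitz decomposition $\|\iota\nabla f_n(w^n)\| \leq L_{\xi_n}\|w^n - w^*\| + \|\iota\nabla f_n(w^*)\|$ and Cauchy--Schwarz with the second-moment data $\mu_2$, $L$, $\sigma$ from Assumption~\ref{ass:fHigherIntegrability} reduces this to $\mu - \alpha_n(\mu_2 L\|w^n-w^*\| + \mu_2\sigma)$. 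The remaining three terms in Lemma~\ref{lem:boundForTheo} are treated analogously: drop the positive denominators, apply the same Lipschitz splitting to $\|\iota\nabla f_n(w^n)\|$, and pair the resulting products by Cauchy--Schwarz. Taking the $\E_{n-1}$-expectation produces $\mathcal{O}(\alpha_n^2)$ remainders involving at most $\E_{n-1}\|w^n - w^*\|^3$, which I control via H\"older's inequality and Lemma~\ref{lem:apriori4} by $\E\|w^n - w^*\|^3 \leq M_4^{3/4}$, together with $\E\|w^n - w^*\|^2 \leq M_2$ and $\E\|w^n - w^*\|\leq M_2^{1/2}$. Collecting all constants reproduces precisely the $K$ in the theorem, while the hypothesis $\vartheta \leq (1+\gamma)/(2\mu)$ ensures $1 - 2\mu\alpha_n \geq 0$ for every $n \in \N$.

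Once the recursion is in place, I would telescope to
\[
a_{n+1} \leq \|w_1 - w^*\|^2\prod_{k=1}^{n}(1-2\mu\alpha_k) + \frac{K}{\vartheta^2}\sum_{i=1}^{n}\alpha_i^2\prod_{k=i+1}^{n}(1-2\mu\alpha_k),
\]
apply $1 - x \leq \exp{-x}$, and estimate $\sum_{k=i+1}^{n}1/(k+\gamma)\geq \ln\big((n+1+\gamma)/(i+1+\gamma)\big)$. The first product yields the initial-data term $\|w_1 - w^*\|^2(1+\gamma)^{2\vartheta\mu}(n+1+\gamma)^{-2\vartheta\mu}$; for the second, the off-by-one factor $(i+1+\gamma)/(i+\gamma) \leq \exp{1/(i+\gamma)} \leq \exp{1/(1+\gamma)}$ is absorbed into the multiplicative constant $\exp{2\vartheta\mu/(1+\gamma)}$, after which the contribution reduces to $(n+1+\gamma)^{-2\vartheta\mu}\sum_{i=1}^{n}(i+\gamma)^{2\vartheta\mu - 2}$. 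A standard integral comparison of this last sum then splits into the three cases of the theorem.

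The main obstacle will be the careful bookkeeping in the first step: each of the four terms of Lemma~\ref{lem:boundForTheo} has to be decomposed so that the leading factor multiplying $\|w^n-w^*\|^2$ combines cleanly to $1 - 2\mu\alpha_n$, while the $\mathcal{O}(\alpha_n^2)$ residual never involves a higher moment of $\|w^n-w^*\|$ than is controlled by $M_4$. The H\"older step producing $M_4^{3/4}$ is precisely what forces Assumption~\ref{ass:fHigherIntegrability} rather than only Assumption~\ref{ass:fStoch}, and the repeated Cauchy--Schwarz estimates must pair factors in exactly the right way so that $\mu_2$, $L$ and $\sigma$ appear in the combinations prescribed by $K$.
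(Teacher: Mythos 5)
Your proposal is correct and follows essentially the same route as the paper: apply Lemma~\ref{lem:boundForTheo} with $\Xi=0$, expand the contraction factor to first order (your $\tfrac{1}{1+x}\ge 1-x$ is exactly the paper's Lemma~\ref{lem:TaylorEx} applied twice), split $\|\iota\nabla f_n(w^n)\|$ via Lipschitz continuity, control the resulting cubic, quadratic and linear moments by $M_4^{3/4}$, $M_2$ and $M_2^{1/2}$, and telescope. Your final integral-comparison step is precisely the content of the paper's Lemma~\ref{lem:algebraic_inequalities} with $x=2\vartheta\mu$ and $y=\gamma$.
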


\begin{remark}
  By choosing $\vartheta \in (\frac{1}{2\mu}, \infty)$ we obtain the optimal convergence 
  rate. A value of $\vartheta$ much larger than $\frac{1}{2\mu}$ does not improve the overall 
  rate further, but does affect the exponent in the first term of the bound that involves the initial error. 
  We note that since $\frac{1 + \gamma}{2\mu} \geq \frac{1}{2\mu}$, it is possible to make 
  this choice for any $\gamma$. 
  We could in fact instead have analyzed the simpler step size sequence with $\alpha_n = 
  \frac{\vartheta}{n}$, but chose to present the results in this form in order to match our 
  other results and comparable results for e.g.\ SGD~\cite[Theorem 
  4.7]{BottouCurtisNocedal.2018}. The results for the simpler sequence are recovered by 
  simply setting $\gamma = 0$.
\end{remark}

\begin{proof}[Proof of Theorem~\ref{thm:ConvNorm1}]
  The main idea of the proof is to apply the bound from Lemma~\ref{lem:boundForTheo} 
  with $\Xi = 0$ and bound the denominators of the last three 
  summands from below by one. We then get
  \begin{align*}
    &\E_{\xi_n} \big[\| w^{n+1} - w^* \|^2\big]\\
    &\leq  \Big( 1 - \E_{\xi_n} \Big[ \frac{2 \alpha_n \mu_{\xi_n} }{1 + \alpha_n \|\iota \nabla 
      f_n(w^n)\| } \Big] \Big) \| w^n - w^*\|^2
    + 2 \alpha_n^2 L^2 \| w^n - w^*\|^2
    + 2 \alpha_n^2 \sigma^2 \\
    &\quad + 2 \alpha_n^2 \big(\E_{\xi_n} \big[ \|\iota \nabla f_n(w^n)\|^2 
    \big]\big)^{\frac{1}{2}} \sigma \| w^n - w^* \| \\
    &\leq \Big(1 - \E_{\xi_n} \Big[ \frac{2 \alpha_n \mu_{\xi_n} }{1 + \alpha_n L_{\xi_n}
    \|w^n - w^n\| + \alpha_n \|\iota \nabla f_n(w^*)\|} \Big] \Big) \| w^n - w^*\|^2\\
    &\quad + \big(2 \alpha_n^2 L^2 + 2 \alpha_n^2 L \sigma \big)\| w^n - w^*\|^2 
    + 2 \alpha_n^2 \sigma^2 \| w^n - w^* \| + 2 \alpha_n^2 \sigma^2 \\
    &\leq \Big(1 - \E_{\xi_n} \Big[ \frac{2 \alpha_n \mu_{\xi_n} }{1 + \alpha_n \|\iota 
      \nabla f_n(w^*)\|} + \frac{2 \alpha_n^2 L_{\xi_n} \mu_{\xi_n} }{(1 + \alpha_n \|\iota 
      \nabla f_n(w^*)\|)^2} \| w^n - w^*\| \Big]\Big) \| w^n - w^*\|^2\\
    &\quad + \alpha_n^2 \big(\big(2 L^2 + 2 L \sigma \big) \| w^n - w^*\|^2 
    + 2 \sigma^2 \| w^n - w^* \| + 2 \sigma^2\big) \\  
    &\leq \Big(1 - \E_{\xi_n} \Big[  \frac{2 \alpha_n \mu_{\xi_n} }{1 + \alpha_n \|\iota 
      \nabla f_n(w^*)\|} \Big]\Big) \| w^n - w^*\|^2\\
    &\quad +  \alpha_n^2 \big(2 L \mu_2 \| w^n - w^*\|^3 + \big(2 
    L^2 + 2 L \sigma \big) \| w^n - w^*\|^2
    + 2 \sigma^2 \| w^n - w^* \| + 2 \sigma^2 \big),
  \end{align*}
  where we added and subtracted $\iota \nabla f_n(w^*)$ and applied Minkowski's 
  inequality in the second step and Lemma~\ref{lem:TaylorEx} in the third.
  For the first summand of the previous inequality, we apply Lemma~\ref{lem:TaylorEx} 
  once more and find
  \begin{align*}
    & \Big(1 - \E_{\xi_n} \Big[ \frac{2 \alpha_n \mu_{\xi_n} }{1 + \alpha_n \|\iota 
      \nabla f_n(w^*)\|} \Big]\Big) \| w^n - w^*\|^2\\
    &= \Big(1 - \E_{\xi_n} \Big[ \frac{2 \vartheta \mu_{\xi_n} }{n + \gamma + \vartheta \|\iota 
    \nabla f_n(w^*)\|} \Big]\Big) \| w^n - w^*\|^2\\
    &\leq \Big(1 - \E_{\xi_n} \Big[ \frac{2 \vartheta \mu_{\xi_n}}{n + \gamma}\Big]
    + \E_{\xi_n} \Big[ \frac{2 \vartheta \mu_{\xi_n}}{(n + \gamma)^2} \vartheta \|\iota \nabla 
    f_n(w^*)\| \Big]\Big)
    \| w^n - w^*\|^2\\
    &\leq \Big(1 - \frac{2 \vartheta \mu }{n + \gamma} \Big) \| w^n - w^*\|^2
    + 2 \alpha_n^2 \mu_2 \sigma \| w^n - w^*\|^2.
  \end{align*}
  Taking the $\E_{n-1}$-expectation and applying the a priori bounds from 
  Lemma~\ref{lem:apriori2} and Lemma~\ref{lem:apriori4}, 
  we find that
  \begin{align*}
    &\E_n \big[\| w^{n+1} - w^* \|^2\big]\\
    &\leq \Big(1 - \frac{2 \vartheta \mu }{n + \gamma} \Big) \E_{n-1} \big[ \| w^n - w^*\|^2\big]
    + 2 \alpha_n^2 L \mu_2 \E_{n-1} \big[\| w^n - w^*\|^3 \big]\\
    &\quad  + \big(2 \alpha_n^2 L^2 + 2 \alpha_n^2 
    L \sigma + 2 \alpha_n^2 \mu_2 \sigma \big) \E_{n-1}\big[\| w^n - 
    w^*\|^2\big]\\
    &\quad + 2 \alpha_n^2 \sigma^2 \E_{n-1}\big[ \| w^n - w^* \|\big] + 2 \alpha_n^2 
    \sigma^2\\
    &\leq \Big(1 - \frac{2 \vartheta \mu }{n + \gamma} \Big) \E_{n-1} \big[ \| w^n - w^*\|^2\big]
    + 2 \alpha_n^2 L \mu_2 M_4^{\frac{3}{4}} \\
    &\quad + \big(2 \alpha_n^2 L^2 + 2 \alpha_n^2 L \sigma + 2 \alpha_n^2 \mu_2 \sigma 
    \big) M_2
     + 2 \alpha_n^2 \sigma^2 M_2^{\frac{1}{2}} + 2 \alpha_n^2 \sigma^2\\
    &= \Big(1 - \frac{2 \vartheta \mu }{n + \gamma} \Big) \E_{n-1} \big[ \| w^n - w^*\|^2\big]
    + \frac{K}{(n + \gamma)^2}.
  \end{align*}
  Reinserting the inequality $n-1$ times yields
  \begin{align*}
    &\E_n \big[\| w^{n+1} - w^* \|^2\big]\\
    &\leq \| w_1 - w^*\|^2\prod_{i=1}^{n} \Big( 1 - \frac{2 \vartheta \mu}{\gamma + i} \Big)
    + K \sum_{i=1}^{n} \frac{1}{(\gamma+ i)^2} \prod_{j=i+1}^{n} \Big( 1 - \frac{2 \vartheta 
    \mu}{\gamma + j} \Big).
  \end{align*}
  Due to the assumption $\vartheta \in (0,\frac{1 + \gamma}{2 \mu }]$, we can apply 
  Lemma~\ref{lem:algebraic_inequalities} with $x=2 \vartheta \mu$ and 
  $y=\gamma$ and obtain the claimed error bound.
\end{proof}

In comparison to convergence results regarding SGD, e.g.\ 
\cite[Theorem~4.7]{BottouCurtisNocedal.2018}, the higher-moment bounds in 
Assumption~\ref{ass:fHigherIntegrability} 
are not necessary. These are in fact not needed to prove convergence of TSGD, as the following theorem demonstrates.
The drawback is that the contraction parameter is given by $1 - \frac{2 \alpha_n 
\mu_{\xi_n} }{1 + \alpha_n \|\iota \nabla f_n(w^n)\| }$, where we cannot verify that $\|\iota 
\nabla f_n(w^n)\|$ is bounded. Thus, it is not necessarily possible to prove the optimal 
rate of convergence in all cases. We note that the step size sequence involving 
$\gamma$ is important here, as it allows us to choose a large $\vartheta$ for which the 
parameter $C$ defined in the theorem below becomes as large as possible, leading to the 
best possible rate. A larger $\gamma$ also increases the error term arising from the initial 
error, but as argued in~\cite[p.~251]{BottouCurtisNocedal.2018} the influence of this term 
can be minimized by precomputing a better $w_1$ using e.g.\ TSGD with a constant step 
size. We note that the condition $C \le 1 + \gamma$ is mostly technical, will likely not be 
an issue in practice, and can always be satisfied by choosing $\gamma \ge 1$.

\begin{theorem}\label{thm:ConvNorm2}
  Let Assumption~\ref{ass:fStoch} be fulfilled.
  For $\alpha_n = \frac{\vartheta}{n + \gamma}$, $n \in \N$, with $\gamma \in [0,\infty)$, 
  $\vartheta \in (0,\infty)$ such that
  \begin{align*}
    C &= \E_{\xi} \Big[ \frac{2 \gamma \vartheta \mu_{\xi} 
    }{\gamma + \vartheta L_{\xi} \| w_1 - w^*\| + \gamma \vartheta L_{\xi} + \vartheta \|\iota 
      \nabla f(\xi,w^*)\|} \Big],\\
    K &= 2 \vartheta^2 \big(\big( L^2 + L \sigma \big) M_2 + \sigma^2 + \sigma^2 
    M_2^{\frac{1}{2}} \big)
  \end{align*}
  and $C \in (0,1 + \gamma]$ it follows that
  \begin{align*}
    &\E_n \big[\| w^{n+1} - w^* \|^2\big]\\
    &\leq \| w_1 - w^*\|^2 (1+\gamma)^C(n+1+\gamma)^{-C} \\
    &\quad + \exptextB{\frac{C}{1+\gamma}} K \begin{cases}
      (n+1+\gamma)^{-1} \frac{1}{C-1}, &C \in (1,\infty),\\
      (n+1+\gamma)^{-1} (1 + \ln{(n + \gamma)}), &C = 1,\\
      (n+1+\gamma)^{-C} \frac{(1+\gamma)^{C-2}(C-2-\gamma)}{C-1}, &C \in [0,1),
    \end{cases}
  \end{align*}
  for every $n \in \N$.
\end{theorem}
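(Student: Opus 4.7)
The plan is to start from Lemma~\ref{lem:boundForTheo} applied with $\Xi = 0$, which eliminates the final summand and leaves a contraction term plus three error terms. Unlike the proof of Theorem~\ref{thm:ConvNorm1}, where the denominators $1 + \alpha_n \|\iota \nabla f_n(w^n)\|$ were discarded and the resulting $\|\iota \nabla f_n(w^n)\|^2$ then required the fourth-moment a priori bound from Lemma~\ref{lem:apriori4}, here I would keep the denominator in the contraction term in order to obtain a lower bound of the form $C/(n+\gamma)$ without invoking Assumption~\ref{ass:fHigherIntegrability}.

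For the three remaining terms I would bound the denominator by $1$ in the two summands of the form $\alpha_n^2\|\cdot\|^2/(1+\alpha_n\|\iota\nabla f_n(w^n)\|)^2$, producing $2\alpha_n^2 L^2 \|w^n - w^*\|^2$ and $2\alpha_n^2 \sigma^2$ after taking the $\E_{\xi_n}$-expectation. For the cross term I would drop the $1$ in the denominator and apply Cauchy--Schwarz, combined with $\|\iota \nabla f_n(w^n)\| \leq L_{\xi_n}\|w^n - w^*\| + \|\iota \nabla f_n(w^*)\|$ and Minkowski's inequality, to obtain $2\alpha_n^2 L\sigma \|w^n - w^*\|^2 + 2\alpha_n^2 \sigma^2 \|w^n - w^*\|$. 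None of these estimates need higher-order moments.

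The critical step is the lower bound on the contraction factor. Using the same Lipschitz estimate together with the pathwise a priori bound $\|w^n - w^*\| \leq \|w_1 - w^*\| + n - 1$ from Lemma~\ref{lem:aprioriPath}, and substituting $\alpha_n = \vartheta/(n+\gamma)$, I would show that almost surely
\begin{equation*}
  \frac{2 \alpha_n \mu_{\xi_n}}{1 + \alpha_n \|\iota \nabla f_n(w^n)\|}
  \geq \frac{1}{n+\gamma} \cdot \frac{2\gamma \vartheta \mu_{\xi_n}}{\gamma(1 + \vartheta L_{\xi_n}) + \vartheta L_{\xi_n}\|w_1 - w^*\| + \vartheta \|\iota \nabla f(\xi_n, w^*)\|}.
\end{equation*}
The verification reduces, after cross multiplication, to a comparison of two affine expressions in $n$ whose positivity follows from $\gamma \geq 0$, $n \geq 1$, and the nonnegativity of the remaining $L_{\xi_n}$-terms. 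Taking the $\E_{\xi_n}$-expectation then yields a contraction factor of at least $C/(n+\gamma)$, with $C$ as defined in the statement.

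Combining these estimates, taking the $\E_{n-1}$-expectation, and using the a priori bound $M_2$ from Lemma~\ref{lem:apriori2} to absorb the residual factors $\E_{n-1}[\|w^n - w^*\|^2]$ and $\E_{n-1}[\|w^n - w^*\|]$, I obtain the scalar recurrence
\begin{equation*}
  \E_n \big[\|w^{n+1} - w^*\|^2\big] \leq \Big(1 - \frac{C}{n+\gamma}\Big) \E_{n-1}\big[\|w^n - w^*\|^2\big] + \frac{K}{(n+\gamma)^2}.
\end{equation*}
Unfolding $n-1$ times and invoking Lemma~\ref{lem:algebraic_inequalities} with $x = C$ and $y = \gamma$ (the technical hypothesis $C \leq 1+\gamma$ of the theorem guarantees that the lemma is applicable) yields the three-case bound in the statement. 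The main obstacle is the contraction lower bound: since $C$ must be independent of $n$, the $n$-growth from the pathwise a priori bound has to be absorbed cleanly into the outer factor $1/(n+\gamma)$, and getting the algebra to land on exactly the constant $C$ is slightly delicate.
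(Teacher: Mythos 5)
Your proposal is correct and follows essentially the same route as the paper's proof: apply Lemma~\ref{lem:boundForTheo} with $\Xi = 0$, bound the error terms via Lipschitz continuity, Cauchy--Schwarz and Minkowski so that only $M_2$ (not the fourth moment) is needed, control the contraction denominator through the pathwise bound of Lemma~\ref{lem:aprioriPath} to extract exactly the constant $C$, and finish with Lemma~\ref{lem:algebraic_inequalities} for $x = C$, $y = \gamma$. Two phrasings are slightly off but harmless, since your subsequent concrete estimates are the correct ones: with $\Xi=0$ the final summand of Lemma~\ref{lem:boundForTheo} is not eliminated but becomes the cross term you then treat, and for that term you bound the denominator from below by $1$ (rather than ``dropping the $1$'').
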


\begin{proof}
  As in the proof of Theorem~\ref{thm:ConvNorm1}, the main idea of the proof is to apply 
  the bound from Lemma~\ref{lem:boundForTheo} 
  with $\Xi = 0$, where we also bound the denominators of the last three 
  summands from below by one. We then get
  \begin{align*}
    &\E_{\xi_n} \big[\| w^{n+1} - w^* \|^2\big]\\
    &\leq \Big( 1 - \E_{\xi_n} \Big[ \frac{2 \alpha_n \mu_{\xi_n} }{1 + \alpha_n \|\iota \nabla 
      f_n(w^n)\| } \Big] \Big) \| w^n - w^*\|^2
    + 2 \alpha_n^2 L^2 \| w^n - w^*\|^2
    + 2 \alpha_n^2 \sigma^2 \\
    &\quad + 2 \alpha_n^2 \big(\E_{\xi_n} \big[ \|\iota \nabla f_n(w^n)\|^2 
    \big]\big)^{\frac{1}{2}} \sigma \| w^n - w^* \| \\
    &\leq \Big( 1 - \E_{\xi_n} \Big[ \frac{2 \alpha_n \mu_{\xi_n} }{1 + \alpha_n \|\iota \nabla 
      f_n(w^n)\| } \Big] \Big) \| w^n - w^*\|^2
    + 2 \alpha_n^2 (L^2 + L \sigma) \| w^n - w^*\|^2\\
    &\quad + 2 \alpha_n^2 \sigma^2 
    + 2 \alpha_n^2 \sigma^2 \| w^n - w^* \|\\
    &=: \Big( 1 - \E_{\xi_n} \Big[ \frac{2 \alpha_n \mu_{\xi_n} }{1 + \alpha_n \|\iota \nabla 
      f_n(w^n)\| } \Big] \Big) \| w^n - w^*\|^2 + \alpha_n^2 I,
  \end{align*}
  where we added and subtracted $\iota \nabla f_n(w^*)$ and applied Minkowski's 
  inequality in the second step.
  Using the a priori bound from 
  Lemma~\ref{lem:apriori2}, we find that
  \begin{align*}
    \alpha_n^2 \E_{n-1} [I]
    &= 2 \alpha_n^2 \big(\big(L^2 + L \sigma \big) \E_{n-1} \big[\| w^n - w^*\|^2\big]
    + \sigma^2 + \sigma^2 \E_{n-1} \big[\| w^n - w^* \|\big]\big) \\
    &\leq \frac{2 \vartheta^2}{(n + \gamma)^2} \big(\big( L^2 + L \sigma \big) M_2 + 
    \sigma^2 + \sigma^2 
    M_2^{\frac{1}{2}}\big) = \frac{K}{(n + \gamma)^2}.
  \end{align*}
  Applying the pathwise a priori bound from Lemma~\ref{lem:aprioriPath}, it follows that
  \begin{align*}
    \alpha_n \|\iota \nabla f_n(w^n)\| 
    &\leq \alpha_n L_{\xi_n} \| w^n - w^*\| + \alpha_n \|\iota \nabla f_n(w^*)\| \\
    &\leq \frac{\vartheta}{n + \gamma} L_{\xi_n} \| w_1 - w^*\| + \frac{\vartheta n}{\gamma + 
      n} L_{\xi_n} + \frac{\vartheta}{n + \gamma} \|\iota \nabla f_n(w^*)\| \\
    &\leq \frac{\vartheta}{\gamma} L_{\xi_n} \| w_1 - w^*\| + \vartheta
    L_{\xi_n} + \frac{\vartheta}{\gamma} \|\iota \nabla f_n(w^*)\|.
  \end{align*}
  Thus, we find 
  \begin{align*}
    &1 - \E_{\xi_n} \Big[\frac{2 \alpha_n \mu_{\xi_n}}{1 + \alpha_n \|\iota \nabla 
      f_n(w^n)\| }\Big] \\
    &\leq 1 - \frac{1}{n + \gamma} \cdot \E_{\xi} \Big[ \frac{2 \gamma \vartheta \mu_{\xi} 
    }{\gamma + \vartheta L_{\xi} \| w_1 - w^*\| + \gamma \vartheta L_{\xi} + \vartheta \|\iota 
      \nabla f(\xi,w^*)\|} \Big] 
    =  1 - \frac{C}{n + \gamma}.
  \end{align*}
  Altogether, this implies that
  \begin{align*}
    &\E_n \big[\| w^{n+1} - w^* \|^2\big]\\
    &\leq \| w_1 - w^*\|^2\prod_{i=1}^{n} \Big( 1 - \frac{C}{\gamma + i} \Big)
    + K \sum_{i=1}^{n} \frac{1}{(\gamma+ i)^2} \prod_{j=i+1}^{n} \Big( 1 - \frac{C}{\gamma + 
      j} \Big).
  \end{align*}
  is fulfilled. As $C \in (0,1 + \gamma]$ is fulfilled by assumption, the claim of the 
  theorem can be verified by an application of Lemma~\ref{lem:algebraic_inequalities} with 
  $x=C$ and $y=\gamma$.
\end{proof}

In the penultimate theorem, we prove a convergence result under the additional assumption that 
the gradient is bounded. Note that the error bound does not increase uncontrollably with 
growing $\gamma$ or $\vartheta$. The terms $\gamma$ and
$\vartheta B$ always appear with a positive exponent in one factor and with the same, but 
negative, exponent in another factor. This verifies that TSGD is very stable with respect 
to large initial step sizes.

\begin{theorem}\label{thm:ConvNorm3}
  Let Assumption~\ref{ass:fBounded}
  be fulfilled. 
  For $\alpha_n = \frac{\vartheta}{n + \gamma}$, $n \in \N$ with $\gamma \in [0,\infty)$, 
  $1 + \gamma \geq \vartheta \big(2 \mu -  B\big)$ and $K = \big(4 + 6D^2\big) B^2 + 2 
  \big(B^2 D + \sigma B\big) M_2^{\frac{1}{2}}$ 
  it follows that
  \begin{align*}
    &\E_n \big[\| w^{n+1} - w^* \|^2\big]\\
    &\leq \| w_1 - w^*\|^2 (1+\gamma + 
    \vartheta B)^{2 \vartheta \mu} (n+1+\gamma + \vartheta B)^{-2 \vartheta \mu} \\
    &\quad + \exptextB{\frac{2 \mu \vartheta}{1 + \gamma+ \vartheta B}} \vartheta^2 K 
    \times\\
    &\quad \begin{cases}
      (n+1+\gamma+\vartheta B)^{-1} \frac{1}{2 \vartheta \mu-1}, &2 \vartheta \mu \in 
      (1,\infty),\\
      (n+1+\gamma+\vartheta B)^{-1}\big(1 + \ln{(n+\gamma+\vartheta B)} \big), &2 
      \vartheta \mu = 1,\\
      (n+1+\gamma+\vartheta B)^{-2 \vartheta \mu} \frac{ (1 + \gamma + \vartheta B)^{2 
          \vartheta \mu -2} (2 \vartheta \mu - 2 - \gamma+\vartheta B )}{2 \vartheta 
          \mu - 1}, &2 \vartheta \mu \in [0,1)
    \end{cases}
  \end{align*}
  for every $n \in \N$.
\end{theorem}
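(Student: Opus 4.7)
The plan is to closely mirror the strategy of the proofs of Theorems~\ref{thm:ConvNorm1} and~\ref{thm:ConvNorm2}: start from the per-step bound of Lemma~\ref{lem:boundForTheo} with $\Xi=0$, extract a one-step contraction factor, bound the three noise terms by $\alpha_n^2$ times constants and $\alpha_n^2$ times $\|w^n-w^*\|$, take the outer conditional expectation $\E_{n-1}$ and invoke the a priori bounds $M_2$ and $M_2^{1/2}$ from Lemma~\ref{lem:apriori2}, and finally feed the resulting scalar recursion into Lemma~\ref{lem:algebraic_inequalities}. The only new ingredient compared to those theorems is the uniform bound $\|\iota\nabla f_n(w)\|\le B$ from Assumption~\ref{ass:fBounded}, and this is what produces the shift $\vartheta B$ in the final rate: since $\|\iota\nabla f_n(w^n)\|\le B$ almost surely, the contraction factor satisfies $\E_{\xi_n}\!\big[\tfrac{2\alpha_n\mu_{\xi_n}}{1+\alpha_n\|\iota\nabla f_n(w^n)\|}\big]\ge \tfrac{2\alpha_n\mu}{1+\alpha_n B}=\tfrac{2\vartheta\mu}{n+\gamma+\vartheta B}$ for $\alpha_n=\vartheta/(n+\gamma)$, with no Taylor expansion required (in contrast to Theorem~\ref{thm:ConvNorm1}). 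The step-size condition $1+\gamma\ge\vartheta(2\mu-B)$ is exactly the hypothesis $x\le 1+y$ of Lemma~\ref{lem:algebraic_inequalities} with $x=2\vartheta\mu$ and $y=\gamma+\vartheta B$.

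For the two quadratic source terms I would first use $\|\iota\nabla f_n(w^n)-\iota\nabla f_n(w^*)\|^2\le 2\|\iota\nabla f_n(w^n)\|^2+2\|\iota\nabla f_n(w^*)\|^2$ to reduce everything to numerators of the form $\|\iota\nabla f_n(w^n)\|^2$ or $\|\iota\nabla f_n(w^*)\|^2$. Numerators of the first type are trivially bounded by $\alpha_n^2 B^2$ using $\|\iota\nabla f_n(w^n)\|\le B$ and denominator $\ge 1$. Numerators of the second type would be treated by a case split on whether $\|\iota\nabla f_n(w^n)\|$ vanishes: on $\{\|\iota\nabla f_n(w^n)\|>0\}$ I would rewrite
\begin{equation*}
  \frac{\alpha_n^2\|\iota\nabla f_n(w^*)\|^2}{(1+\alpha_n\|\iota\nabla f_n(w^n)\|)^2}
  = \frac{\|\iota\nabla f_n(w^*)\|^2}{\|\iota\nabla f_n(w^n)\|^2}\cdot\frac{\alpha_n^2\|\iota\nabla f_n(w^n)\|^2}{(1+\alpha_n\|\iota\nabla f_n(w^n)\|)^2}
  \le \alpha_n^2 B^2\,\frac{\|\iota\nabla f_n(w^*)\|^2}{\|\iota\nabla f_n(w^n)\|^2},
\end{equation*}
so that Assumption~\ref{ass:fBounded} gives an $\E_{\xi_n}$-bound of $\alpha_n^2 B^2 D^2$; on the complementary event the denominator equals $1$ and the term is at most $\alpha_n^2 B^2$. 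Collecting the contributions with the prefactors of Lemma~\ref{lem:boundForTheo} produces the coefficient $(4+6D^2)B^2$ in $K$. The cross term, with $\Xi=0$ and the denominator bounded below by $1$, reduces to $\alpha_n^2\|\iota\nabla f_n(w^n)\|\,\|\iota\nabla f_n(w^*)\|\,\|w^n-w^*\|$. I would estimate its $\E_{\xi_n}$-expectation in two complementary ways and add them: using $\|\iota\nabla f_n(w^n)\|\le B$ and Cauchy--Schwarz on $\|\iota\nabla f_n(w^*)\|$ gives $\alpha_n^2 B\sigma\|w^n-w^*\|$, while on $\{\|\iota\nabla f_n(w^n)\|>0\}$ rewriting the product as $\|\iota\nabla f_n(w^n)\|^2\cdot\|\iota\nabla f_n(w^*)\|/\|\iota\nabla f_n(w^n)\|\le B^2\|\iota\nabla f_n(w^*)\|/\|\iota\nabla f_n(w^n)\|$ and applying Cauchy--Schwarz with Assumption~\ref{ass:fBounded} gives $\alpha_n^2 B^2 D\|w^n-w^*\|$. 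Taking $\E_{n-1}$ and using $\E_{n-1}[\|w^n-w^*\|]\le M_2^{1/2}$ from Lemma~\ref{lem:apriori2} reproduces the $2(B^2 D+\sigma B)M_2^{1/2}$ piece of $K$.

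Collecting everything would give the recursion
\begin{equation*}
  \E_n\big[\|w^{n+1}-w^*\|^2\big]\le\Big(1-\frac{2\vartheta\mu}{n+\gamma+\vartheta B}\Big)\E_{n-1}\big[\|w^n-w^*\|^2\big]+\frac{\vartheta^2 K}{(n+\gamma+\vartheta B)^2},
\end{equation*}
after which iteration $n-1$ times followed by an application of Lemma~\ref{lem:algebraic_inequalities} with $x=2\vartheta\mu$ and $y=\gamma+\vartheta B$ delivers the three-case bound in the statement. The main obstacle I expect is the detailed book-keeping required to recover the precise numerical constants $4+6D^2$ and $2(B^2 D+\sigma B)$ in $K$: the two different bounds used for each of the second source term and the cross term have to combine with the Lemma~\ref{lem:boundForTheo} prefactors to give exactly these coefficients, and the event split on $\{\|\iota\nabla f_n(w^n)\|=0\}$ has to be handled uniformly so as not to contribute any $\alpha_n$-free parasitic terms that would destroy the $\alpha_n^2 K/(n+\gamma+\vartheta B)^2$ structure of the source part of the recursion.
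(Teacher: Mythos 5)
Your overall strategy coincides with the paper's: Lemma~\ref{lem:boundForTheo}, the uniform bound $B$ for the contraction factor without any Taylor expansion, the $D$-trick for the $\|\iota\nabla f_n(w^*)\|$-terms, the a priori bound $M_2$, and Lemma~\ref{lem:algebraic_inequalities} with $x=2\vartheta\mu$, $y=\gamma+\vartheta B$. But there is one concrete deviation that prevents you from landing on the stated bound. The paper takes $\Xi=B$ (not $\Xi=0$) and, more importantly, it does \emph{not} bound the denominators $1+\alpha_n\|\iota\nabla f_n(w^n)\|$ below by one in the source terms. Instead it uses the monotonicity of $x\mapsto\frac{x}{1+x}$ together with $\|\iota\nabla f_n(w^n)\|\le B$ to replace every such denominator by $1+\alpha_n B$, so that each source term carries the factor $\frac{\alpha_n^2}{(1+\alpha_n B)^2}=\frac{\vartheta^2}{(n+\gamma+\vartheta B)^2}$. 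This is exactly what aligns the source term with the contraction factor $1-\frac{2\vartheta\mu}{n+\gamma+\vartheta B}$ and lets Lemma~\ref{lem:algebraic_inequalities}(ii) be applied verbatim with the single shift $y=\gamma+\vartheta B$. Your version produces a source term proportional to $(n+\gamma)^{-2}$ against a product with shift $\gamma+\vartheta B$; the lemma as stated requires the same $y$ in both places, and repairing the mismatch via $(n+\gamma)^{-2}\le\big(1+\frac{\vartheta B}{1+\gamma}\big)^2(n+\gamma+\vartheta B)^{-2}$ introduces an extra multiplicative constant, so the theorem's $K$ is not recovered exactly.

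Two smaller points. First, your treatment of the cross term recovers $2(B^2D+\sigma B)$ only by adding two alternative upper bounds for one and the same quantity; in the paper this sum arises structurally, because with $\Xi=B$ the numerator $|B-\|\iota\nabla f_n(w^n)\||\,\|\iota\nabla f_n(w^*)\|$ is split as $B\|\iota\nabla f_n(w^*)\|+\|\iota\nabla f_n(w^n)\|\,\|\iota\nabla f_n(w^*)\|$, the first piece handled with $D$ and the second with $\sigma$. Your route is valid (a sum of two upper bounds is an upper bound) but the agreement with the paper's constant is coincidental. Second, you are right to worry about the event $\{\|\iota\nabla f_n(w^n)\|=0\}$: in your accounting it contributes an additional $6\,\E_{\xi_n}\big[\chi_{\|\iota\nabla f_n(w^n)\|=0}\|\iota\nabla f_n(w^*)\|^2\big]\alpha_n^2$ on top of $(4+6D^2)B^2\alpha_n^2$, which is not covered by the constant $D$ of Assumption~\ref{ass:fBounded}; this would enlarge $K$ (it is still $\mathcal{O}(\alpha_n^2)$, so it does not destroy the rate). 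The paper's own proof silently divides by $\|\iota\nabla f_n(w^n)\|^2$ without isolating this event, so on this point your proposal is, if anything, more careful than the original argument.
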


\begin{proof}
  Again, we apply the bound from Lemma~\ref{lem:boundForTheo} but this time with $\Xi 
  = B$ to acquire
  \begin{align*}
    &\E_{\xi_n} \big[\| w^{n+1} - w^* \|^2\big]\\
    &\leq \Big( 1 - \E_{\xi_n} \Big[ \frac{2 \alpha_n \mu_{\xi_n}}{1 + \alpha_n \|\iota \nabla 
      f_n(w^n)\| } \Big] \Big) \| w^n - w^*\|^2\\
    &\quad + 4 \E_{\xi_n} \Big[ \frac{\alpha_n^2 \|\iota \nabla f_n(w^n) \|^2}{(1 + \alpha_n 
    \|\iota \nabla f_n(w^n)\| )^2} \Big]
      + 6 \E_{\xi_n} \Big[ \frac{\alpha_n^2 \|\iota \nabla f_n (w^*)\|^2 }{( 1 
      + \alpha_n \|\iota \nabla f_n(w^n)\| )^2} \Big]\\
    &\quad + 2\E_{\xi_n} \Big[ \frac{ \alpha_n^2 \big(B + \|\iota  \nabla f_n(w^n)\| \big) \|\iota 
    \nabla f_n(w^*)\| }
    {(1 + \alpha_n \|\iota \nabla f_n(w^n)\| ) (1 + \alpha_n B )} \Big] 
    \| w^n - w^* \| \\
    &\leq \Big( 1 - \E_{\xi_n} \Big[\frac{2 \alpha_n \mu_{\xi_n}}{1 + \alpha_n B }\Big] \Big) \| 
    w^n - w^*\|^2
    + \frac{4\alpha_n^2 B^2}{(1 + \alpha_n B)^2}\\
    &\quad + 6 \E_{\xi_n} \Big[ \frac{\alpha_n^2 \|\iota \nabla f_n (w^*)\|^2 }{( 1 
    + \alpha_n \|\iota \nabla f_n(w^n)\| )^2} \Big]\\
    &\quad + 2 \Big(\frac{ \alpha_n B}{1 + \alpha_n B} \E_{\xi_n} \Big[ \frac{ \alpha_n \|\iota 
    \nabla f_n(w^*)\|}{1 + \alpha_n \|\iota \nabla f_n(w^n)\| }\Big] 
    + \frac{ \alpha_n^2 \sigma B}{(1 + \alpha_n B)^2}\Big) \| w^n - w^* \|,
  \end{align*}
  where we used in the last step that the function $x\mapsto \frac{x}{1 + x}$ is monotonically increasing for $x \in [0,\infty)$.
  We also have
  \begin{align*}
    \E_{\xi_n} \Big[ \frac{\alpha_n^2 \|\iota \nabla f_n (w^*)\|^2 }{( 1 + \alpha_n \|\iota 
      \nabla f_n(w^n)\| )^2} \Big]
    &\leq \E_{\xi_n} \Big[ \frac{\|\iota \nabla f_n (w^*)\|^2 }{\|\iota \nabla f_n(w^n)\|^2} 
    \cdot \frac{\alpha_n^2 \|\iota \nabla f_n (w^n)\|^2 }{( 1 + \alpha_n \|\iota 
      \nabla f_n(w^n)\| )^2} \Big]\\
    &\leq \frac{\alpha_n^2 B^2 D^2 }{( 1 + \alpha_n B)^2},
  \end{align*}
  and analogously $\E_{\xi_n} \big[ \frac{\alpha_n \|\iota \nabla f_n (w^*)\| }{1 + \alpha_n 
  \|\iota \nabla f_n(w^n)\|} \big] \leq \frac{\alpha_n B D}{1 + \alpha_n B}$.
  It then follows that
  \begin{align*}
    \E_{\xi_n} \big[\| w^{n+1} - w^* \|^2\big]
    &\leq \Big( 1 - \frac{2 \alpha_n \mu}{1 + \alpha_n B } \Big) \| w^n - w^*\|^2
    + \frac{\alpha_n^2 B^2 \big(4 + 6D^2\big) }{(1 + \alpha_n B)^2}\\
    &\quad + \frac{ 2 \alpha_n^2 B^2 D + 2 \alpha_n^2 \sigma B}{(1 + \alpha_n B)^2} 
    \| w^n - w^* \|,
  \end{align*}
  and taking the $\E_{n-1}$-expectation, we find that
  \begin{align*}
    &\E_n \big[\| w^{n+1} - w^* \|^2\big]\\
    &\leq \Big( 1 - \frac{2 \alpha_n \mu}{1 + \alpha_n B } \Big) \E_{n-1} \big[ \| w^n - 
    w^*\|^2\big]
    + \frac{\alpha_n^2 B^2 \big(4 + 6D^2\big) }{(1 + \alpha_n B)^2}\\
    &\quad + \frac{ 2 \alpha_n^2 B^2 D + 2 \alpha_n^2 \sigma B}{(1 + \alpha_n B)^2} 
    \big(\E_{n-1} \big[ \| w^n - w^*\|^2 \big]\big)^{\frac{1}{2}}\\
    &\leq \Big( 1 - \frac{2 \alpha_n \mu}{1 + \alpha_n B } \Big) \E_{n-1} \big[ \| w^n - 
    w^*\|^2\big]
    + \frac{\alpha_n^2 B^2 \big(4 + 6D^2\big) }{(1 + \alpha_n B)^2}
    + \frac{ 2 \alpha_n^2 \big(B^2 D + \sigma B\big) M_2^{\frac{1}{2}}}{(1 + 
    \alpha_n B)^2} \\
    &= \Big( 1 - \frac{2 \vartheta \mu}{n + \gamma + \vartheta B } \Big) \E_{n-1} \big[ \| 
    w^n 
    - w^*\|^2\big] + \frac{\vartheta^2 K}{(n + \gamma + \vartheta B)^2}.
  \end{align*}
  Reinserting the bound $n-1$ times, it follows that
  \begin{align*}
    \E_n \big[\| w^{n+1} - w^* \|^2\big]
    &\leq \| w_1 - w^*\|^2\prod_{i=1}^{n} \Big(1 - \frac{2 \vartheta \mu}{i + \gamma + 
    \vartheta B }\Big)\\
    &\quad + \vartheta^2 K \sum_{i=1}^{n} \frac{1}{(i + \gamma + \vartheta B)^2} 
    \prod_{j=i+1}^{n} \Big(1 - \frac{2 \vartheta \mu}{j + \gamma + \vartheta B }\Big).
  \end{align*}
  Due to the restriction $n +\gamma \geq \vartheta \big(2 \mu -  B\big)$, we can now apply 
  Lemma~\ref{lem:algebraic_inequalities} with $x=2\vartheta\mu$ and 
  $y=\gamma + \vartheta B$ in order to finish the proof of the theorem.
\end{proof}

We note that convergence results in this area are often stated in the form $F(w^{n}) - 
F(w^*) \le \frac{C}{n}$. The above theorems are a slightly stronger version, in that it 
proves convergence of the iterates themselves. However, in our setting they are 
equivalent as the following theorem shows.
We note that it is possible to use an approach similar to the one above to directly prove 
the convergence of $\{F(w^n)\}_{n \in \N}$. The error constants thereby acquired are 
similar to those acquired from a combination of one of the theorems above and 
Theorem~\ref{thm:Fconv} below. However, the reverse 
approach of proving convergence of $\{w^n\}_{n \in \N}$ by using convergence of 
$\{F(w^n)\}_{n \in \N}$ results in an additional factor $\frac{2}{\mu}$ which is typically very 
large.

\begin{theorem}\label{thm:Fconv}
  Let Assumption~\ref{ass:fStoch} be fulfilled. Then $\big\{\E_n 
  \big[\|w^{n+1} - w^*\|^2\big]\big\}_{n \in \N}$ behaves asymptotically the same as $\big\{ 
  \E_n \big[F(w^{n+1}) \big] - F(w^*) \big\}_{n \in \N}$. More precisely, 
  \begin{align*}
    \E_n \big[F(w^{n+1}) \big] - F(w^*)
    &\leq \frac{L}{2}  \E_n \big[\|w^{n+1} - w^*\|^2\big], \quad \text{and}\\
    \E_n \big[\|w^{n+1} - w^*\|^2\big]
    &\leq \frac{2}{\mu} \E_n \big[F(w^{n+1}) \big] - F(w^*)
  \end{align*}
  are fulfilled for every $n \in \N$.
\end{theorem}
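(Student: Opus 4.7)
The plan is to derive both inequalities as immediate consequences of Lemma~\ref{lem:Fconsequences} combined with the first-order optimality condition $\nabla F(w^*) = 0$, which is guaranteed by Assumption~\ref{ass:fStoch}. Since both bounds are pathwise inequalities in $w^{n+1}$, taking the total expectation $\E_n$ at the end will yield the claimed statements.

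For the first inequality, I would apply the upper Taylor-type bound from Lemma~\ref{lem:Fconsequences}, namely
\begin{equation*}
  F(v) \leq F(w) + \dual{\iota \nabla F(w)}{v - w} + \frac{L}{2} \|v - w\|^2,
\end{equation*}
with $v = w^{n+1}$ and $w = w^*$. Because $\nabla F(w^*) = 0$, the inner product vanishes and one obtains the pathwise bound $F(w^{n+1}) - F(w^*) \leq \frac{L}{2}\|w^{n+1} - w^*\|^2$. Taking $\E_n$ on both sides and using linearity of the expectation yields the first claim.

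For the second inequality, I would use the strong convexity estimate from Lemma~\ref{lem:Fconsequences},
\begin{equation*}
  F(v) \geq F(w) + \dual{\iota \nabla F(w)}{v - w} + \frac{\mu}{2} \|v - w\|^2,
\end{equation*}
again with $v = w^{n+1}$ and $w = w^*$. The first-order condition once more eliminates the inner product term, giving $\frac{\mu}{2}\|w^{n+1} - w^*\|^2 \leq F(w^{n+1}) - F(w^*)$. Dividing by $\mu/2$ and taking $\E_n$-expectation produces the second claim.

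There is no real obstacle here: the result is essentially a bookkeeping corollary of the sandwich of $F$ between its quadratic upper and lower models at the minimizer. The only subtlety is to note that $w^{n+1}$ is a random variable, so we must take expectations only after establishing the pathwise (a.s.) inequalities, and to confirm that the additive constant $F(w^*)$ passes outside $\E_n$ without issue since it is deterministic.
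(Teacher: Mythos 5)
Your argument is correct and is essentially identical to the paper's own proof: both apply the two quadratic bounds from Lemma~\ref{lem:Fconsequences} with $v = w^{n+1}$, $w = w^*$, use $\nabla F(w^*) = 0$ to eliminate the inner product, and then take the $\E_n$-expectation. No issues.
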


\begin{proof}
  By applying Lemma~\ref{lem:Fconsequences}, with $v = w^{n+1}$ and $w = w^*$ we 
  find that 
  \begin{equation*}
    F(w^{n+1}) \leq F(w^*) + \dual{\iota \nabla F(w^*) }{ w^{n+1} - w^*} + \frac{L}{2} \|w^{n+1} 
    - w^*\|^2.
  \end{equation*}
  But since $\nabla F(w^*) = 0$, this directly implies
  \begin{align*}
    \E_n \big[F(w^{n+1}) \big] - F(w^*) \leq \frac{L}{2}  \E_n \big[\|w^{n+1} - w^*\|^2\big].
  \end{align*}
  As $F$ is strongly convex, it follows that
  \begin{align*}
    \E_n \big[F(w^{n+1})\big] - F(w^*) 
    &\geq \E_n \big[\dual{\iota \nabla F(w^*) }{ w^{n+1} - w^*}\big] + \frac{\mu}{2} \E_n 
    \big[\|w^{n+1} - w^*\|^2\big]\\
    &= \frac{\mu}{2} \E_n \big[ \|w^{n+1} - w^*\|^2\big],
  \end{align*}
  since $\nabla F(w^*) = 0$ which verifies the second inequality.
\end{proof}

\section{Numerical experiments} \label{section:experiments}

In this section, we illustrate our theoretical results and the advantages of the TSGD 
method by performing a few numerical experiments. We consider binary classification, 
which means 
that we have $N \in \N$ given data samples $x_i \in \R^d$ and corresponding labels $y_i 
\in \{0,1\}$, $i \in \{1,\dots,N\}$. Each $x_i$ belongs to one of two classes; to the first one 
if $y_i = 0$ and to the second if $y_i = 1$. The goal is to find a prediction function $h_w 
\colon \R^d \to \R$ such that $h_w(x_i) \approx y_i$ for every $i \in 
\{1,\dots,N\}$. The prediction function depends on the parameters $w \in \R^{n_w}$ 
and is of a specific, given type. 
Here, we consider two different types; the first is a support vector machine (SVM) where 
$h_w(x) = \dual{\hat{w}}{x} + b$ for $w = (\hat{w},b)$. This is an affine classifier, which fits 
into our analysis. The second type is a general fully connected neural 
network~\cite{HighamHigham.2019} where $h_w(x)$ depends nonlinearly on the 
parameters $w$. This type of classifier does not fit directly into our analysis, but the 
TSGD method still performs well.

To measure the performance of the classifier we use the log loss function $\ell \colon 
\R^2 \to \R$ given by $\ell(h_w(x), y) = \ln(1 + \exptext{-h_w(x)y})$. We also add a 
regularization term $\frac{\lambda}{2} \|w\|^2$ with $\lambda \in (0,\infty)$, which makes 
the problem strongly convex in the SVM case. The overall problem is then to minimize 
the empirical risk $F(w)$, where
\begin{align*}
  F(w) = \frac{1}{N} \sum_{i = 1}^{N} \ell(h_w(x_i), y_i) +  \frac{\lambda}{2} \|w\|^2.
\end{align*}

We choose two different data sets from the LIBSVM collection%
\footnote{Hosted at \url{https://www.csie.ntu.edu.tw/~cjlin/libsvmtools/datasets/}.},
namely the \textit{mushroom} data set (originally from the {UCI} Machine Learning 
Repository~\cite{DuaGraff.2019}\footnote{Available at 
\url{https://archive.ics.uci.edu/ml/datasets/mushroom}.})
and the \textit{rcv1.binary} data set~\cite{RCV1.2004}\footnote{Available at \url{https://www.csie.ntu.edu.tw/~cjlin/libsvmtools/datasets/binary.html\#rcv1.binary}.}.
The former has $N = 8124$ samples with $d = 112$ features while the latter 
contains $N = 20242$ samples with $d = 47236$ features each.

The regularization parameter $\lambda$ corresponds to the convexity parameter $\mu$ 
from Assumption~\ref{ass:fStoch}. The other parameters $L$ and $\sigma$ 
appearing in our theory are more difficult to state explicitly. As our theoretical
results show that the choice of TSGD step size does not depend on these parameters,
this is not an issue for TSGD.
In comparison, for the step size sequence $\{\alpha_n\}_{n \in \N}$ with 
$\alpha_n = \frac{\vartheta}{n + \gamma}$, the optimal choice for the parameter 
$\gamma$ for SGD can depend on $\frac{1}{L}$, 
see~\cite[Theorem~4.7]{BottouCurtisNocedal.2018}.
This makes it more difficult to find a suitable initial step size.

We have implemented both the SGD and TSGD methods in Python. Due to the low 
complexity of the methods, this is fairly straightforward, and the main issue is how to 
compute the gradients $\nabla f_n(w^n)$, $n \in \N$. In the SVM case, this is also 
straightforward, and we can directly write down a closed-form expression that depends on 
the data $x_i$, $i \in \{1,\dots,N\}$. In the neural network case, we rely on the scikit-learn 
library~\cite{Scikit-learn} and its backpropagation implementation. In both cases, we 
deviate slightly from the presented analysis in that we do not choose the batches 
completely randomly. Instead, we follow the conventional procedure of splitting the data 
set into a number of batches and picking from these without replacement. When there are 
none left, the data is reshuffled and new batches are created. One such sequence is 
referred to as an epoch. 

In the examples, we compare the TSGD method with the classical SGD method. We plot 
the errors $\E_n\big[F(w^{n+1})\big] - F(w^*)$, which according to 
Theorem~\ref{thm:Fconv} behave similarly to the errors $\|w^{n+1}-w^*\|^2$ for the 
number of steps $n \in \N$. As we only prove the convergence in 
expectation, we use $100$ sample paths and plot the average error for them.
As the mushroom data set is comparably small, we can compute a reference solution 
$F(w^*)$ by using the nonlinear equation solver provided by the package 
\texttt{scipy.optimize}~\cite{SciPy} in the SVM setting. This enables us to show the exact 
values $F(w^n) - F(w^*)$. In all the other examples, 
we compute a reference solution $F(w^*)$ by simply running the TSGD scheme for more 
steps and choosing the lowest value obtained during all steps. The $F(w^*)$ thereby 
acquired is not the exact minimum but a very good approximation thereof. 
We choose TSGD to obtain the reference solution $F(w^*)$ as smaller 
values $F(w^n)$, $n \in \N$, are obtained using this method and therefore the obtained 
value $F(w^*)$ is as small as possible.

In the following two sub-sections we further describe parameter choices and the results of the different settings.

\subsection{Support vector machine}
We used a batch size of $1 \%$ of the amount of samples for both data sets. 
The regularization parameter was chosen as $\lambda = 10^{-5}$.
We ran the example for $10$ epochs but only stored every tenth value $F(w^n)$ in order 
to save computational costs.
For the step size $\alpha_n = \frac{\vartheta}{n + \gamma}$ we chose $\vartheta = 2 \cdot 
10^5 = 2 \lambda^{-1}$ in order to ensure the optimal speed of convergence of TSGD 
from Theorems~\ref{thm:ConvNorm1} and \ref{thm:ConvNorm3} and to fit the 
restriction for SGD from \cite[Theorem~4.7]{BottouCurtisNocedal.2018}.
Further, we varied $\gamma = 10^m$ for $m \in \{0, \dots, 6\}$ to investigate how larger 
initial step sizes effect the errors. Note that in 
\cite[Theorem~4.7]{BottouCurtisNocedal.2018} there is also a lower bound for $\gamma$. 
This restriction cannot be stated easily as it depends for example on the Lipschitz 
constant of $\nabla f_n$. The optimal rate can be observed for $\gamma$ large enough in 
the SVM examples.

In Figure~\ref{fig:lossSVM_mush}, we observe very well how larger initial step sizes 
change the outcome. For the TSGD method, we see how the error decreases while 
decreasing $\gamma$ within $\{10^3, 10^4, 10^5, 10^6\}$ and thereby increasing the 
initial step size. 
When $\gamma$ is chosen within $\{1, 10, 10^2, 10^3\}$ the error stops decreasing but 
remains within the same ballpark. 
This behavior cannot be observed for SGD. While increasing the initial step size 
has a positive effect for $\gamma$ between $\{10^4, 10^5, 10^6\}$, it leads to large errors 
within the first few steps for $\gamma = 10^{m}$ for 
$m \in \{0,1,2,3\}$ that can no longer be compensated for at later points. We note that we 
observe a faster asymptotic convergence for TSGD than suggested by our bounds 
(although we acknowledge that choosing a representative reference curve is a non-trivial 
task, given the number of different results in the plots).
A possible explanation could be that the error in 
Theorems~\ref{thm:ConvNorm1}, \ref{thm:ConvNorm2} and 
\ref{thm:ConvNorm3}  consists of two parts where the exponent in the second 
summand cannot be smaller than $-1$. 
In our case it could be the first error part that is dominating the total error. Here, the error 
can decrease faster than $\sim n^{-1}$ for large $\vartheta$.
The second part of the error corresponds to the question how well the operator $T_{\alpha 
f_n, z}(w)= w - \frac{\alpha \iota \nabla f_n(w)}{1 + \alpha \|\iota \nabla f_n(z)\|}$ preserves 
the optimum $w^*$. In the deterministic case, i.e.~ $f_n = F$, it follows that $T_{\alpha F, 
z}(w^*) = w^*$. Thus, the stochastic approximation of $F$ by $f_n$ could be 
better than expected in our examples.

In Figure~\ref{fig:lossSVM_cat}, we observe similar results for the second data set. 

\begin{figure}
  \includegraphics[width=0.49\textwidth]{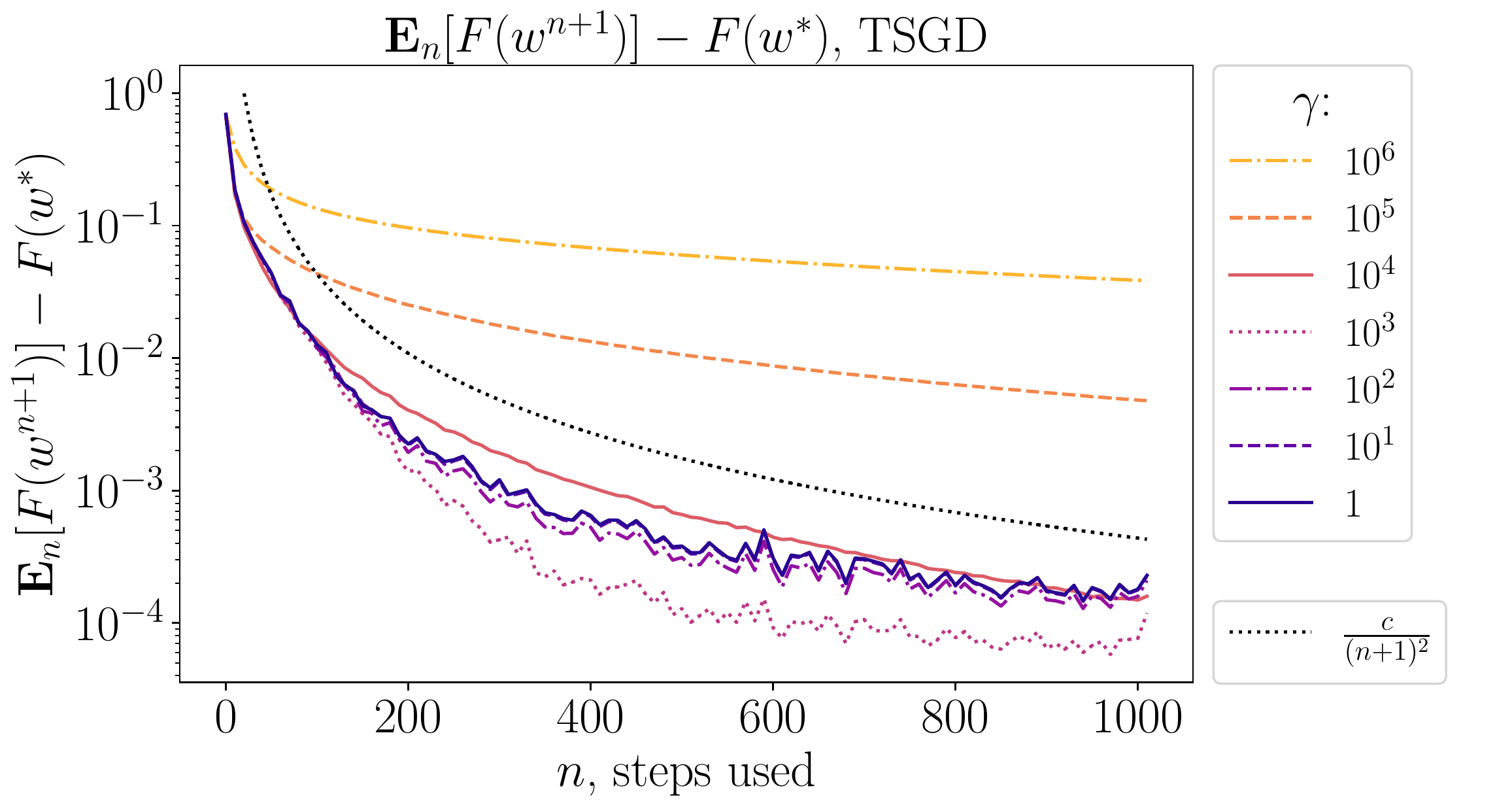}
  \includegraphics[width=0.49\textwidth]{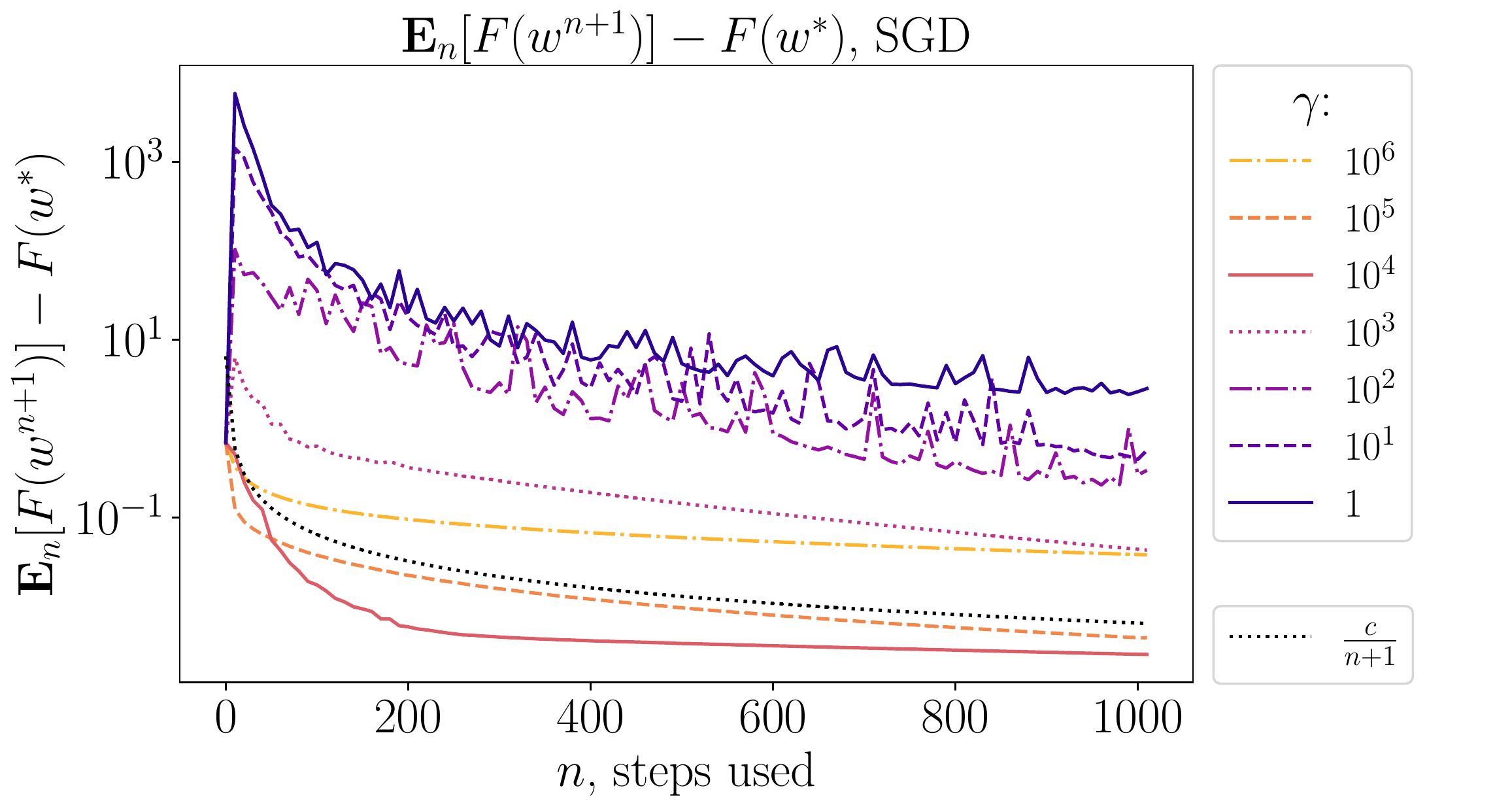}
  \caption{TSGD (left) versus SGD (right), different step sizes, SVM, 
    mushrooms}
  \label{fig:lossSVM_mush}
\end{figure}

\begin{figure}
  \includegraphics[width=0.49\textwidth]{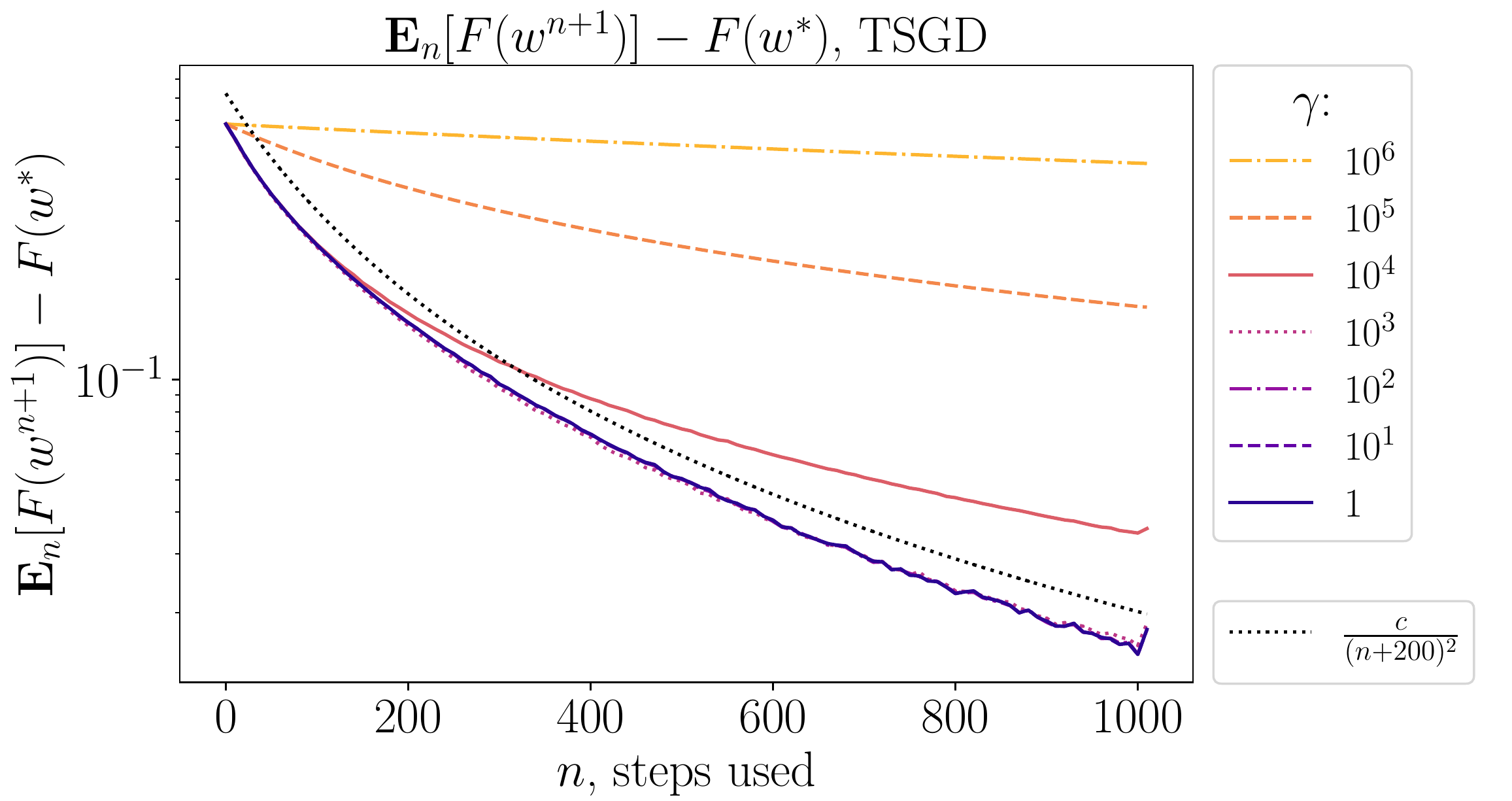}
  \includegraphics[width=0.49\textwidth]{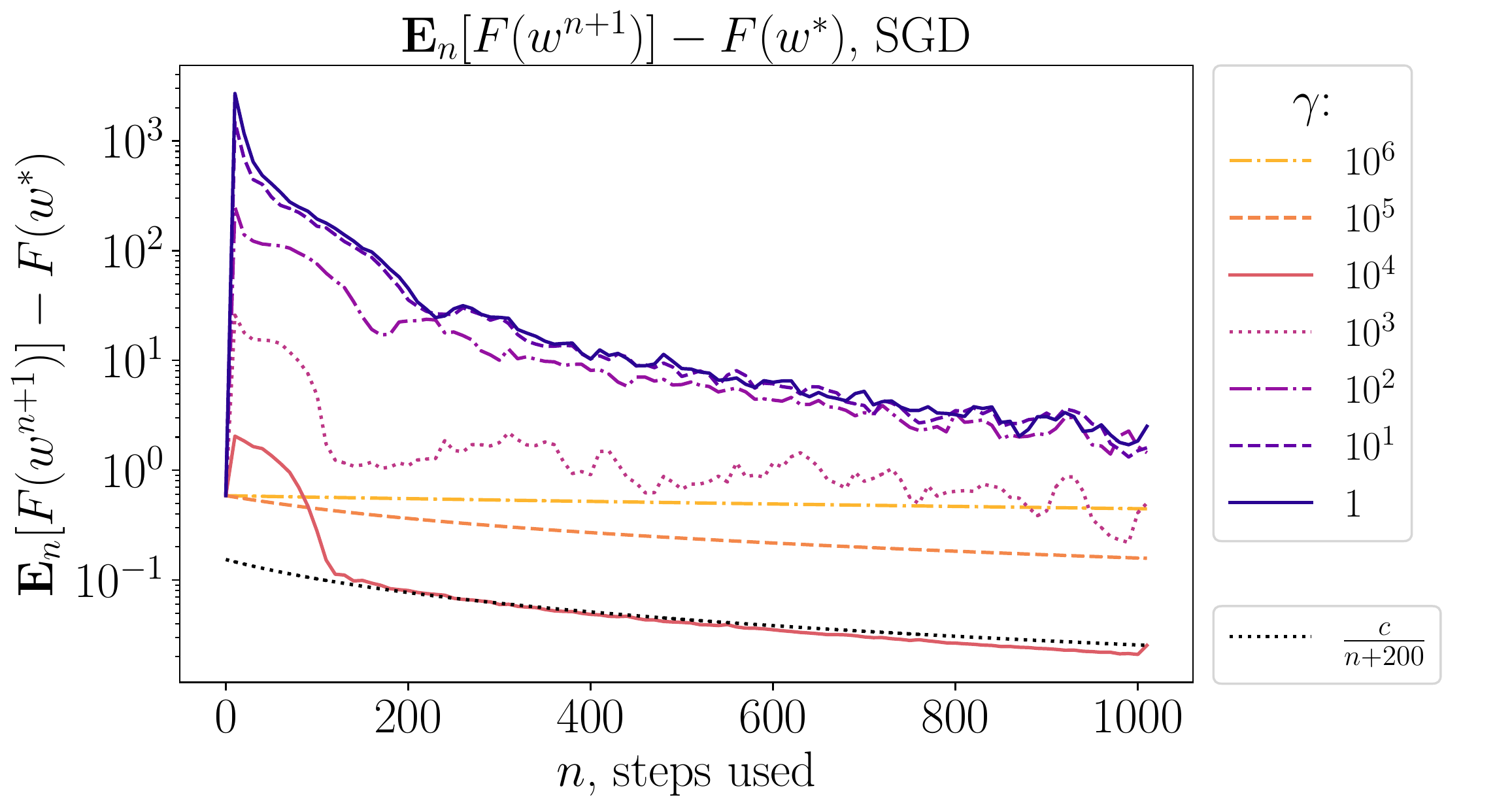}
  \caption{TSGD (left) versus SGD (right), different step sizes, SVM, 
    rcv1.binary}
  \label{fig:lossSVM_cat}
\end{figure}

\subsection{Neural network}

We used a fully connected neural network with one hidden layer containing $100$ 
neurons. 
The activation function was $f(x) = \max\{0,x\}$ on the hidden 
layers and $f(x) = 1 / (1 + \exptext{-x})$ on the output layer. The regularization 
parameter was again $\lambda = 10^{-5}$.
We allowed a maximum of $10$ epochs, and used a batch size which was $1\%$ of the 
amount of samples. We stored every tenth value $F(w^n)$ in order 
to save computational costs.
For the steps size sequence $\{\alpha_n\}_{n \in \N}$ with $\alpha_n = 
\frac{\vartheta}{n + \gamma}$, $n \in \N$, we chose $\vartheta = 10^5 = 
\lambda^{-1}$  and we varied $\gamma = 10^m$ for $m \in \{1, \dots, 7\}$. 

The positive effects of TSGD are showing even more clearly in this example. In 
Figure~\ref{fig:lossNN_mush}, we observe that for growing initial step sizes TSGD 
improves, while SGD becomes worse. 
For the second example in Figure~\ref{fig:lossNN_cat} we 
still observe that TSGD is much more stable than SGD even though the best result is 
achieved with $\gamma = 10^4$ and it becomes worse after. Compared to SGD, the 
speed of convergence is faster.

Altogether, we note that \emph{if} we choose the initial step size optimally, we do achieve 
the optimal rate also for SGD. This is, however, difficult to do in a real large-scale 
application, and the method is very sensitive to this choice. In contrast, TSGD performs 
similarly well for many different parameter choices, and is thus not sensitive at all. 
Further, in these examples, the TSGD decay is usually also faster than the best SGD 
decay. Using a different step size sequence for SGD which decreases faster initially and 
slower later might change this result, but it is unclear how to choose this optimally. 
Providing such an automatically tuned step size sequence is also, in fact, essentially 
what TSGD does.

\begin{figure}
  \includegraphics[width=0.49\textwidth]{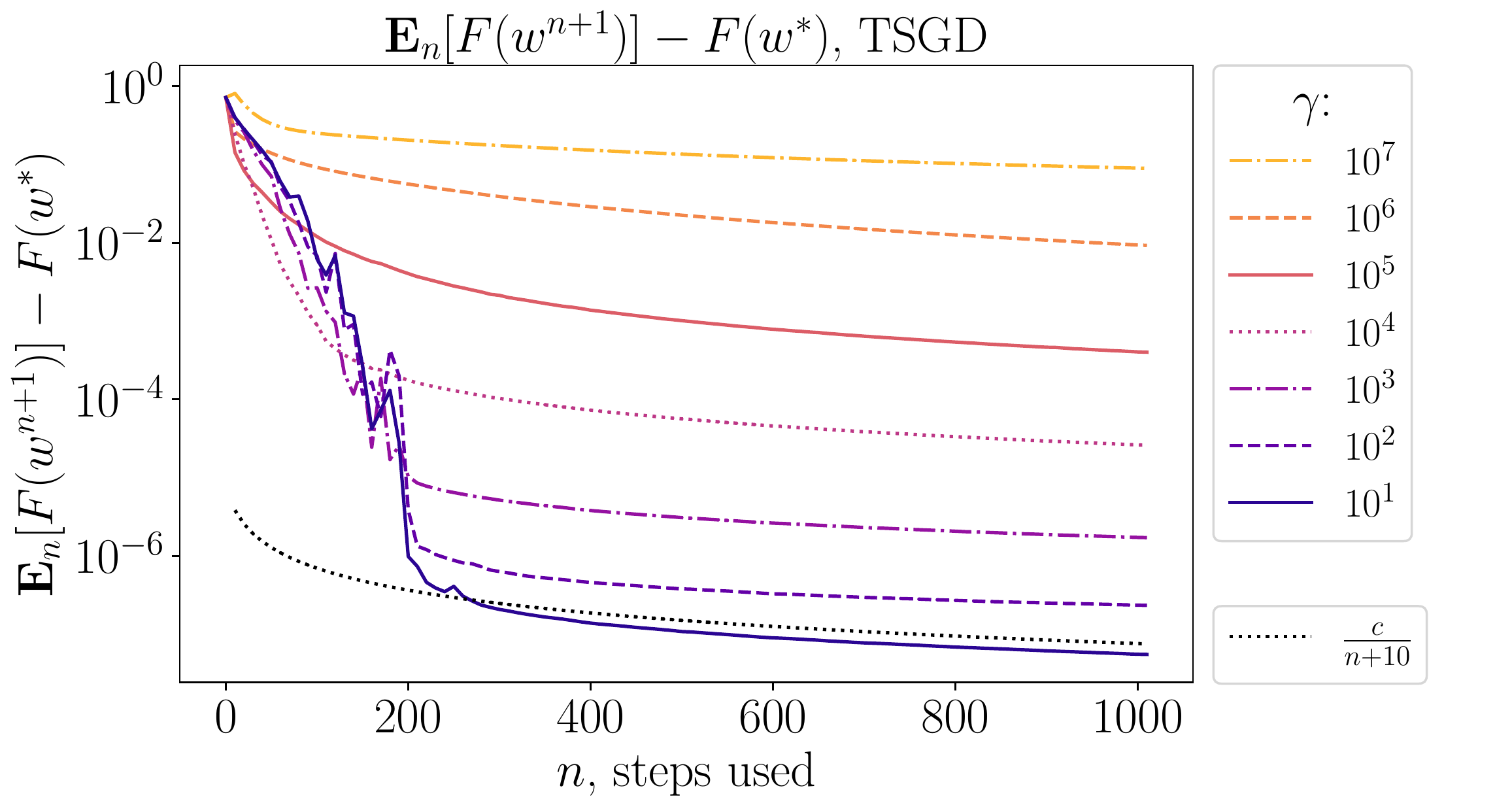}
  \includegraphics[width=0.49\textwidth]{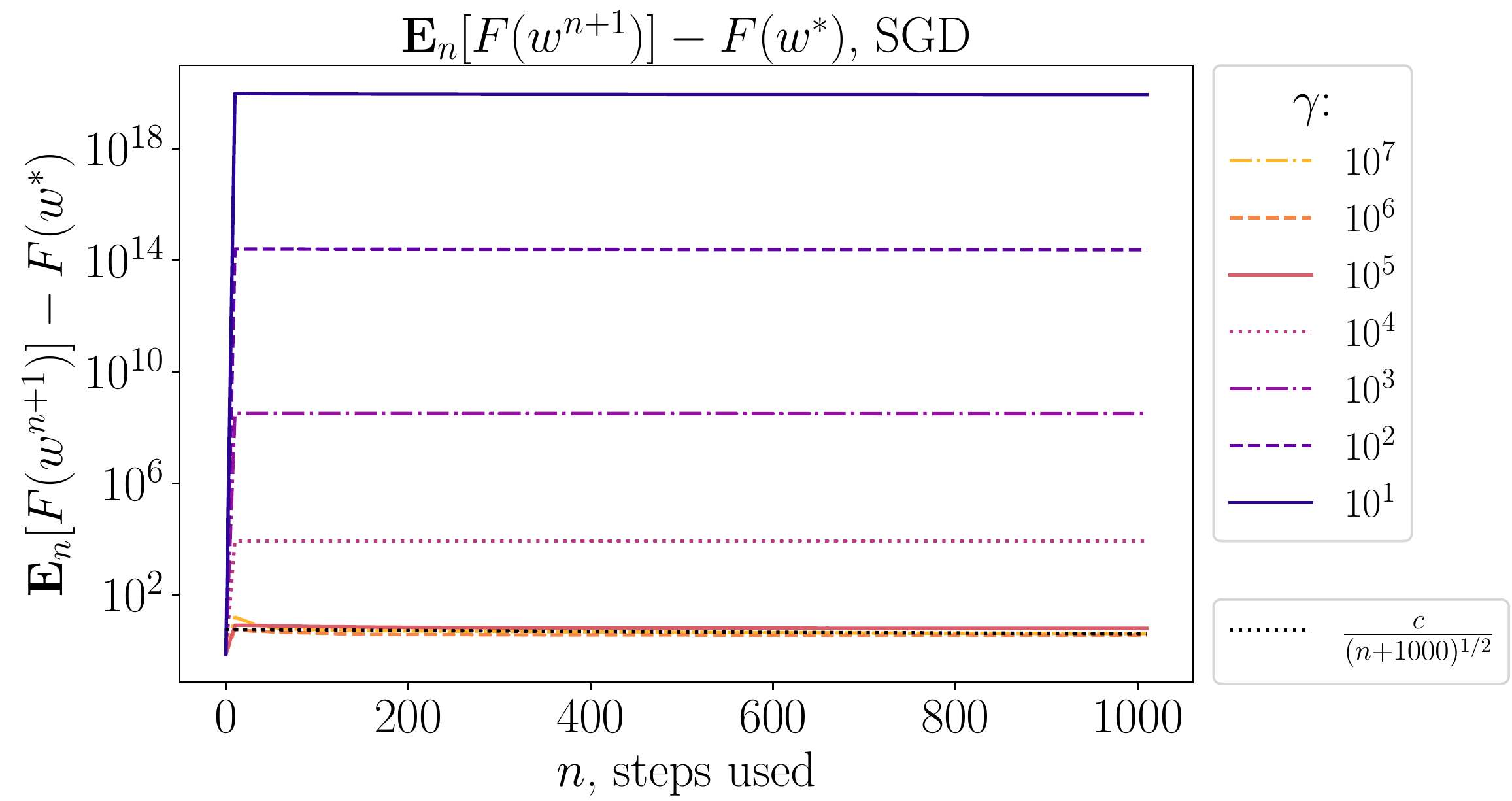}
  \caption{TSGD (left) versus SGD (right), different step sizes, neural 
  network, mushrooms}
  \label{fig:lossNN_mush}
\end{figure}

\begin{figure}
  \includegraphics[width=0.49\textwidth]{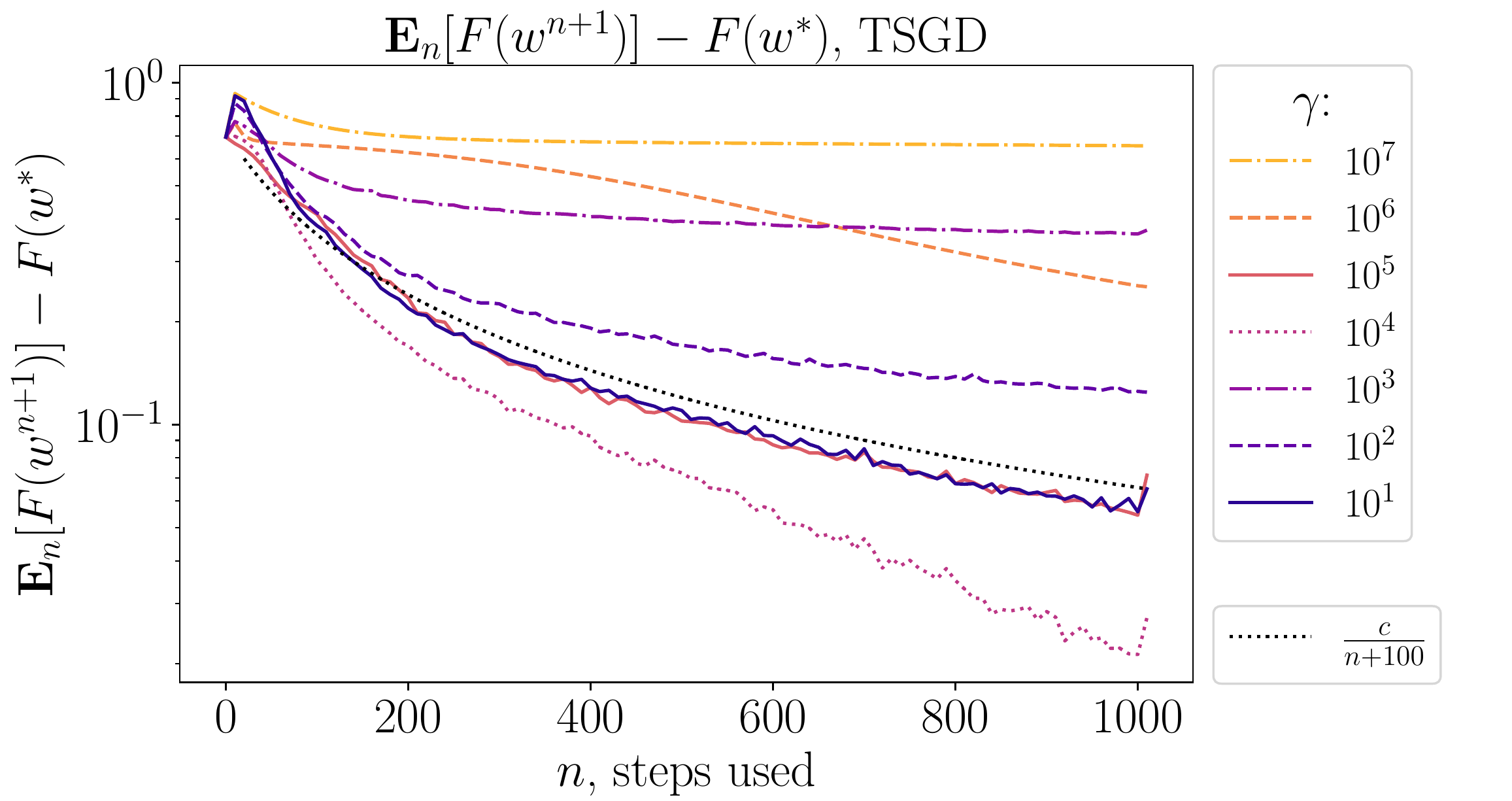}
  \includegraphics[width=0.49\textwidth]{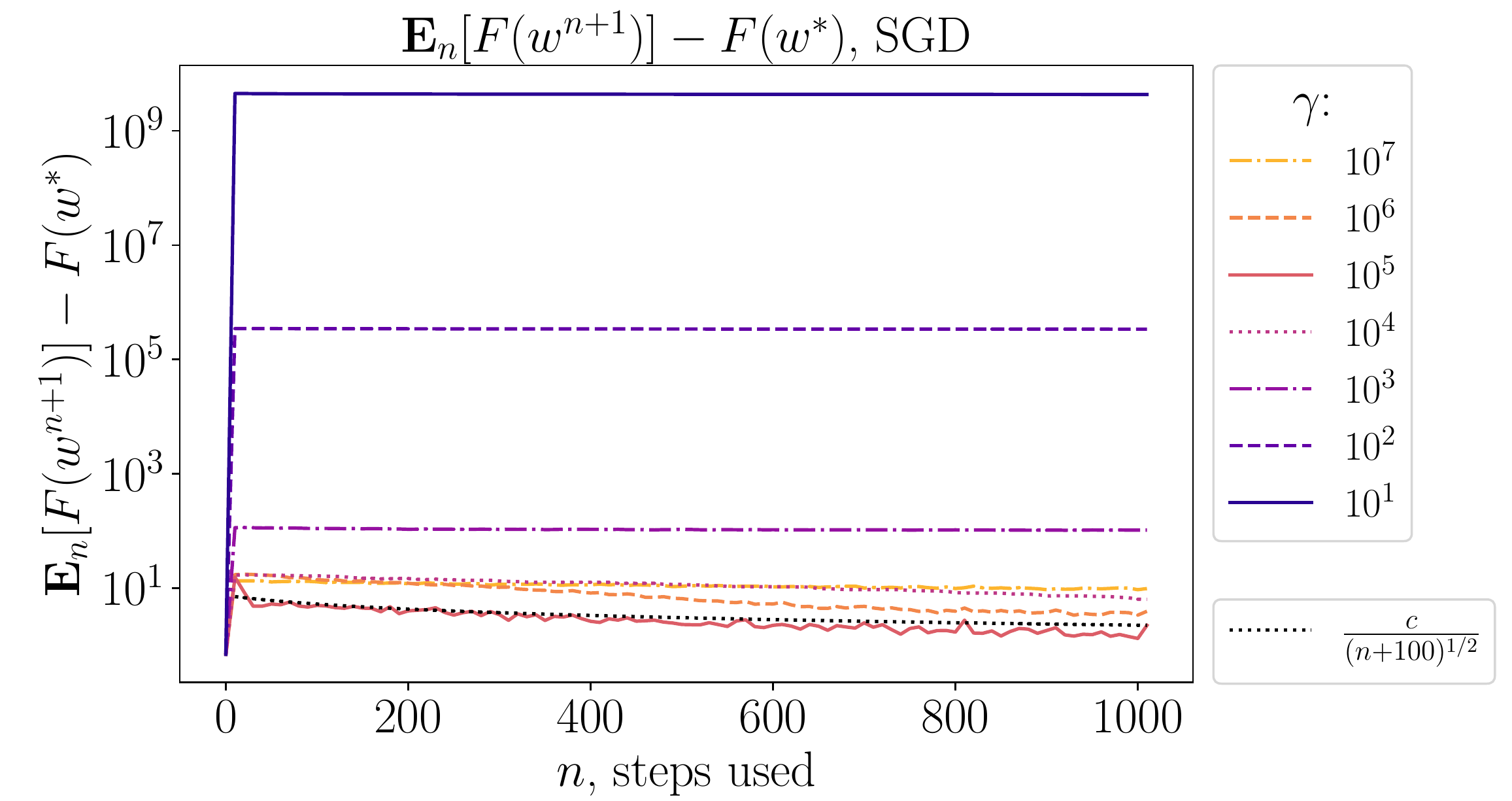}
  \caption{TSGD (left) versus SGD (right), different step sizes, neural 
  network, rcv1.binary}
  \label{fig:lossNN_cat}
\end{figure}

\FloatBarrier

\section{Conclusions}\label{section:conclusions}

We have introduced the TSGD method as an alternative to the well-known SGD method. 
While being comparably inexpensive, TSGD still offers better stability properties in 
comparison to this standard method. 
We have provided a general convergence analysis in an infinite dimensional framework for 
TSGD. While the 
infinite dimensional setting ensures that the error constants are independent of the 
underlying dimension of the problem, our analysis also shows that they are 
only mildly affected by large step sizes. This is in contrast to SGD, where large step 
sizes can lead to extremely large error constants. 
In practice, this means that larger step sizes can be used for TSGD which may lead to 
fast convergence results.
We have also observed that TSGD is much less sensitive to the choice of parameters, in that similar convergence behaviour is often achieved for very different initial step sizes.

The advantages of TSGD were demonstrated in a numerical experiment 
involving a classification problem. We applied both an affine classifier (SVM) and 
a nonlinear classifier (neural network). The affine setting fits into our theory and 
illustrated the theoretical results, while the good performance in the nonlinear framework suggested that there is a wider range 
of applications of the TSGD scheme than those covered by our assumptions.

\appendix
\section{Auxiliary results}\label{section:auxiliary}
This section contains three results that are required for our main theory, but which are 
more generally applicable. The first lemma provides the main algebraic inequalities which 
we base our convergence analysis on:

\begin{lemma} \label{lem:algebraic_inequalities}
  Let $x, y \in (0,\infty)$ and $n,m \in \N$ be given such that $\frac{x}{1 + y} \leq 1$. Then 
  the following inequalities 
  are satisfied:
  \begin{enumerate}[label=(\roman*)] 
    \item $\prod_{i= m}^n \big(1-\frac{x}{i + y}\big) \le 
    \big(\frac{n+1+ y}{m+y}\big)^{-x}$,\\
    \item 
    $\sum_{i=1}^n \frac{1}{(i+y)^2} \prod_{j =i+1}^n \big(1-\frac{x}{j+y} \big)\\
    \leq \exptextb{\frac{x}{1+y}}
    \begin{cases}
      (n+1+y)^{-1}\frac{1}{x-1}, &x \in (1,\infty),\\
      (n+1+y)^{-1} \big(1 + \ln{(n+y)}\big), &x = 1,\\
      (n+1+y)^{-x} \frac{(1+y)^{x-2}(x-2-y)}{x-1},
      &x \in [0,1).
    \end{cases}$
  \end{enumerate}
\end{lemma}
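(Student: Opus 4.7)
The plan is to establish part~(i) via a logarithm/integral comparison, then derive part~(ii) by invoking (i) on the product and applying standard integral bounds to the residual sum, case by case on~$x$.

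For part~(i) I would take logarithms, noting that the assumption $x/(1+y)\le 1$ together with $i\ge m\ge 1$ ensures $x/(i+y)\in[0,1]$, and use the elementary inequality $\ln(1-t)\le -t$ for $t\in[0,1)$. This reduces the claim to $\sum_{i=m}^{n}\tfrac{1}{i+y}\ge \ln\tfrac{n+1+y}{m+y}$, which follows by comparing the sum of the decreasing function $t\mapsto 1/(t+y)$ to the integral $\int_m^{n+1}\!\mathrm{d}t/(t+y)$. The boundary case $x/(1+y)=1$ makes the first factor vanish and is trivial.

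For part~(ii), I would first apply (i) with $m=i+1$ to bound the inner product by $\bigl(\tfrac{n+1+y}{i+1+y}\bigr)^{-x}$, pull out the factor $(n+1+y)^{-x}$, and write $(i+1+y)^x=(i+y)^x\bigl(1+\tfrac{1}{i+y}\bigr)^x$. The correction factor is maximised at $i=1$ and controlled by $1+t\le\mathrm{e}^{t}$, giving $\bigl(1+\tfrac{1}{i+y}\bigr)^x\le\exp\bigl(x/(1+y)\bigr)$, which explains the leading exponential in the claimed bound. It then remains to estimate $\sum_{i=1}^n(i+y)^{x-2}$ and combine with the prefactor $(n+1+y)^{-x}$.

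For that residual sum I would split into cases according to the monotonicity of $t\mapsto(t+y)^{x-2}$. If $x\in(1,2]$, the function is decreasing (or constant), so $(i+y)^{x-2}\le\int_{i-1}^{i}(t+y)^{x-2}\mathrm{d}t$ gives $\sum\le (n+y)^{x-1}/(x-1)$; if $x>2$, the function is increasing, so $(i+y)^{x-2}\le\int_{i}^{i+1}(t+y)^{x-2}\mathrm{d}t$ gives $\sum\le(n+1+y)^{x-1}/(x-1)$; in both sub-cases, multiplying by $(n+1+y)^{-x}$ and using $(n+y)^{x-1}\le(n+1+y)^{x-1}$ yields the stated $(n+1+y)^{-1}/(x-1)$ bound. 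If $x=1$, I isolate the $i=1$ term and integrate $\int_1^{n}\!\mathrm{d}t/(t+y)$ for the tail, producing the $1+\ln(n+y)$ logarithmic factor. Finally, if $x\in[0,1)$, the same separation yields $(1+y)^{x-2}+\bigl((n+y)^{x-1}-(1+y)^{x-1}\bigr)/(x-1)$; since $x-1<0$, dropping the negative $(n+y)^{x-1}$ term and simplifying gives exactly $(1+y)^{x-2}(2+y-x)/(1-x)=(1+y)^{x-2}(x-2-y)/(x-1)$, matching the third branch.

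The main obstacle is purely bookkeeping: matching the three explicit expressions in~(ii), particularly the unusual constant $(1+y)^{x-2}(x-2-y)/(x-1)$ in the third branch, requires carefully choosing the direction of the integral comparison (from $i-1$ to $i$ versus from $i$ to $i+1$) and the correct boundary treatment of the $i=1$ term. Once the algebra is aligned with the prescribed form, both parts follow from entirely standard real-analytic estimates.
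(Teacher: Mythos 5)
Your proposal is correct and follows essentially the same route as the paper's proof: part (i) via $1-t\le \exp{-t}$ combined with an integral comparison for $\sum_{i=m}^n (i+y)^{-1}$, and part (ii) by invoking (i), extracting the factor $\big(1+\tfrac{1}{i+y}\big)^x\le\exptextb{\frac{x}{1+y}}$, and bounding $\sum_{i=1}^n (i+y)^{x-2}$ by Riemann-sum comparison in the same three cases. Your case split of $x>1$ into $(1,2]$ and $(2,\infty)$ just makes explicit a step the paper absorbs into its generic harmonic-number bounds.
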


\begin{proof}
  In this proof, we apply the following basic inequalities involving (generalized) 
  harmonic numbers
  \begin{align*}
    \sum_{i=m}^n (i+y)^{-1} &\geq \ln{(n+1+y)} - \ln{(m+y)}, 
    \quad m \in \{1,\dots,n\},\\
    \sum_{i=1}^n{ (i+y)^{p}}
    &\leq 
    \begin{cases}
      \frac{(n+1+y)^{p+1}}{p+1}, &p \in [0,\infty) ,\\
      \frac{(n+y)^{p+1}}{p+1}, &p \in (-1,0),\\
      1 + \ln{(n+y)}, &p = -1,\\
      \frac{(1+y)^{p}(p-y)}{p+1}, &p \in (-\infty,-1),
    \end{cases}
  \end{align*}
  for $y \in (0,\infty)$.
  These inequalities follow by treating the sums as a lower or upper Riemann 
  sums approximating the integral $\int {(u+y)^p \diff{u}}$ over the intervals $[0,n]$, $[1,n]$ 
  or $[0,n+1]$. 
  
  Using the inequality $1 + u \le \exp{u}$ for $u \in [-1, \infty)$, it follows that $0\leq 
  1-\frac{x}{i + y} \le \exptext{-\frac{x}{i+y}}$ is fulfilled for every $i \in \N$ since 
  $\frac{x}{1+y} \leq 1$. It then follows that
  \begin{align*}
    \prod_{i= m}^n \Big(1- \frac{x }{i + y}\Big) 
    &\leq \exptextB{- x \sum_{i=m}^{n} (i + y)^{-1}} \\
    &\leq \exptextb{- x \big(\ln{(n+1+y)} - \ln(m+y)\big)} \\
    &= \exptextB{- x \ln{\Big(\frac{n+1+y}{m+y} \Big)} } 
    = \Big(\frac{n+1+y}{m+y} \Big)^{-x}
  \end{align*}
  from which the first claim follows directly. 
  For the second claim, we use the fact that $\frac{i+1+y}{i+y} = 1 + \frac{1}{i+y} \leq  1 + 
  \frac{1}{1+y}
  \leq \exptext{\frac{1}{1+y}}$ for all $i \in \N$ and find that
  \begin{align*}
    &\sum_{i=1}^n \frac{1}{(i+y)^2} \prod_{j = i+1}^n \Big(1-\frac{x}{j+y} 
    \Big)
    \le \sum_{i=1}^n { \frac{1}{(i+y)^2} \Big(\frac{n+1+y}{i+1+y}\Big)^{-x} } \\
    &\le (n+1+y)^{-x} \sum_{i=1}^n{ \Big(\frac{i+1+y}{i+y}\Big)^{x} (i+y)^{x-2} }\\
    &\le  \exptextB{\frac{x}{1+y}} (n+1+y)^{-x} 
    \sum_{i=1}^n{ (i+y)^{x-2} }\\
    &\leq \exptextB{\frac{x}{1+y}}
    \begin{cases}
      (n+1+y)^{-1}\frac{1}{x-1}, &x \in (1,\infty) ,\\
      (n+1+y)^{-1} \big(1 + \ln{(n+y)}\big), &x = 1,\\
      (n+1+y)^{-x}\frac{(1+y)^{x-2}(x-2-y)}{x-1}, 
      &x \in [0,1),
    \end{cases}
  \end{align*}
  where we applied the basic inequalities from the beginning of the proof.
\end{proof}

\begin{lemma} \label{lem:TaylorEx}
  Given $a,b \in (0,\infty)$, $\frac{-1}{a x + b} \leq - \frac{1}{b} + \frac{a}{b^2} 
  x$ for every $x \in (0,\infty)$. 
\end{lemma}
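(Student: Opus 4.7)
The plan is to verify the inequality by direct algebraic manipulation, which is the cleanest route for such a simple one-variable estimate.

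First I would rearrange the claim into the equivalent form
\begin{equation*}
  \frac{1}{b} - \frac{1}{ax+b} \leq \frac{a}{b^2}\,x,
\end{equation*}
and then combine the two fractions on the left over a common denominator to get
\begin{equation*}
  \frac{1}{b} - \frac{1}{ax+b} = \frac{(ax+b) - b}{b(ax+b)} = \frac{ax}{b(ax+b)}.
\end{equation*}
Since $a, b, x > 0$, both sides of the desired inequality are positive, and after dividing through by $ax > 0$ the claim reduces to $\frac{1}{b(ax+b)} \leq \frac{1}{b^2}$, which is equivalent to $b \leq ax+b$. This final inequality is immediate from $ax > 0$, which finishes the proof.

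An alternative viewpoint, which I would mention in a remark if space allowed, is that this is just the tangent-line bound for the concave function $x \mapsto -1/(ax+b)$ on $[0, \infty)$: its first-order Taylor polynomial at $x = 0$ is precisely $-1/b + (a/b^2)x$, and the second derivative $-2a^2/(ax+b)^3$ is negative, so the function lies below its tangent. There is no real obstacle here; the only thing to be a little careful about is tracking the signs when clearing denominators, since one multiplies by the positive quantity $b(ax+b)$.
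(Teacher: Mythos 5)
Your proof is correct, but your primary argument takes a different route from the paper. The paper proves the lemma exactly via the alternative you relegate to a remark: it writes $f(x) = \frac{-1}{ax+b}$, computes $f'(x) = \frac{a}{(ax+b)^2}$ and $f''(x) = \frac{-2a^2}{(ax+b)^3}$, and applies a first-order Taylor expansion with Lagrange remainder at $0$, observing that the remainder term $-\frac{a^2}{(a\xi+b)^3}x^2$ is nonpositive. Your main argument instead clears denominators directly: rearranging to $\frac{1}{b} - \frac{1}{ax+b} = \frac{ax}{b(ax+b)} \leq \frac{a}{b^2}x$ and reducing to the trivial $b \leq ax+b$. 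Your route is more elementary --- it needs no differentiability or remainder formula, only positivity of $a$, $b$, $x$ --- and it makes the sign bookkeeping completely explicit. The paper's route packages the same fact as the tangent-line bound for a concave function, which is slightly more conceptual and explains where the particular affine upper bound $-\frac{1}{b} + \frac{a}{b^2}x$ comes from (it is the tangent at $x=0$), but for a one-line estimate of this kind the two are essentially interchangeable. Both are complete proofs; there is no gap in yours.
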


\begin{proof}
  We consider the function $f \colon [0,\infty) \to \R$ with $f(x) = \frac{-1}{a x + b}$. 
  Then the first and second derivative of $f$ are given by $f'(x) = \frac{a}{(a x + b)^2}$ and 
  $f''(x) = \frac{-2a^2}{(a x + b)^3}$. Using a first-order Taylor expansion of $f$ then shows 
  that
  \begin{align*}
    f(x) = - \frac{1}{b} + \frac{a}{b^2} x - \frac{a^2}{(a \xi + b)^3} x^2
    \leq - \frac{1}{b} + \frac{a}{b^2} x,
  \end{align*}
  where $\xi \in (0,x)$. 
\end{proof}

The final lemma shows that $\nabla F$ does in fact exist and equals $\E_{\xi}\big[ 
\nabla f(\xi, \cdot)\big]$.

\begin{lemma} \label{lem:swap_diff_and_expectation}
  Let Assumption~\ref{ass:fStoch} be fulfilled.
  Then $F = \E_{\xi}[f(\xi, \cdot)]$ is G\^{a}teaux differentiable and its derivative is 
  given by
  \begin{align*}
    \dual{\iota \nabla F (v)}{w} = \E_{\xi}[\dual{\iota \nabla f (\xi, v)}{w}].
  \end{align*}
\end{lemma}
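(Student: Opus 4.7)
The plan is to compute the Gâteaux derivative of $F$ directly from the definition and swap the limit with the expectation via the Dominated Convergence Theorem. Fix $v, w \in H$ and consider the difference quotient
\begin{align*}
  \frac{F(v + hw) - F(v)}{h} = \E_{\xi}\Big[\frac{f(\xi, v + hw) - f(\xi, v)}{h}\Big].
\end{align*}
By Assumption~\ref{ass:fStoch}, the integrand converges a.s.\ to $\dual{\iota \nabla f(\xi, v)}{w}$ as $h \to 0$, so the problem reduces to producing an $h$-uniform integrable majorant and invoking DCT.

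For the majorant, I would use the one-variable mean value theorem applied to the real-valued function $t \mapsto f(\xi, v + tw)$, whose derivative at $t$ is precisely $\dual{\iota \nabla f(\xi, v + tw)}{w}$ by the Gâteaux differentiability of $f(\xi, \cdot)$. This yields $\theta_h \in (0,1)$ (depending on $\xi$ and $h$) with
\begin{align*}
  \frac{f(\xi, v + hw) - f(\xi, v)}{h} = \dual{\iota \nabla f(\xi, v + \theta_h h w)}{w}.
\end{align*}
Then Cauchy--Schwarz, the triangle inequality, and the Lipschitz condition around $w^*$ give, for $|h| \le 1$,
\begin{align*}
  \Big|\frac{f(\xi, v + hw) - f(\xi, v)}{h}\Big|
  \le \|w\|\big(\|\iota \nabla f(\xi, w^*)\| + L_\xi(\|v - w^*\| + \|w\|)\big).
\end{align*}
The right-hand side is integrable in $\xi$ because $\E_\xi[L_\xi] \le (\E_\xi[L_\xi^2])^{1/2} = L < \infty$ by Jensen/Cauchy--Schwarz and $\E_\xi[\|\iota \nabla f(\xi, w^*)\|] \le \sigma < \infty$ similarly.

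Having established an integrable dominant, DCT lets us pass the limit inside the expectation, which gives
\begin{align*}
  \lim_{h \to 0} \frac{F(v + hw) - F(v)}{h} = \E_\xi\big[\dual{\iota \nabla f(\xi, v)}{w}\big].
\end{align*}
The right-hand side is a bounded linear functional of $w$ (boundedness follows from the same Lipschitz estimate with $h = 0$), so it defines an element of $H^*$; applying $\iota$ yields $\nabla F(v)$ with the claimed identity. The only subtlety I foresee is ensuring measurability of $\xi \mapsto \theta_h$ and of the difference quotient, which follows from the a.s.\ Gâteaux differentiability together with continuity of $t \mapsto f(\xi, v + tw)$ (itself a consequence of Lipschitz continuity of the gradient); this is routine, so I do not expect any genuine obstacle.
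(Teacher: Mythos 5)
Your argument is correct, but it is worth noting that the paper does not actually prove this lemma: its ``proof'' is a single citation to \cite[Lemma~6]{RyuBoyd.2016}. Your mean-value-theorem--plus--dominated-convergence argument is the standard way to justify differentiating under the expectation, and it is a valid self-contained replacement that uses only the data in Assumption~\ref{ass:fStoch}: the a.s.\ Gâteaux differentiability gives pointwise convergence of the difference quotients, the MVT representation $\frac{f(\xi,v+hw)-f(\xi,v)}{h}=\dual{\iota\nabla f(\xi,v+\theta_h hw)}{w}$ combined with the Lipschitz bound anchored at $w^*$ gives the $h$-uniform majorant $\|w\|\bigl(\|\iota\nabla f(\xi,w^*)\|+L_\xi(\|v-w^*\|+\|w\|)\bigr)$, and its integrability follows from $\E_\xi[L_\xi]\le L$ and $\E_\xi[\|\iota\nabla f(\xi,w^*)\|]\le\sigma$ exactly as you say; the same estimate at $h=0$ gives boundedness of the limiting linear functional, so Riesz representation produces $\nabla F(v)$. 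Two small remarks. First, the measurability of $\xi\mapsto\theta_h$ that you flag is a non-issue: the dominated function is the difference quotient itself, which is measurable as a difference of measurable functions, and the dominating function does not involve $\theta_h$ at all, so you never need $\theta_h$ to be measurable. Second, applying the MVT along the whole segment requires differentiability of $t\mapsto f(\xi,v+tw)$ at uncountably many points simultaneously, so you are implicitly reading the assumption as ``there is a single null set off which $f(\xi,\cdot)$ is Gâteaux differentiable everywhere''; this is consistent with the paper's phrasing ``$f$ is Gâteaux differentiable a.s.''\ and with how the other a.s.\ conditions are used throughout, so it is a fair reading rather than a gap. Compared with the paper's approach, yours costs half a page but makes the lemma independent of an external reference.
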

\begin{proof}
  See e.g.~\cite[Lemma 6]{RyuBoyd.2016}.
\end{proof}

\bibliographystyle{siamplain}
\bibliography{ES2021.bib}

\begin{thebibliography}{10}

\bibitem{AbdulleMedovikov.2001}
{\sc A.~Abdulle and A.~Medovikov}, {\em Second order {C}hebyshev methods based
  on orthogonal polynomials}, Numerische Mathematik, 90 (2001), pp.~1--18.

\bibitem{Bertsekas.2011}
{\sc D.~Bertsekas}, {\em Incremental proximal methods for large scale convex
  optimization}, Math. Program., 129 (2011), pp.~163--195.

\bibitem{Bianchi.2016}
{\sc P.~Bianchi}, {\em Ergodic convergence of a stochastic proximal point
  algorithm}, SIAM J. Optim., 26 (2016), pp.~2235--2260.

\bibitem{BianchiHachem.2016}
{\sc P.~Bianchi and W.~Hachem}, {\em Dynamical behavior of a stochastic
  forward-backward algorithm using random monotone operators}, J. Optim. Theory
  Appl., 171 (2016), pp.~90--120.

\bibitem{BottouCurtisNocedal.2018}
{\sc L.~Bottou, F.~Curtis, and J.~Nocedal}, {\em Optimization methods for
  large-scale machine learning}, SIAM Rev., 60 (2018), pp.~223--311.

\bibitem{BrosseEtAll.2019}
{\sc N.~Brosse, A.~Durmus, E.~Moulines, and S.~Sabanis}, {\em The tamed
  unadjusted {L}angevin algorithm}, Stochastic Process. Appl., 129 (2019),
  pp.~3638--3663.

\bibitem{DavisDrusvyatskiy.2019}
{\sc D.~Davis and D.~Drusvyatskiy}, {\em Stochastic model-based minimization of
  weakly convex functions}, SIAM J. Optim., 29 (2019), pp.~207--239.

\bibitem{DuaGraff.2019}
{\sc D.~Dua and C.~Graff}, {\em {UCI} machine learning repository}, 2017,
  \url{http://archive.ics.uci.edu/ml}.

\bibitem{EftekhariEtal.2021}
{\sc A.~Eftekhari, B.~Vandereycken, G.~Vilmart, and K.~Zygalakis}, {\em
  Explicit stabilised gradient descent for faster strongly convex
  optimisation}, BIT Numerical Mathematics, 61 (2021), pp.~119--139.

\bibitem{ESW.2020}
{\sc M.~Eisenmann, T.~Stillfjord, and M.~Williamson}, {\em Sub-linear
  convergence of a stochastic proximal iteration method in {H}ilbert space},
  ArXiv Preprint, arXiv:2010.12348,  (2020).

\bibitem{HairerWanner.2010}
{\sc E.~Hairer and G.~Wanner}, {\em Solving ordinary differential equations.
  {II}}, vol.~14 of Springer Series in Computational Mathematics,
  Springer-Verlag, Berlin, 2010.
\newblock Stiff and differential-algebraic problems, Second revised edition,
  paperback.

\bibitem{HighamHigham.2019}
{\sc C.~Higham and D.~Higham}, {\em Deep learning: {A}n introduction for
  applied mathematicians}, SIAM Rev., 61 (2019), pp.~860--891.

\bibitem{HundsdorferVerwer.2003}
{\sc W.~Hundsdorfer and J.~Verwer}, {\em Numerical Solution of Time-Dependent
  Advection-Diffusion-Reaction Equations}, Springer, Berlin, Heidelberg, 2003.

\bibitem{HutzenthalerEtAll.2012}
{\sc M.~Hutzenthaler, A.~Jentzen, and P.~Kloeden}, {\em Strong convergence of
  an explicit numerical method for {SDE}s with nonglobally {L}ipschitz
  continuous coefficients}, Ann. Appl. Probab., 22 (2012), pp.~1611--1641.

\bibitem{HutzenthalerEtAll.2013}
{\sc M.~Hutzenthaler, A.~Jentzen, and P.~Kloeden}, {\em Divergence of the
  multilevel {M}onte {C}arlo {E}uler method for nonlinear stochastic
  differential equations}, Ann. Appl. Probab., 23 (2013), pp.~1913--1966.

\bibitem{KingmaBa.2014}
{\sc D.~{Kingma} and J.~{Ba}}, {\em Adam: A method for stochastic
  optimization}, arXiv e-prints,  (2014), arXiv:1412.6980, pp.~1--15.
\newblock Published as a conference paper at the 3rd International Conference
  for Learning Representations, San Diego, 2015.

\bibitem{RCV1.2004}
{\sc D.~Lewis, Y.~Yang, T.~Rose, and F.~Li}, {\em Rcv1: A new benchmark
  collection for text categorization research}, J.\ Mach.\ Learn.\ Res., 5
  (2004), p.~361–397.

\bibitem{LovasEtAll.2020}
{\sc A.~Lovas, I.~Lytras, M.~Rásonyi, and S.~Sabanis}, {\em Taming neural
  networks with {TUSLA}: {N}on-convex learning via adaptive stochastic gradient
  {L}angevin algorithms}, ArXiv Preprint, arXiv:2006.14514,  (2020).

\bibitem{PatrascuIrofti.2020}
{\sc A.~Patrascu and P.~Irofti}, {\em Stochastic proximal splitting algorithm
  for composite minimization}, ArXiv Preprint, arXiv:1912.02039v2,  (2020).

\bibitem{PatrascuNecoara.2017}
{\sc A.~Patrascu and I.~Necoara}, {\em Nonasymptotic convergence of stochastic
  proximal point methods for constrained convex optimization}, J. Mach. Learn.
  Res., 18 (2017), pp.~Paper No. 198, 42.

\bibitem{Scikit-learn}
{\sc F.~Pedregosa, G.~Varoquaux, A.~Gramfort, V.~Michel, B.~Thirion, O.~Grisel,
  M.~Blondel, P.~Prettenhofer, R.~Weiss, V.~Dubourg, J.~Vanderplas, A.~Passos,
  D.~Cournapeau, M.~Brucher, M.~Perrot, and {\'E}.~Duchesnay}, {\em
  Scikit-learn: Machine learning in {P}ython}, Journal of Machine Learning
  Research, 12 (2011), pp.~2825--2830.

\bibitem{RyuBoyd.2016}
{\sc E.~Ryu and S.~Boyd}, {\em Stochastic proximal iteration: A non-asymptotic
  improvement upon stochastic gradient descent},
  www.math.ucla.edu/eryu/papers/spi.pdf,  (2016).

\bibitem{RyuYin.2019}
{\sc E.~Ryu and W.~Yin}, {\em Proximal-proximal-gradient method}, J. Comput.
  Math., 37 (2019), pp.~778--812.

\bibitem{Sabanis.2013}
{\sc S.~Sabanis}, {\em A note on tamed {E}uler approximations}, Electron.
  Commun. Probab., 18 (2013), pp.~no. 47, 10.

\bibitem{Sabanis.2016}
{\sc S.~Sabanis}, {\em Euler approximations with varying coefficients: the case
  of superlinearly growing diffusion coefficients}, Ann. Appl. Probab., 26
  (2016), pp.~2083--2105.

\bibitem{SabanisZhang.2019}
{\sc S.~Sabanis and Y.~Zhang}, {\em Higher order {L}angevin {M}onte {C}arlo
  algorithm}, Electron. J. Stat., 13 (2019), pp.~3805--3850.

\bibitem{SalimBianchiHachem.2019}
{\sc A.~Salim, P.~Bianchi, and W.~Hachem}, {\em Snake: a stochastic proximal
  gradient algorithm for regularized problems over large graphs}, IEEE Trans.
  Automat. Control, 64 (2019), pp.~1832--1847.

\bibitem{ToulisAiroldi.2015}
{\sc P.~Toulis and E.~Airoldi}, {\em Scalable estimation strategies based on
  stochastic approximations: classical results and new insights}, Stat.
  Comput., 25 (2015), pp.~781--795.

\bibitem{ToulisAiroldi.2016}
{\sc P.~Toulis and E.~Airoldi}, {\em Asymptotic and finite-sample properties of
  estimators based on stochastic gradients}, Ann. Statist., 45 (2017),
  pp.~1694--1727.

\bibitem{ToulisRennieAiroldi.2014}
{\sc P.~Toulis, J.~Rennie, and E.~Airoldi}, {\em Statistical analysis of
  stochastic gradient methods for generalized linear models}, Proceedings of
  the 31st International Conference on Machine Learning,  (2014).

\bibitem{ToulisTranAiroldi.2016}
{\sc P.~Toulis, D.~Tran, and E.~Airoldi}, {\em Towards stability and optimality
  in stochastic gradient descent}, in Proceedings of the 19th International
  Conference on Artificial Intelligence and Statistics, A.~Gretton and C.~C.
  Robert, eds., vol.~51 of Proceedings of Machine Learning Research, Cadiz,
  Spain, 09--11 May 2016, PMLR, pp.~1290--1298.

\bibitem{TranToulisAiroldi.2015}
{\sc D.~Tran, P.~Toulis, and E.~Airoldi}, {\em Stochastic gradient descent
  methods for estimation with large data sets}, ArXiv Preprint,
  arXiv:1509.06459,  (2015).

\bibitem{vanDerHouwenSommeijer.1980}
{\sc P.~J. van Der~Houwen and B.~Sommeijer}, {\em On the internal stability of
  explicit, m-stage {Runge}-{Kutta} methods for large m-values}, ZAMM, 60
  (1980), pp.~479--485.

\bibitem{Verwer.1996}
{\sc J.~Verwer}, {\em Explicit {R}unge-{K}utta methods for parabolic partial
  differential equations}, Appl. Numer. Math., 22 (1996), pp.~359--379.
\newblock Special issue celebrating the centenary of Runge-Kutta methods.

\bibitem{SciPy}
{\sc P.~Virtanen, R.~Gommers, T.~Oliphant, M.~Haberland, T.~Reddy,
  D.~Cournapeau, E.~Burovski, P.~Peterson, W.~Weckesser, J.~Bright, S.~{van der
  Walt}, M.~Brett, J.~Wilson, K.~Millman, N.~Mayorov, A.~Nelson, E.~Jones,
  R.~Kern, E.~Larson, C.~Carey, {\.I}.~Polat, Y.~Feng, E.~Moore,
  J.~{VanderPlas}, D.~Laxalde, J.~Perktold, R.~Cimrman, I.~Henriksen,
  E.~Quintero, C.~Harris, A.~Archibald, A.~Ribeiro, F.~Pedregosa, P.~{van
  Mulbregt}, and {SciPy 1.0 Contributors}}, {\em {{SciPy} 1.0: Fundamental
  Algorithms for Scientific Computing in {P}ython}}, Nature Methods, 17 (2020),
  pp.~261--272.

\bibitem{ZeidlerIIB.1990}
{\sc E.~Zeidler}, {\em Nonlinear {F}unctional {A}nalysis and its
  {A}pplications. {II}/{B}}, Springer-Verlag, New York, 1990.
\newblock Nonlinear monotone operators, Translated from the German by the
  author and Leo F. Boron.

\end{thebibliography}

\end{document}